\newcolumntype{P}[1]{>{\centering\arraybackslash}p{#1}}
\tikzset{
  commutative diagrams/.cd, 
  arrow style=tikz, 
  diagrams={>=stealth}
}
\tikzset{
  arrow/.pic={\path[tips,every arrow/.try,->,>=#1] (0,0) -- +(0,4pt);},
  pics/arrow/.default={triangle 90}
}
\tikzset{->-/.style={decoration={
  markings,
  mark=at position .6 with {\arrow{latex}}},postaction={decorate}}
  }
\tikzset{
  c/.style={every coordinate/.try}
}
\theoremstyle{theorem}
\newenvironment{customthm}[1]
  {\innercustomthm}
  {\endinnercustomthm}
\theoremstyle{definition}
\theoremstyle{definition}
\def\@tocline#1#2#3#4#5#6#7{\relax
  \ifnum #1>\c@tocdepth 
  \else
    \par \addpenalty\@secpenalty\addvspace{#2}%
    \begingroup \hyphenpenalty\@M
    \@ifempty{#4}{%
      \@tempdima\csname r@tocindent\number#1\endcsname\relax
    }{%
      \@tempdima#4\relax
    }%
    \parindent\z@ \leftskip#3\relax \advance\leftskip\@tempdima\relax
    \rightskip\@pnumwidth plus4em \parfillskip-\@pnumwidth
    #5\leavevmode\hskip-\@tempdima
      \ifcase #1
       \or\or \hskip 1em \or \hskip 2em \else \hskip 3em \fi%
      #6\nobreak\relax
    \dotfill\hbox to\@pnumwidth{\@tocpagenum{#7}}\par
    \nobreak
    \endgroup
  \fi}
\newcounter{marginnote}
\DeclareMathAlphabet{\mathpzc}{OT1}{pzc}{m}{it}
\theoremstyle{theorem}
\newtheorem{theorem}{Theorem}[section]
\newtheorem{lemma}[theorem]{Lemma}
\newtheorem{proposition}[theorem]{Proposition}
\theoremstyle{definition}
\newtheorem{remark}[theorem]{Remark}
\newtheorem*{runningexample*}{Running example}
\newtheorem*{aside*}{Aside}
\newtheorem{construction}[theorem]{Construction}
\newtheorem{definition}[theorem]{Definition}
\newtheorem{example}[theorem]{Example}
\newtheorem{proposition-definition}[theorem]{Proposition-Definition}
\newcommand{\rk}{\operatorname{rk}}
\DeclareMathOperator{\Hom}{Hom}
\newcommand{\Pic}{\mathscr{P}ic\,}
\newcommand{\ord}{\operatorname{ord}}
\newcommand{\codim}{\operatorname{codim}}
\newcommand{\RR}{\mathbb{R}}
\newcommand{\Gm}{\mathbb{G}_{\operatorname{m}}}
\newcommand{\bcd}{\begin{center}\begin{tikzcd}}
\newcommand{\ecd}{\end{tikzcd}\end{center}}
\newcommand{\Aaff}{\mathbb{A}}
\newcommand{\OO}{\mathcal{O}}
\newcommand{\N}{\mathbb{N}}
\newcommand{\Z}{\mathbb{Z}}
\newcommand{\Q}{\mathbb{Q}}
\newcommand{\R}{\mathbb{R}}
\newcommand{\Speck}{\operatorname{Spec}\kfield}
\newcommand{\kfield}{\Bbbk}
\newcommand{\Bcal}{\mathcal{B}}
\newcommand{\FF}{\mathbb{F}}
\newcommand{\Supp}{\operatorname{Supp}}
\newcommand{\Spec}{\operatorname{Spec}}
\newcommand{\acts}{\curvearrowright}
\newcommand{\on}{\operatorname}
\begin{document}
 
\title{Chow theory of toric variety bundles} 
\author{Francesca Carocci, Leonid Monin, Navid Nabijou}

\begin{abstract} We describe the Chow homology and cohomology of toric variety bundles, with no restrictions on the singularities of the fibre. We present the ordinary and equivariant homologies as modules over the cohomology of the base, identify the ordinary cohomology with homology-valued Minkowski weights, and identify the equivariant cohomology with cohomology-weighted piecewise polynomial functions. We describe the product structure on Minkowski weights via a fan displacement rule, and the non-equivariant limit via equivariant multiplicities. Along the way we establish relative analogues of the K\"unneth property and Kronecker duality. Applications include the balancing condition in logarithmic enumerative geometry.
\end{abstract}

\maketitle
\setcounter{tocdepth}{1}
\tableofcontents

\section*{Introduction} \noindent Toric variety bundles admit a rich theory that has been investigated in numerous contexts \cite{SankaranUma, Hofscheier, CarocciNabijou2, DodwellThesis}. They play a central role in enumerative geometry and the study of horospherical varieties. This paper determines their Chow theory.

\subsection{Results} The definition of a toric variety bundle is recalled in \Cref{sec: setup}. They are constructed by gluing trivial toric variety bundles via transition functions valued in the dense torus. This definition is flexible enough to produce an interesting theory, while rigid enough to retain desirable toric structures. Fix a toric variety bundle
\[ p \colon Y \to X \]
with base $X$ and fibre fan $\Sigma$. Building on work in the absolute setting (see \Cref{sec: background} below) we describe the four associated Chow theories: ordinary and equivariant homology and cohomology. Importantly, we do not impose any restrictions on the singularities of the fibre fan $\Sigma$.

The global structure, key results, and underlying assumptions of the paper are summarised in the following table:

\begin{table}[H]
\bgroup
\def\arraystretch{1.25}
\centerline{
\begin{tabular}{| P{2cm} | P{3.15cm} | P{1.8cm} | P{2.5cm} | P{2.15cm} | P{1.7cm} | P{2cm} |} \hline
Object & As an... & Section & Key result & $X$ & $\Sigma$ & Example \\ \hhline{|=|=|=|=|=|=|=|}
$A_\star^T Y$ & $A^\star X \! \otimes \! A^\star_T$-module & \Cref{sec: homology} & \Cref{prop: presentation equivariant homology relative} & Arbitrary & Arbitrary & --- \\ \hline
$A_\star Y$ & $A^\star X$-module & \Cref{sec: homology} & \Cref{prop: presentation ordinary homology relative} & Arbitrary & Arbitrary & --- \\ \hline
$A^\star Y$ & $A^\star X$-module & \Cref{sec: Minkowski weights} & Theorem~\ref{thm: Minkowski weights} & Smooth & Complete & Example~\ref{ex: MW} \\ \hline
$A^\star Y$ & $A^\star X$-algebra & \Cref{sec: product rule} & Theorem~\ref{thm: product rule} & Smooth & Complete & Example~\ref{ex: MW product} \\ \hline
$A^\star Y\!\!\to\!\!A_\star Y$ & --- & \Cref{sec: PD map} & \Cref{thm: PD map} & Smooth & Complete & --- \\ \hline
$A^\star_T Y$ & $A^\star X \! \otimes \! A^\star_T$-algebra & \Cref{sec: PP} & Theorem~\ref{thm: piecewise polynomials} & Smooth & Arbitrary & ---  \\ \hline
$A^\star_T Y\!\!\to\!\!A^\star Y $ & --- &\Cref{sec: PP to MW} & \Cref{prop: PP to MW} & Smooth & Complete & Example~\ref{ex: MW to PP} \\ \hline
\end{tabular}
}
\egroup
\end{table}
\begin{center} \vspace{-17pt} \textsc{Table 1.} Summary of the paper \end{center}

Along the way we establish relative forms of the K\"unneth property (\Cref{thm: relative Kunneth}) and Kronecker duality (\Cref{thm: relative Kronecker}) for toric variety bundles.

The assumptions on $\Sigma$ are sharp: already for toric varieties, completeness is required to describe the Chow cohomology in terms of Minkowski weights. Note that we impose no conditions on the singularities of $\Sigma$. The assumptions on $X$ are sufficient for applications to enumerative geometry. For $A^\star Y$ they are almost sharp, see \Cref{rmk: Minkowski weight assumptions sharp}. For $A^\star_T Y$ they are not sharp, but can be relaxed at the cost of imposing that $\Sigma$ is projective, see \Cref{rmk: PP assumptions sharp}.

We work with $\Z$ coefficients. If we instead work with $\Q$ coefficients, the same results and proofs apply with $X$ a Deligne--Mumford stack, or the coarse space thereof.

\subsection{Background} \label{sec: background} The Chow theory of toric varieties has been studied by many authors and is surprisingly rich, especially for singular toric varieties. Since our results build directly on this existing body of work, we provide a brief synopsis:
\begin{itemize}
\item In \cite{FMSS} the authors give module presentations for the ordinary and equivariant Chow homology of arbitrary toric varieties, and also establish the K\"unneth property and Kronecker duality in this context. Several preliminary results are established in the setting of general torus actions, and we make use of these in the present paper.
\item In \cite{BrionPP,BrionVergne,PaynePP} the equivariant Chow cohomology is equated with the ring of piecewise polynomial functions on the fan. The three papers proceed in increasing levels of generality, from smooth to simplicial to arbitrary toric varieties.
\item In \cite{FultonSturmfels} the ordinary Chow cohomology of a complete toric variety is equated with the ring of Minkowski weights. The product structure is described via an elegant fan displacement rule.
\item In \cite{KatzPayne} the limiting map from equivariant to ordinary Chow cohomology is described in terms of equivariant multiplicities, obtained by resolving singularities and applying equivariant localisation. This builds on earlier work \cite{Rossmann,BrionEquivariant}.
\end{itemize}
We extend all these results to toric variety bundles, as summarised in the above table. Absolute notions are replaced by relative notions, e.g. $\Z$-valued Minkowski weights are replaced by $A_\star X$-valued Minkowski weights. 

There is some previous work on the Chow theory of toric variety bundles, focusing on the cases where $\Sigma$ is smooth \cite{SankaranUma,DasguptaKhanUma,GKM} or on cycles of codimension one \cite{DodwellThesis}. In \cite{BoteroGeneralized} Chow cohomology is also computed for rational $T$-varieties of complexity one.

\subsection{Highlights} We highlight the novel aspects of our work.

\subsubsection{Mixing collection (\Cref{sec: setup})} Fix a toric variety bundle $p \colon Y \to X$ with fibrewise dense torus $T$. By definition there is a dense principal $T$-bundle $P \subseteq Y$ over $X$, the data of which is equivalent to a collection of line bundles
\begin{equation} \label{eqn: mixing collection introduction} L \colon M \to \Pic X \end{equation}
where $M$ is the character lattice of $T$. We call \eqref{eqn: mixing collection introduction} the \textbf{mixing collection}. The mixing collection determines the global twisting of the bundle and plays a central role in its Chow theory. In particular we are interested in the map
\[ \updelta \colon M \to A^1 (X) \]
recording the first Chern classes of the line bundles $L(m)$.

\subsubsection{Minkowski weights (\Cref{sec: Minkowski weights})} For bundles, a Minkowski weight of codimension $k$ is no longer an assignment of integers to cones, but rather an assignment of base classes
\[ W(\upsigma) \in A_{\dim X + \operatorname{codim} \upsigma - k}(X) \]
for all $\upsigma \in \Sigma$. These classes may be nontrivial whenever $k - \dim X \leqslant \operatorname{codim}(\upsigma) \leqslant k$ and not just when $\codim \upsigma=k$. Given $\upgamma \in A^\star Y$ the corresponding Minkowski weight is defined by $W(\upsigma) \colonequals p_\star (\upgamma \cap [Y(\upsigma)] )$ where $Y(\upsigma) \subseteq Y$ is the fibrewise toric stratum corresponding to $\upsigma \in \Sigma$. 

The balancing condition for Minkowski weights is also more intricate, using the mixing collection to intertwine classes attached to cones of different dimensions. For every $\uptau \in \Sigma$ we let $M(\uptau) = \uptau^\perp \cap M$. The balancing condition states that for every $m \in M(\uptau)$,
\[ \sum_{\substack{\upsigma \in \Sigma, \, \upsigma \supseteq \uptau \\ \dim \upsigma = \dim \uptau+1}} \langle m, n_{\upsigma \uptau} \rangle W(\upsigma) = \updelta(m) \cap W(\uptau) \]
where $n_{\upsigma \uptau} \in \upsigma \cap N$ is any lattice point whose image generates the one-dimensional lattice $N_\upsigma/N_\uptau$.

\subsubsection{Relative K\"unneth property and Kronecker duality (Sections~\ref{sec: relative Kunneth}~and~\ref{sec: relative Kronecker})} In order to establish the Minkowski weight description of the Chow cohomology, we first prove relative forms of the K\"unneth property and Kronecker duality for toric variety bundles. The relative K\"unneth property establishes a canonical isomorphism of $A^\star X^\prime$-modules
\[ A_\star X^\prime \otimes_{A^\star X} A_\star Y \cong A_\star (X^\prime \times_X Y) \]
for any toric variety bundle $p \colon Y \to X$ and any morphism $X^\prime \to X$. This is then used as input into the proof of relative Kronecker duality, which establishes a canonical isomorphism of $A^\star X$-modules:
\begin{align*} A^\star Y & \cong \Hom_{A^\star X} (A_\star Y, A_\star X) \\
\upgamma & \mapsto (V \mapsto p_\star(\upgamma \cap V)).	
\end{align*}
Once relative Kronecker duality is proved, the Minkowski weight description of $A^\star Y$ follows immediately from the presentation of $A_\star Y$ obtained in \Cref{prop: presentation ordinary homology relative}.

\subsubsection{Product rule for Minkowski weights (\Cref{sec: product rule})} Fix a generic displacement vector $v \in N$. The product of two Minkowski weights $W_1$ and $W_2$ is then given by the formula
\[ (W_1 W_2)(\uptau) = \sum_{(\upsigma_1,\upsigma_2)} c_{\upsigma_1 \upsigma_2} (W_1(\upsigma_1) \cdot W_2(\upsigma_2)) \]
where $c_{\upsigma_1 \upsigma_2} = [N : N_{\upsigma_1}+N_{\upsigma_2}]$ and the sum is over ordered pairs of cones $\upsigma_1,\upsigma_2 \supseteq \uptau$ such that
\begin{enumerate}
	\item $\upsigma_1 \cap (\upsigma_2+v) \neq \emptyset$,
	\item $\codim \upsigma_1 + \codim \upsigma_2 = \codim \uptau$.
\end{enumerate}
Implementing this formula results in a fan displacement algorithm for the product of Minkowski weights, extending \cite{FultonSturmfels}.

Since each Minkowski weight attaches classes to cones of various dimensions, this algorithm is more complicated than in the absolute setting. We overlay the fan and its displacement, and assign a class to \emph{every cell} in the resulting polyhedral complex by multiplying the classes given by $W_1$ and $W_2$. We then collapse this polyhedral complex down to the original fan and sum the classes appropriately. This is illustrated in \Cref{ex: MW product}, see in particular \eqref{eqn: product MW example displaced fan overlay} and \eqref{eqn: example product of MW}.

\subsubsection{From piecewise polynomials to Minkowski weights (\Cref{sec: PP to MW})} We identify $A^\star_T Y$ with the algebra of cohomology-weighted piecewise polynomial functions:
\[ A^\star_T Y \cong \operatorname{PP}^\star(\Sigma) \otimes_{\Z} A^\star X.\]
Given a homogeneous piecewise polynomial
\[ f \in \operatorname{PP}^k(\Sigma) \otimes \mathbbm{1}_X \subseteq A^k_T Y \]
we wish to describe its non-equivariant limit in $A^k Y$ as a Minkowski weight $W_f$ of codimension $k$. In the absolute setting, integers $W_f(\uptau)$ are associated to cones of codimension $k$ by multiplying the polynomial pieces $f_\upsigma$ of degree $k$ with certain rational functions $e_{\upsigma \uptau}$ of degree $-k$, known as equivariant multiplicities.

For bundles we no longer expect integers but rather classes, and we do not only consider cones of codimension $k$ but rather cones of codimension $\leq\!k$. The class $W_f(\uptau)$ is obtained by multiplying the polynomial pieces $f_\upsigma$ of degree $k$ with equivariant multiplicities $e_{\upsigma \uptau}$ of degree $\geq\!\!-k$, and then applying the mixing collection $\updelta$ to the resulting polynomial. The formula is:
\[ W_f(\uptau) = \updelta \left( \sum_\upsigma f_\upsigma e_{\upsigma \uptau} \right) \cap [X] \in A_\star X \]
where the sum is over maximal cones $\upsigma$ containing $\uptau$. The $f_\upsigma$ are polynomials and the $e_{\upsigma \uptau}$ are rational functions, but the sum within the brackets is in fact a polynomial: see \Cref{lem: residue sum is a polynomial} and \Cref{ex: MW to PP}.

\subsection{Applications} There are at least two prospective applications. The first is to logarithmic enumerative geometry. Toric variety bundles play a central role in logarithmic Gromov--Witten and Donaldson--Thomas theory where they arise as irreducible components of logarithmic expansions: see \cite[Section~4]{CarocciNabijou1} and \cite[Example~4.17]{CarocciNabijou2}. Current research investigates subschemes (not necessarily subcurves) of a toric variety bundle with fixed intersection behaviour along the boundary \cite{RangExpansions,LogDT,LogMNOP,CarocciNabijou1,KH-PT,KHQuot,TschanzExpansions}.

Tropicalising these subschemes produces piecewise linear subschemes of the fan, and a key problem is to describe the balancing condition satisfied by their slopes. For subcurves this is well-understood \cite{DodwellThesis} but a general description remains elusive. Our work provides a direct route to the balancing condition in all dimensions. It is important to permit singular fibres, since the most efficient logarithmic expansions are rarely smooth.

Toric variety bundles with smooth fibres also play an important role in classical Gromov--Witten theory, where they serve as intermediaries in degeneration-localisation schema \cite{BrownToricFibration,JiangTsengYou,CoatesVirasoro,OhGIT,KotoMirror}.

The second application is to toroidal horospherical varieties. These are the spherical varieties which form toroidal compactifications of homogeneous varieties $G/H$, where $H$ contains a maximal unipotent subgroup. It is known that every toroidal horospherical variety has the structure of a toric variety bundle over a generalised flag variety, see e.g. \cite[Section 3]{Hofscheier} and references therein. The Chow theory of generalised flag varieties is known, see e.g. \cite{Borel}, therefore our results determine the Chow theory of arbitrary toroidal horospherical varieties.

Another motivation comes from the study of general varieties with torus action, known as $T$-varieties \cite{AltmannGeometry}. Despite admitting a concrete description via polyhedral divisors, $T$-varieties are usually studied only in the case of low complexity, when the torus has small codimension. Toric variety bundles provide a class of well-understood $T$-varieties of unbounded complexity. Our work together with \cite{BoteroGeneralized} constitutes the first step towards the Chow theory of general $T$-varieties.

\subsection*{Acknowledgements} This project began at the EPFL, which we thank for ideal working conditions and mountain views. We thank Ana Botero, Johannes Hofscheier and Kimuars Kaveh for helpful conversations. We thank the anonymous referee for useful comments.

F.C. was supported by the Swiss National Science Foundation (SNFS) Grant PZ00P2\_208699, the EPFL Chair of Arithmetic Geometry (ARG), and the MIUR Excellence Department Project Mat-Mod@TOV, CUP E83C23000330006, awarded to the Department of Mathematics, University~of~Rome~Tor~Vergata, and also acknowledges the support of the PRIN Project ``Moduli spaces and birational geometry'' 2022L34E7W. L.M. was partially supported by Swiss National Science Foundation (SNSF) Grant 200021E\_224099 and by the Deutsche Forschungsgemeinschaft (DFG) -- Project Number 539974215.

\subsection*{Conventions and notation} We work over an algebraically closed field of characteristic zero, denoted $\kfield$. A variety is a connected, integral, separated scheme of finite type over $\kfield$. Given a group $G$ acting on a scheme $W$ we write $[W/G]$ for the stack quotient.

We assume familiarity with toric varieties \cite{FultonToric,CLS,Oda,KKMS} and intersection theory as developed by Fulton--MacPherson \cite{FultonBig}, including Chow homology (Chow groups) and Chow cohomology (bivariant Chow). Though not strictly required, familiarity with the Chow theory of toric varieties is important motivationally, see \Cref{sec: background} for a summary.

Notation is established in \Cref{sec: setup}. We strive to adopt uniform symbols when introducing Chow classes, namely:
\[ c \in A^\star X, \quad \upgamma \in A^\star Y, \quad V \in A_\star X, \quad V \in A_\star Y.\]
Besides this, we collect the most common notation in the following table:
\[
\bgroup
\def\arraystretch{1.2}
\begin{tabular}{| P{2.5cm} | P{13cm} |} \hline
Notation & Meaning \\ \hhline{|=|=|}
$N$ & a lattice \\ \hline
$M$ & the dual lattice to $N$ \\ \hline
$T$ & the algebraic torus associated to $N$ \\ \hline
$\Sigma$ & a fan with underlying lattice $N$ \\ \hline
$Z$ & the toric variety associated to $\Sigma$ \\ \hline
$X$ & the base variety \\ \hline
$L$ & the mixing collection $L \colon M \to \Pic X$ \\ \hline
$\updelta$ & the map $\updelta \colon M \to A^1 X$ recording the Chern classes of the mixing collection \\ \hline
$Y \xrightarrow{p} X$ & the toric variety bundle associated to $(\Sigma,L)$ \\ \hline
$Z(\upsigma)$ & the closed stratum $Z(\upsigma) \subseteq Z$ associated to $\upsigma \in \Sigma$ \\ \hline
$Y(\upsigma)$ & the fibrewise stratum $Y(\upsigma) \subseteq Y$ associated to $\upsigma \in \Sigma$\\ \hline
\end{tabular}
\egroup
\]


\section{Toric variety bundles} \label{sec: setup}

\subsection{Toric varieties} We assume familiarity with toric varieties \cite{FultonToric,CLS}. We use standard notation: $\Sigma$ denotes a fan in a lattice $N$ with dual lattice $M$. We write $Z$ for the associated toric variety and $T$ for the dense torus.

Given a cone $\upsigma \in \Sigma$ we let $N_\upsigma \subseteq N$ denote the saturated sublattice generated by $\upsigma \cap N$. We obtain short exact sequences of lattices
\[ 0 \to N_\upsigma \to N \to N(\upsigma) \to 0, \qquad 0 \to M(\upsigma) \to M \to M_\upsigma \to 0 \]
with $M_\upsigma$ dual to $N_\upsigma$ and $M(\upsigma)$ dual to $N(\upsigma)$. We have $M(\upsigma)=\upsigma^\perp \cap M$. The corresponding closed stratum is denoted
\[ Z(\upsigma) \hookrightarrow Z\]
and has dense torus $T(\upsigma) = N(\upsigma) \otimes_\Z \Gm = \Spec \kfield[M(\upsigma)]$.

\subsection{Toric variety bundles} \label{sec: toric variety bundles} Given an irreducible base variety $X$ we wish to construct a family 
\[ p \colon Y \to X\]
whose fibres are toric varieties. We recall the construction of \cite[Section~3.1]{CarocciNabijou2} (which also appears independently in \cite{SankaranUma}). For a comparison to alternative notions, see \cite[Section~3]{CarocciNabijou2}. The following input data will be fixed throughout.
\begin{definition} \textbf{Bundle data} over a base variety $X$ consists of:
\begin{enumerate}
\item \textbf{Fibre fan.} A fan $\Sigma$ in a lattice $N$.\smallskip
\item \textbf{Mixing collection.} For each $m \in M$ a line bundle $L(m)$ on $X$ together with compatible isomorphisms
\[ L(m_1) \otimes L(m_2) \cong L(m_1+m_2), \qquad L(0) \cong \OO_X. \]
In fancy language, $L$ is a $2$-group homomorphism (a lax monoidal functor):
\[ L \colon M \to \Pic X \]
where $\Pic X$ is the groupoid whose objects and arrows are, respectively, line bundles on $X$ and isomorphisms thereof.
\end{enumerate}
\end{definition}

\begin{remark} Given a mixing collection $L$, the associated \textbf{coarse mixing collection} $[L]$ is the map recording only the isomorphism classes of the $L(m)$:
\[ [L] \colon M \xrightarrow{L} \Pic X \to  \operatorname{Pic} X.\]
This determines the associated toric variety bundle up to (non-unique) isomorphism, and is a useful conceptual crutch. However at certain points it will be important to have chosen preferred representatives for these isomorphism classes.
\end{remark}

Fix bundle data $(\Sigma,L)$ over $X$. The fibre fan $\Sigma$ produces a toric variety $Z$ with dense torus $T=N \otimes_\Z \Gm = \Spec \kfield[M]$. The mixing collection $L$ produces a principal $T$-bundle $P \to X$.

\begin{definition} \label{def: toric variety bundle}The \textbf{toric variety bundle} associated to the bundle data $(\Sigma,L)$ is obtained via the mixing construction
\[Y \colonequals [(P\times Z) \slash T] \xrightarrow{p} [P/T] = X\]
where $T$ acts antidiagonally on $P \times Z$, so that $(tp,z) \sim (p,tz)$. We use the stack quotient, but since the action is free the morphism $p$ is representable.
\end{definition}
Toric variety bundles are equivalent to Zariski-locally trivial toric variety fibrations containing a dense principal torus bundle \cite[Lemma~3.4]{CarocciNabijou2}. They are constructed by gluing trivial bundles via transition functions valued in $T$ (as opposed to some larger subgroup of $\operatorname{Aut} Z$).

For the rest of the paper we fix bundle data $(\Sigma,L)$ over a base variety $X$ and study the associated toric variety bundle $p \colon Y \to X$.

\subsection{Mixing collection and universal bundle} \label{sec: mixing collection and universal bundle}
The mixing collection $L \colon M \to \Pic X$ determines the global twisting of the toric variety bundle, and plays a fundamental role in the forthcoming discussion. From the mixing collection we obtain divisor classes in the base, denoted:
\begin{align*} \updelta \colon M & \to A^1(X) \\
m & \mapsto \operatorname{c}_1(L(m)).\end{align*}

Consider the classifying stack $\Bcal T \colonequals [\Speck/T]$. The morphism of stacks $[Z/T] \to \Bcal T$ is the universal toric variety bundle with fibre $Z$: given any toric variety bundle $Y \to X$ with fibre $Z$, the mixing collection is equivalent to a morphism $L \colon X \to \Bcal T$ and there is a cartesian square \cite[Section~3.2]{CarocciNabijou2}:
\[
\begin{tikzcd}
Y \ar[d] \ar[r] \ar[rd,phantom,"\square"] & {[Z/T]} \ar[d] \\
X \ar[r,"L" swap] & \Bcal T.	
\end{tikzcd}
\]
The stack $[Z/T]$ is the Artin fan \cite[Section~5]{AbramovichEtAlSkeletons} of $\Sigma$ viewed as an abstract cone complex. The morphism $[Z/T] \to \Bcal T$ is equivalent to the data of an embedding of this cone complex as a fan in the lattice $N$.

\subsection{Fibrewise strata}
There is a fibrewise action $T \curvearrowright Y$ induced by the action $T \curvearrowright (P \times Z)$ on the second factor, and covering the trivial action $T \curvearrowright X$. Each cone $\upsigma\in\Sigma$ induces a closed embedding $i_\upsigma$ referred to as a toric variety subbundle
\bcd
Y(\upsigma) \arrow[hookrightarrow]{r}{i_{\upsigma}} \ar[dr,"p_\upsigma" {xshift=-12pt,yshift={-8pt}}] & Y\ar[d,"p"]\\
& X
\ecd
where $p_\upsigma \colon Y(\upsigma) \to X$ is also a toric variety bundle, obtained from the following bundle data:
\begin{enumerate}
 \item \textbf{Fibre fan.} The star fan $\operatorname{Star(\upsigma,\Sigma)}$ (see e.g. \cite[Section~3.1]{FultonToric}).
 \item \textbf{Mixing collection.} The restriction $L|_{M(\upsigma)} \colon M(\upsigma) \to \Pic X$.
\end{enumerate}
Each fibre of $p_\upsigma$ is a closure of an orbit of the action $T \curvearrowright Y$. In a local trivialisation $Y|_U \cong U \times Z$ it gives the closure of the orbit of $T \curvearrowright Z$ corresponding to $\upsigma$.

\subsection{Chow theories} Recall that we work over an algebraically closed field of characteristic zero. In this setting, there are four Chow theories of schemes: (non)equivariant (co)homology. Details may be found in the following references:
\[
\bgroup
\def\arraystretch{1.5}
\begin{tabular}{ P{3cm} | P{4cm} | P{4cm} }
& Homology & Cohomology \\ \hline
Ordinary & \cite[Chapters~1-6]{FultonBig} & \cite[Chapter~17]{FultonBig} \\
Equivariant & \cite[Section~2.2]{EdidinGrahamIntersection} & \cite[Section~2.6]{EdidinGrahamIntersection}
\end{tabular}
\egroup
\]
We will also use the Chow theory of Artin stacks as developed in \cite{Kresch}. However we will only use this for global quotient stacks, in which case the theory coincides with the equivariant theory of the prequotient.

Given a toric variety bundle $p \colon Y \to X$ the four Chow theories of $Y$ form algebraic structures over the Chow cohomology of $X$.

\begin{definition}\label{def: ordinary homology} The \textbf{ordinary Chow homology} $A_\star Y$ is a $\Z$-module. We endow it with an $A^\star X$-module structure by combining the pullback $p^\star \colon A^\star X \to A^\star Y$ with the cap product $A^\star Y \otimes_\Z A_\star Y \to A_\star Y$.
\end{definition}

\begin{definition} \label{def: ordinary cohomology} The \textbf{ordinary Chow cohomology} $A^\star Y$ is a $\Z$-algebra. We endow it with an $A^\star X$-algebra structure via the pullback $p^\star \colon A^\star X \to A^\star Y$.
\end{definition}

Turning to the equivariant theory, the $T$-equivariant Chow cohomology of a point is:
\[ A^\star_T = \operatorname{Sym} M. \]
Since the action $T \curvearrowright X$ is trivial, there is a natural isomorphism of rings $A^\star_T X = A^\star X \otimes_\Z A^\star_T$.

\begin{definition} \label{def: equivariant homology} The \textbf{equivariant Chow homology} $A_\star^T Y$ is an $A^\star_T$-module. We endow it with an $A^\star X \otimes_\Z A^\star_T$-module structure by combining the pullback $p^\star \colon A^\star_T X \to A^\star_T Y$ with the equivariant cap product $A^\star_T Y \otimes_\Z A_\star^T Y \to A_\star^T Y$.
\end{definition}

\begin{definition} \label{def: equivariant cohomology} The \textbf{equivariant Chow cohomology} $A^\star_T Y$ is an $A^\star_T$-algebra. We endow it with an $A^\star X \otimes_\Z A^\star_T$-algebra structure via the pullback $p^\star \colon A^\star_T X \to A^\star_T Y$.
\end{definition}

Summarising, we have:
\begin{gather*} 
A_\star Y \in A^\star X\text{-Mod}, \qquad \qquad A^\star Y  \in A^\star X\text{-Alg}, \\
A_\star^T Y  \in A^\star X \otimes_\Z A^\star_T \text{-Mod}, \qquad A^\star_T Y  \in A^\star X \otimes_\Z A^\star_T \text{-Alg}.
\end{gather*}


\section{Chow homology} \label{sec: homology}

\noindent \textbf{Assumptions in this section:} $X$ arbitrary, $\Sigma$ arbitrary.

\subsection{Invariant subschemes and eigenfunctions} \label{sec: subschemes and eigenfunctions}
Consider the fibrewise action $T \acts Y$. The equivariant and ordinary Chow homology will be generated by classes of $T$-invariant integral subschemes, with relations given by eigenfunctions thereon. We describe these.

\begin{proposition}\label{prop: invariant subvarieties} The $T$-invariant integral subschemes of $Y$ are precisely those of the form
\[ Y(\uptau)|_V \colonequals Y(\uptau) \times_X V \]
where $\uptau \in \Sigma$ and $V \subseteq X$ is an integral subscheme.
\end{proposition}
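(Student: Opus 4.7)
The plan is to reduce the classification to the absolute toric setting by passing to a Zariski-trivialising open subset of $X$. Locally the bundle is a product $U \times Z$ with $T$ acting trivially on $U$, and the $T$-invariant integral subschemes of $U \times Z$ will turn out to be precisely products $V' \times Z(\uptau)$ with $V' \subseteq U$ integral and $\uptau \in \Sigma$. The global statement then follows because the transition functions of $Y \to X$ take values in $T$ and so preserve the orbit stratification of $Z$ setwise.

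For the forward implication, $Y(\uptau)|_V$ is $T$-invariant since $Y(\uptau)$ is and $T$ acts trivially on $X$. For integrality I would use that $Y(\uptau) \to X$ is itself a toric variety bundle (\Cref{sec: toric variety bundles}), hence Zariski-locally a product $U \times Z(\uptau)$; restriction to $V$ gives integral local models $(V \cap U) \times Z(\uptau)$, and global irreducibility of $Y(\uptau)|_V$ follows from the existence of an open dense principal torus subbundle, which is Zariski-locally a product of irreducible pieces over an irreducible base.

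For the reverse implication, let $W \subseteq Y$ be $T$-invariant and integral. Cover $X$ by trivialising opens $\{U_i\}$, restrict $W$, and consider the induced stratification $W|_{U_i} = \bigsqcup_\upsigma W|_{U_i} \cap (U_i \times T(\upsigma))$. Irreducibility of $W|_{U_i}$ forces exactly one stratum $U_i \times T(\uptau_i)$ to contain a dense open of $W|_{U_i}$. The core local step is to show that every $T$-invariant closed subset of $U_i \times T(\uptau_i)$ is of the form $V' \times T(\uptau_i)$: transitivity of the $T$-action on $T(\uptau_i)$ forces each fibre over $U_i$ to be either empty or all of $T(\uptau_i)$, and closedness of the full-fibre locus is a direct check using closedness of the original subset. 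Taking closures in $U_i \times Z$ yields $W|_{U_i} = V'_i \times Z(\uptau_i)$ with $V'_i$ integral, and since the transition functions of $Y \to X$ preserve each orbit closure $Z(\upsigma)$ setwise, the cone $\uptau_i$ is independent of $i$ and the pieces $V'_i = \overline{p(W)} \cap U_i$ glue to an integral subscheme $V \subseteq X$ with $W = Y(\uptau)|_V$. The main obstacle is the local classification in the trivial case; once that is established, the gluing is routine bookkeeping.
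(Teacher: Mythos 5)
Your approach is essentially the paper's: reduce to the trivial local model $U \times Z$ over a Zariski-trivialising open and then invoke the absolute classification. Two implementation differences are worth flagging. First, the paper handles the local case by citing \cite[Lemma~3(a)]{FMSS} directly, whereas you re-derive it from scratch via the orbit stratification and transitivity of $T$ on each $T(\uptau)$; your derivation is correct but the citation is available and saves work. Second, the paper uses a \emph{single} trivialising open $U$ (implicitly chosen to meet the image of $W$) and then concludes via density of $W\times_X U$ in $W$, while you cover $X$ and glue. Your gluing step, in particular the claim that $\uptau_i$ is independent of $i$, tacitly relies on the irreducibility of $\overline{p(W)}$ so that any two trivialising opens meeting $p(W)$ have overlapping restrictions; you should say this explicitly, since without it the local cones could a priori disagree on disjoint patches. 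The paper's one-open-plus-density route sidesteps this bookkeeping entirely.
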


\begin{proof} The integral subscheme $Y(\uptau)|_V$ is $T$-invariant since locally on $V$ it takes the form $V \times Z(\uptau)$ where $Z(\uptau) \subseteq Z$ is the closed stratum in the associated toric variety. Suppose conversely we are given a $T$-invariant integral subscheme 
\[ W \subseteq Y.\]
If $Y=X \times Z \to X$ is a trivial toric variety bundle, then \cite[Lemma~3(a)]{FMSS} precisely states that $W = Y(\uptau)|_V$ for some cone $\uptau \in \Sigma$ and integral subscheme $V \subseteq X$. For the general case, choose a Zariski open subset $U \subseteq X$ over which $Y$ is trivial \cite[Lemma~3.4]{CarocciNabijou2}. Then $W \times_X U = Y(\uptau)|_{V \cap U}$ by the above, for some integral subscheme $V \subseteq X$. Since $W$ and $V$ are integral, we have dense open embeddings
\[ W \times_X U \hookrightarrow W, \qquad Y(\uptau)|_{V \cap U} \hookrightarrow Y(\uptau)|_V \]
and since the interiors coincide, we see that $W=Y(\uptau)|_V$ as required.
\end{proof}

We now turn to the eigenfunctions. Fix a $T$-invariant subvariety $Y(\uptau)|_V$ as in \Cref{prop: invariant subvarieties}. Consider the torus $T(\uptau)$ acting on the field of rational functions:
\begin{equation} \label{eqn: action on rational functions}  T(\uptau) \curvearrowright R(Y(\uptau)|_V).\end{equation}
We now describe the weight decomposition. Note that $Y(\uptau)|_V \to V$ is itself a toric variety bundle, with a mixing collection
\[ L \colon M(\uptau) \to M \to \Pic X \to \Pic V \]
where the final map is given by restriction. As in \Cref{sec: mixing collection and universal bundle} we obtain a map $\updelta \colon M(\uptau) \to A^1(V)$ by taking each line bundle to its first Chern class.

Choose a basis $m_1,\ldots,m_k$ for $M(\uptau)$ and let $L_i \colonequals L(m_i)$. Choose a nonzero rational section $s_i$ of $L_i$ and let $D_i \colonequals \operatorname{div} s_i$. This is a Cartier divisor on $V$ with $[D_i] = \operatorname{c}_1(L_i) \cap [V] = \updelta(m_i) \cap [V]$. The rational section $s_i$ is invertible when restricted to the complement of $\Supp D_i$, giving an isomorphism:
\[ s_i \colon \OO_{V\setminus \Supp D_i} \xrightarrow{\cong} L_i|_{V \setminus \Supp D_i}.\]
Inverting and removing the zero section, we obtain an isomorphism of principal bundles:
\begin{equation} \label{eqn: si isomorphism of Gm torsors} s_i^{-1} \colon L_i|_{V \setminus \Supp D_i}^\star \xrightarrow{\cong} \OO^\star_{V \setminus \Supp D_i}. \end{equation}
Consider the following open subset of $V$:
\begin{equation} \label{eqn: definition of U} U \colonequals V \setminus (\Supp D_1 \cup \cdots \cup \Supp D_k).\end{equation}
Assembling the isomorphisms \eqref{eqn: si isomorphism of Gm torsors} we obtain an isomorphism of principal $T(\uptau)$-bundles:
\begin{equation} \label{eqn: eigenfunction trivialisation principal bundle over open set} (s_1^{-1},\ldots,s_k^{-1}) \colon P(\uptau)|_U \xrightarrow{\cong} U \times T(\uptau) \end{equation}
which induces an isomorphism $Y(\uptau)|_U \cong U \times Z(\uptau)$. Passing from $U$ to $V$ we obtain an isomorphism of $T(\uptau)$-representations:
\begin{equation} \label{eqn: isomorphism rational function fields} R(Y(\uptau)|_V) \cong R(V \times Z(\uptau)).\end{equation}
Up to scaling by units, the eigenfunctions of $T(\uptau) \curvearrowright R(Z(\uptau))$ are the monomials $z^m \in R(Z(\uptau))$ for $m \in M(\uptau)$. As a direct consequence of \cite[Lemma 3(b)]{FMSS} the weight decomposition of $R(V \times Z(\uptau))$ takes the form:
\begin{equation} \label{eqn: weight decomposition product} R(V \times Z(\uptau)) = \bigoplus_{m \in M(\uptau)} p_1^\star R(V) \cdot p_2^\star z^m. \end{equation}
Given $z^m \in R(Z(\uptau))$ we let $t^m \in R(Y(\uptau)|_V)$ denote the rational function given by the composite:
\[ t^m \colon P(\uptau)|_U \xrightarrow{\cong} U \times T(\uptau) \xrightarrow{p_2} T(\uptau) \xrightarrow{z^m} \Gm.\]
\begin{lemma} \label{lem: shape of eigenfunction} Every eigenfunction $f \in R(Y(\uptau)|_V)$ can be written uniquely as
\[ f= p^\star g \cdot t^m \]
for some $g \in R(V)$ and $m \in M(\uptau)$, where $p \colon Y(\uptau)|_V \to V$ is the bundle projection. 
\end{lemma}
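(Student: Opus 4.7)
The plan is to extract the statement from the isomorphism of $T(\uptau)$-representations already constructed in \eqref{eqn: isomorphism rational function fields}, together with the weight decomposition \eqref{eqn: weight decomposition product} borrowed from \cite[Lemma 3(b)]{FMSS}. Nothing new has to be proved about toric varieties; the work lies entirely in matching the normal forms on the two sides of the isomorphism.

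\textbf{Step 1.} First I would observe that because \eqref{eqn: isomorphism rational function fields} is an isomorphism of $T(\uptau)$-representations, it restricts to a bijection between eigenfunctions of weight $m$ on both sides. So it suffices to describe the weight-$m$ eigenspace in $R(Y(\uptau)|_V)$, and for this I pull it back to the product via \eqref{eqn: weight decomposition product}: the weight-$m$ eigenspace of $R(V \times Z(\uptau))$ is exactly $p_1^\star R(V) \cdot p_2^\star z^m$.

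\textbf{Step 2.} The heart of the argument is checking that, under the isomorphism \eqref{eqn: isomorphism rational function fields}, the rational function $p^\star g \cdot t^m$ corresponds to $p_1^\star g \cdot p_2^\star z^m$. Since rational functions are determined by their restrictions to any dense open, it is enough to work on $Y(\uptau)|_U$. On this open locus the trivialisation \eqref{eqn: eigenfunction trivialisation principal bundle over open set} was constructed precisely so that the bundle projection $p$ factors through the first projection $p_1 \colon U \times Z(\uptau) \to U$; this identifies $p^\star g$ with $p_1^\star g$. By the very definition of $t^m$ as $z^m \circ p_2 \circ (s_1^{-1},\ldots,s_k^{-1})$, it is identified with $p_2^\star z^m$. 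Multiplying the two identifications gives the desired correspondence.

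\textbf{Step 3.} Combining Steps 1 and 2, every weight-$m$ eigenfunction of $R(Y(\uptau)|_V)$ is of the form $p^\star g \cdot t^m$ for some $g \in R(V)$. Uniqueness of the pair $(g,m)$ is immediate from the directness of the weight decomposition \eqref{eqn: weight decomposition product}: two distinct weights $m \neq m'$ yield $T(\uptau)$-eigenfunctions of distinct weights, while within a fixed weight the factor $g \in R(V)$ is determined by the map $p_1^\star \colon R(V) \hookrightarrow R(V \times Z(\uptau))$ being injective.

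The most delicate point is Step 2, since it requires being honest about what the isomorphism \eqref{eqn: isomorphism rational function fields} \emph{does} on rational functions rather than merely citing its existence. However, it is not really an obstacle: once one chases through the trivialisation \eqref{eqn: eigenfunction trivialisation principal bundle over open set}, the identifications $p^\star g \leftrightarrow p_1^\star g$ and $t^m \leftrightarrow p_2^\star z^m$ are tautological from the construction of $t^m$.
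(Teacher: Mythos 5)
Your proposal is correct and follows the same route as the paper, whose proof is the one-line instruction to combine the isomorphism of $T(\uptau)$-representations \eqref{eqn: isomorphism rational function fields} with the weight decomposition \eqref{eqn: weight decomposition product}. You have simply unpacked what "combine" means — in particular your Step 2, matching $p^\star g \cdot t^m$ with $p_1^\star g \cdot p_2^\star z^m$ through the trivialisation over $U$, is exactly the tautological identification the authors leave implicit.
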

\begin{proof} Combine \eqref{eqn: isomorphism rational function fields} and \eqref{eqn: weight decomposition product}. \end{proof}

The above eigenfunctions give relations in the Chow homology of $Y(\uptau)|_V$.

\begin{proposition} \label{prop: eigenfunctions} Take $g \in R(V)$ and $m \in M(\uptau)$ and consider the eigenfunction $f=p^\star g \cdot t^m$ as in \Cref{lem: shape of eigenfunction}. The class of the corresponding divisor in $Y(\uptau)|_V$ is
\begin{equation} \label{eqn: eigenfunction divisor} [\operatorname{div}f] = p^\star [\operatorname{div}g] + \bigg( \sum_{\substack{\upsigma \in \Sigma, \, \upsigma \supseteq \uptau \\ \dim \upsigma = \dim \uptau+1}} \langle m,n_{\upsigma\uptau}\rangle [Y(\upsigma)|_V] \bigg) - p^\star \updelta(m) \cap [Y(\uptau)|_V]\end{equation}
where $n_{\upsigma\uptau} \in \upsigma \cap N$ is any lattice point whose image generates the one-dimensional lattice $N_\upsigma/N_\uptau$.
\end{proposition}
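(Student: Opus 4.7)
The plan is to decompose $\operatorname{div}(f) = \operatorname{div}(p^\star g) + \operatorname{div}(t^m)$ and handle each summand separately. The first term immediately yields $p^\star[\operatorname{div}(g)]$ by flat pullback of divisors along $p\colon Y(\uptau)|_V \to V$, so the real content lies in computing $\operatorname{div}(t^m)$.

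By the construction of $t^m$ from the chosen basis trivialization, $t^m = \prod_i (t^{m_i})^{a_i}$ where $m = \sum a_i m_i$, so by additivity of divisors it suffices to prove the formula for each basis element $m_i$ and conclude by linearity. I would split $\operatorname{div}(t^{m_i})$ into two parts: components surviving in the open principal bundle locus $P(\uptau) \subseteq Y(\uptau)|_V$, and components supported in its complement. For the latter, only the codimension-one fibrewise strata $Y(\upsigma)|_V$ with $\dim \upsigma = \dim\uptau+1$ are relevant; over the open $U \subseteq V$ where the bundle is trivialized via \eqref{eqn: eigenfunction trivialisation principal bundle over open set}, $t^{m_i}$ is the pullback of the monomial $z^{m_i}$ on $Z(\uptau)$, and the classical toric computation gives multiplicity $\langle m_i, n_{\upsigma\uptau}\rangle$ along $Y(\upsigma)|_{V\cap U}$. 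Taking closures inside $Y(\uptau)|_V$ produces the horizontal contribution $\sum_\upsigma \langle m_i, n_{\upsigma\uptau}\rangle [Y(\upsigma)|_V]$.

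For the vertical components surviving in $P(\uptau)$, observe that $t^{m_i}$ factors through the natural map $P(\uptau) \to L_i^\star$ induced by the character $m_i \colon T(\uptau) \to \Gm$, and under this factorization becomes the rational function on the $\Gm$-torsor $L_i^\star$ determined by the trivializing rational section $s_i^{-1}$. The divisor of such a function is purely vertical and equal to $-p^\star D_i$, which can be verified locally near each prime divisor $D' \subseteq \Supp D_i$: choose an auxiliary local section $s'_i$ of $L_i$ that is regular and nonvanishing near $D'$, write $s_i = u \cdot s'_i$ with $\operatorname{ord}_{D'}(u) = \operatorname{ord}_{D'}(s_i)$, and use the cocycle relation $t^{m_i} = u^{-1} \cdot (t')^{m_i}$ where $(t')^{m_i}$ is regular and invertible near $p^{-1}(D')$. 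Pulling back along the flat map $P(\uptau) \to L_i^\star$ and closing up inside $Y(\uptau)|_V$ contributes $-p^\star[D_i] = -p^\star\updelta(m_i)\cap [Y(\uptau)|_V]$ to $\operatorname{div}(t^{m_i})$.

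Assembling the horizontal and vertical contributions, then extending from basis elements to general $m$ by linearity, yields the formula. The main obstacle is the vertical computation: pinning down cleanly that $t^{m_i}$ acquires a pole of order $\operatorname{ord}_{D'}(s_i)$ along $p^{-1}(D')$ without descending into fussy local coordinates, and choosing a formalism (via the factorization through $L_i^\star$, or equivalently via cocycle-theoretic comparison of local trivializations) that makes the mixing collection's contribution to the divisor class transparent.
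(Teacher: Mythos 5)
Your proposal is correct and follows essentially the same approach as the paper's proof: decompose $\operatorname{div}f$ into the $p^\star g$ and $t^m$ contributions, reduce to basis monomials $t^{m_i}$, compute the horizontal part via the trivialisation over $U$ and the classical formula for $\operatorname{div}z^{m_i}$ on $Z(\uptau)$, and compute the vertical part from $\operatorname{div}s_i = D_i$. Your more careful treatment of the vertical components via auxiliary local sections and the factorisation through $L_i^\star$ is a reasonable way to make rigorous what the paper asserts somewhat tersely, but the underlying argument is the same.
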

\begin{proof} It suffices to consider the rational function $t^m$ on $Y(\uptau)|_V$. This is invertible on the open set $P(\uptau)|_U$. By \eqref{eqn: definition of U} the complement of this open set is:
\[ Y(\uptau)|_V \setminus P(\uptau)|_U = \bigcup_{\substack{\upsigma \in \Sigma, \, \upsigma \supseteq \uptau \\ \dim \upsigma = \dim \uptau + 1}} Y(\upsigma)|_V \cup \bigcup_{i=1}^k (p^{-1}\Supp D_i).\]
We calculate the vanishing order of $t^m$ along each irreducible component of this complement.

We start with the $Y(\upsigma)|_V$. The mixing construction (\Cref{def: toric variety bundle}) extends the isomorphism \eqref{eqn: eigenfunction trivialisation principal bundle over open set} to an isomorphism of toric variety bundles over the open set $U$:
\[ Y(\uptau)|_U \cong U \times Z(\uptau).\]
This produces a morphism $Y(\uptau)|_U \to Z(\uptau)$ along which the rational function $t^m \in R(Y(\uptau)|_U)$ is the pullback of the rational function $z^m \in R(Z(\uptau))$. Crucially, $Y(\uptau)|_U$ contains the generic point of $Y(\upsigma)|_V$. The vanishing order of $t^m$ along $Y(\upsigma)|_V$ is equal to the vanishing order of $z^m$ along $Z(\upsigma)$ with the latter given by $\langle m, n_{\upsigma \uptau} \rangle$ \cite[Section~3.3]{FultonToric}. We conclude that the terms of $[\operatorname{div} t^m]$ involving the divisor classes $[Y(\upsigma)|_V]$ are:
\begin{equation} \label{eqn: component of rational function wrt fibrewise strata} \sum_{\substack{\upsigma \in \Sigma, \, \upsigma \supseteq \uptau \\ \dim \upsigma = \dim \uptau+1}} \langle m, n_{\upsigma \uptau} \rangle [Y(\upsigma)|_V]. \end{equation}
We now calculate the vanishing order along the irreducible components of $\cup_{i=1}^k (p^{-1} \Supp D_i)$. We first consider the rational functions
\[ t_i \colon P(\uptau)|_U \xrightarrow{(s_1^{-1},\ldots,s_k^{-1})} U \times T(\uptau) \xrightarrow{z^{m_i}} \Gm. \]
Since $\operatorname{div}s_i = D_i$ it follows that the terms of $[\operatorname{div} t_i]$ involving the irreducible components of $\cup_{i=1}^k (p^{-1} \Supp D_i)$ give precisely $-[p^{-1} D_i]=-p^\star[D_i]$. Writing $m=\Sigma_{i=1}^k c_i m_i$ in terms of the chosen basis, we have
\[ t^m = t_1^{c_1} \cdots t_k^{c_k} \]
and it follows that the terms of $[\operatorname{div} t^m]$ involving the irreducible components of $\cup_{i=1}^k (p^{-1} \Supp D_i)$ are:
\begin{align} \label{eqn: component of rational function wrt Di}
\nonumber -\sum_{i=1}^k c_i p^\star [D_i] & = -\sum_{i=1}^k c_i p^\star \left( \operatorname{c}_1 (L(m_i)) \cap [V] \right) \\
\nonumber & = -\sum_{i=1}^k c_i p^\star \updelta(m_i) \cap [Y(\uptau)|_V] \\
\nonumber & = -p^\star \left( \sum_{i=1}^k c_i \updelta(m_i)\right) \cap [Y(\uptau)|_V] \\
& = -p^\star \updelta(m) \cap [Y(\uptau)|_V].
\end{align}
Combining \eqref{eqn: component of rational function wrt fibrewise strata} and \eqref{eqn: component of rational function wrt Di} we conclude the result.
\end{proof}

\subsection{Presentations} \label{sec: presentation homology relative} We now use the above characterisations of invariant subvarieties and eigenfunctions to produce presentations for the Chow homologies.

\begin{theorem}[Presentation of $A_\star^T Y$] \label{prop: presentation equivariant homology relative} Define $B_\star^T(\Sigma,L,X)$ to be the following $\Z$-graded module over the ring $A^\star X \otimes_\Z A^\star_T$:
\begin{itemize}
\item \textbf{Generators.} For each $\uptau \in \Sigma$ a generator
\[ [Y(\uptau)] \]
in homological degree $\operatorname{dim}X + \operatorname{codim} \uptau$.\smallskip
\item \textbf{Relations.} For each $\uptau \in \Sigma$ and $m \in M(\uptau) \subseteq M$ a relation
\[ \sum_{\substack{\upsigma \in \Sigma, \, \upsigma \supseteq \uptau \\ \operatorname{dim}\upsigma = \operatorname{dim}\uptau+1}} \langle m, n_{\upsigma \uptau} \rangle [Y(\upsigma)] = (\updelta(m)+m) \cdot [Y(\uptau)] \]
where $\updelta(m) \in A^1 X$ and $m \in A^1_T=M$. Here $n_{\upsigma \uptau} \in \upsigma \cap N$ is any lattice point whose image generates the one-dimensional lattice $N_{\upsigma}/N_{\uptau}$.
\end{itemize}
Then there is a canonical isomorphism of $\Z$-graded $A^\star X \otimes_\Z A^\star_T$-modules:
\[ A_\star^T Y \cong B_\star^T(\Sigma,L,X) \otimes_{A^\star_T X} A_\star^T X.\]
In particular, if $X$ is smooth then there is a canonical isomorphism:
\[ A_\star^T Y \cong B_\star^T(\Sigma,L,X).\]
\end{theorem}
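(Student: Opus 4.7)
The plan is to apply \cite[Theorem~1]{FMSS} to the fibrewise action $T \acts Y$. That theorem presents the equivariant Chow homology of any $T$-scheme as generated by classes of $T$-invariant integral subschemes modulo the relations induced by $T$-eigenfunctions on them. \Cref{prop: invariant subvarieties}, \Cref{lem: shape of eigenfunction} and \Cref{prop: eigenfunctions} have already been engineered to describe both pieces of data in the present setting: the generators are $[Y(\uptau)|_V]$ for $\uptau \in \Sigma$ and $V \subseteq X$ integral, and the eigenfunctions on $Y(\uptau)|_V$ are of the form $p^\star g \cdot t^m$ with $g \in R(V)^\times$ and $m \in M(\uptau)$, whose divisor class is computed explicitly.

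The heart of the argument will be a clean separation of the eigenfunction relations along the two degrees of freedom $(g,m)$. By bilinearity it suffices to treat $g=1$ and $m=0$ separately. The $m=0$ relations reduce to $\sum_i \op{ord}(g,V_i)[Y(\uptau)|_{V_i}] = 0$, which are exactly the relations defining $A_\star^T X$ applied formally to the symbol $[Y(\uptau)]$; these are precisely the relations absorbed by the base change $-\otimes_{A^\star_T X} A_\star^T X$. The $g=1$ relations, combined with the equivariant weight $m \in A^1_T$ carried by the eigenfunction $t^m$ (so that FMSS contributes an extra $m \cdot [Y(\uptau)|_V]$ on the right-hand side), will reproduce exactly the balancing relation
\[ \sum_{\substack{\upsigma \supseteq \uptau \\ \dim \upsigma = \dim \uptau+1}} \langle m, n_{\upsigma\uptau}\rangle [Y(\upsigma)|_V] = (\updelta(m)+m) \cdot [Y(\uptau)|_V] \]
of $B_\star^T(\Sigma,L,X)$, restricted along $V \hookrightarrow X$.

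These observations assemble into a comparison map
\[ \Phi \colon B_\star^T(\Sigma,L,X) \otimes_{A^\star_T X} A_\star^T X \to A_\star^T Y, \qquad [Y(\uptau)] \otimes [V] \mapsto [Y(\uptau)|_V], \]
which I expect to be well-defined and surjective by the above relation analysis and \Cref{prop: invariant subvarieties} respectively, with injectivity amounting to the statement that no further relations appear beyond those already accounted for by the dichotomy. Grading compatibility is automatic from $\dim Y(\uptau) = \dim X + \codim \uptau$. The smooth case will then follow because for $X$ smooth with trivial $T$-action one has $A_\star^T X \cong A^\star_T X$ via Poincar\'e duality, under which the tensor product collapses to $B_\star^T(\Sigma,L,X)$ itself, consistent with the grading convention built into its definition.

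The principal obstacle is the equivariant weight bookkeeping: one must confirm that $t^m$ is a $T$-eigenfunction of weight exactly $m$, so that FMSS produces the extra summand $m \cdot [Y(\uptau)|_V]$ needed to reconcile with the $B_\star^T$-relation. This should follow from the construction of $t^m$ as the pullback of the character $z^m \in R(Z(\uptau))$ under the trivialisation $Y(\uptau)|_U \cong U \times Z(\uptau)$ of \eqref{eqn: eigenfunction trivialisation principal bundle over open set}, though the dependence of that trivialisation on the chosen sections $s_i$ means that a careful audit of the combined $(g,m)$ relation is required to ensure that no hidden coboundary terms spoil the clean separation into horizontal and vertical pieces.
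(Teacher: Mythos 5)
Your proposal is correct and follows essentially the same route as the paper: invoke the generators-and-relations presentation of equivariant Chow homology for a general torus action (the paper cites \cite[Theorem~2.1]{BrionEquivariant} where you cite \cite[Theorem~1]{FMSS}, but both apply), identify the generators via \Cref{prop: invariant subvarieties}, and split the eigenfunction relations via \Cref{lem: shape of eigenfunction} and \Cref{prop: eigenfunctions} into base-change relations ($m=0$) and weighted balancing relations ($g=1$). Your worry about dependence on the chosen sections $s_i$ resolves cleanly because both factors $p^\star g$ and $t^m$ are individually eigenfunctions (of weights $0$ and $m$ respectively), so the submodule of relations generated by all eigenfunctions equals that generated by the two subfamilies regardless of the trivialisation; a different choice of $s_i$ only reshuffles the decomposition of an individual $f$ between $g$ and $t^m$, not the span.
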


\begin{proof} The equivariant homology $A_\star^T Y$ is generated as an $A^\star_T$-module by classes of $T$-invariant integral subschemes \cite[Theorem~2.1]{BrionEquivariant}. By \Cref{prop: invariant subvarieties} these all take the form
\[ [Y(\uptau)|_V]\]
for some $\uptau \in \Sigma$ and integral subscheme $V \subseteq X$. We obtain a bijection of generators:
\begin{align} \label{eqn: bijection of generators}
\nonumber A_\star^T Y & \leftrightarrow B_\star^T(\Sigma,L,X) \otimes_{A^\star_T X} A_\star ^T X \\
[Y(\uptau)|_V] & \leftrightarrow [Y(\uptau)] \otimes [V].
\end{align}
We now show that this respects the relations. By \cite[Theorem~2.1]{BrionEquivariant} the relations in $A_\star^T Y$ are induced by eigenfunctions on invariant subschemes $Y(\uptau)|_V$. By \Cref{lem: shape of eigenfunction} these eigenfunctions take the form 
\[ p_{\uptau}^\star g \cdot t^m \]
for $g \in R(V)$ and $m \in M(\uptau)$. The associated relation is described in \Cref{prop: eigenfunctions}. We conclude that the submodule of relations whose quotient produces $A_\star^T Y$ is generated by two classes of relations. Elements of the first class correspond to $g \in R(V)$ and are given by $p_\uptau^\star \operatorname{div}g$, which under \eqref{eqn: bijection of generators} becomes:
\[ [Y(\uptau)] \otimes [\operatorname{div}g].\]
Elements of the second class correspond to $m \in M(\uptau)$ and are given by the final two terms on the right-hand side of \eqref{eqn: eigenfunction divisor}, which under \eqref{eqn: bijection of generators} become:
\[ \left( \sum_{\substack{\upsigma \in \Sigma, \, \upsigma \supseteq \uptau \\ \dim \upsigma = \dim \uptau+1}} \langle m,n_{\upsigma\uptau}\rangle [Y(\upsigma)] - (p^\star \updelta(m)+m) \cdot [Y(\uptau)] \right) \otimes [V].\]
We see that the bijection of generators \eqref{eqn: bijection of generators} identifies the submodules of relations. This establishes the desired isomorphism. Finally if $X$ is smooth then there is an isomorphism $A_\star^T X \cong A^\star_T X$ of $A^\star_T X$-modules, from which we deduce $A_\star^T Y \cong B_\star^T(\Sigma,L,X)$.
\end{proof}

\begin{theorem}[Presentation of $A_\star Y$] \label{prop: presentation ordinary homology relative} Define $B_\star(\Sigma,L,X)$ to be the following $\Z$-graded module over the ring $A^\star X$:
\begin{itemize}
\item \textbf{Generators.} For each $\uptau \in \Sigma$ a generator
\[ [Y(\uptau)]\]
in homological degree $\operatorname{dim} X + \operatorname{codim} \uptau$.
\item \textbf{Relations.} For each $\uptau \in \Sigma$ and $m \in M(\uptau) \subseteq M$ a relation
\begin{equation} \label{eqn: relation ordinary homology relative} \sum_{\substack{\upsigma \in \Sigma, \, \upsigma \supseteq \uptau \\ \operatorname{dim} \upsigma = \operatorname{dim}\uptau + 1}} \langle m, n_{\upsigma \uptau} \rangle [Y(\upsigma)] = \updelta(m) \cdot [Y(\uptau)] \end{equation}
where $\updelta(m) \in A^1 X$. Here $n_{\upsigma \uptau} \in \upsigma \cap N$ is any lattice point whose image generates the one-dimensional lattice $N_{\upsigma}/N_{\uptau}$.
\end{itemize}
Then there is a canonical isomorphism of $\Z$-graded $A^\star X$-modules:
\[ A_\star Y \cong B_\star(\Sigma,L,X) \otimes_{A^\star X} A_\star X.\]
In particular, if $X$ is smooth then there is a canonical isomorphism:
\[ A_\star Y \cong B_\star(\Sigma,L,X).\]
\end{theorem}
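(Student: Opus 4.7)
The proof will mirror that of \Cref{prop: presentation equivariant homology relative} for the equivariant case, replacing the use of \cite[Theorem~2.1]{BrionEquivariant} by the analogous ordinary result \cite[Theorem~1]{FMSS}. That theorem asserts that for a variety with a torus action, the Chow homology is generated by classes of invariant integral subschemes modulo relations coming from divisors of eigenfunctions on such subschemes.

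The plan is to first set up the bijection of generators. By \Cref{prop: invariant subvarieties}, every $T$-invariant integral subscheme of $Y$ is of the form $Y(\uptau)|_V$ for some $\uptau\in\Sigma$ and integral $V\subseteq X$. Define the candidate isomorphism on generators by
\[ [Y(\uptau)|_V] \longleftrightarrow [Y(\uptau)]\otimes [V] \in B_\star(\Sigma,L,X)\otimes_{A^\star X} A_\star X. \]
Note the degrees match: $[Y(\uptau)|_V]$ lies in homological degree $\dim V + \codim\uptau$, which agrees with $(\dim X + \codim\uptau) + (\dim V - \dim X)$, the degree of $[Y(\uptau)]\otimes[V]$ in the tensor product.

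Next I would identify the submodule of relations under this bijection. By \Cref{lem: shape of eigenfunction}, every eigenfunction on $Y(\uptau)|_V$ has the form $p_\uptau^\star g\cdot t^m$ for $g\in R(V)$ and $m\in M(\uptau)$, and \Cref{prop: eigenfunctions} computes the divisor class:
\[ [\operatorname{div}(p_\uptau^\star g\cdot t^m)] = p_\uptau^\star[\operatorname{div} g] + \Bigg(\sum_{\substack{\upsigma\supseteq\uptau\\ \dim\upsigma=\dim\uptau+1}}\langle m,n_{\upsigma\uptau}\rangle[Y(\upsigma)|_V]\Bigg) - p^\star\updelta(m)\cap [Y(\uptau)|_V]. \]
Under the bijection, the ``base'' piece corresponds to setting $m=0$ and yields $[Y(\uptau)]\otimes[\operatorname{div} g]$, which vanishes in $B_\star(\Sigma,L,X)\otimes_{A^\star X}A_\star X$ because $[\operatorname{div} g]=0$ in $A_\star V \subseteq A_\star X$ (the standard rational equivalence). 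The ``fibre'' piece corresponds to setting $g=1$ and yields exactly the relation \eqref{eqn: relation ordinary homology relative} of $B_\star(\Sigma,L,X)$ tensored with $[V]$. Hence the relations in the two sides match.

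Finally, one must check that these two families of relations generate all relations in the tensor product presentation. The first family captures rational equivalence on the base; combined with the freeness of the generators over fibrewise cone data, a short bookkeeping argument shows that the second family captures all the fibrewise relations after base-change. The main (mild) obstacle is to carry out this bookkeeping cleanly, i.e.\ to verify that the presentation of $B_\star(\Sigma,L,X)$ as an $A^\star X$-module, once tensored with $A_\star X$, yields the same quotient as the FMSS presentation of $A_\star Y$. Once this is checked, the ``In particular'' statement for smooth $X$ follows from the identification $A^\star X\otimes_{A^\star X} A_\star X = A_\star X$ together with Poincar\'e duality making $[X]\in A_\star X$ the unit when pairing with $A^\star X$, so that $B_\star(\Sigma,L,X)\otimes_{A^\star X} A_\star X \cong B_\star(\Sigma,L,X)$ canonically.
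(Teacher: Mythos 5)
Your proof is correct, but it takes a different route from the paper. The paper's proof of this theorem is a single line: it is ``obtained as the non-equivariant limit'' of the equivariant presentation (the preceding theorem). Concretely, the paper relies on the identification $A_\star Y \cong A_\star^T Y \otimes_{A^\star_T} \Z$, which kills the terms $m \in M = A^1_T$ in the equivariant relations and collapses $A_\star^T X$ to $A_\star X$, yielding exactly the stated presentation. You instead rerun the entire argument used for the equivariant theorem from scratch, replacing \cite[Theorem~2.1]{BrionEquivariant} with \cite[Theorem~1]{FMSS}: invariant subschemes $Y(\uptau)|_V$ via \Cref{prop: invariant subvarieties}, eigenfunctions $p_\uptau^\star g \cdot t^m$ via \Cref{lem: shape of eigenfunction}, and the divisor formula of \Cref{prop: eigenfunctions}. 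This is a perfectly valid and self-contained argument; the additivity of $[\operatorname{div}(p_\uptau^\star g \cdot t^m)]$ in $(g,m)$ ensures that the two families of relations you isolate (the $m=0$ family encoding rational equivalence on $X$, and the $g=1$ family encoding the fibrewise relations) generate the full relation submodule, so the ``bookkeeping'' you flag is genuinely mild. The paper's route is shorter because it reuses the equivariant result already in hand; yours is more direct and does not require knowing the structure of the non-equivariant limit map, at the cost of duplicating the generator-and-relation comparison.
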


\begin{proof} This is obtained as the non-equivariant limit of \Cref{prop: presentation equivariant homology relative}. We have an isomorphism of $A^\star X$-modules
\[ A_\star Y \cong A_\star^T Y \otimes_{A^\star_T} \Z \]
where the map $A^\star_T \to \Z$ sends the equivariant weights to zero. Equivalently,
\[ A_\star Y \cong A_\star^T Y / A^{\geq 1}_T \]
which amounts to sending the $m \in A^1_T$ appearing in the relation in \Cref{prop: presentation equivariant homology relative} to zero.
\end{proof}


\section{Minkowski weights} \label{sec: Minkowski weights}

\noindent \textbf{Assumptions in this section:} $X$ smooth, $\Sigma$ complete.

\noindent In this section we describe $A^\star Y$ as a module over $A^\star X$ in terms of homology-valued Minkowski weights. In the next section (\Cref{sec: product rule}) we describe the product structure via a fan displacement rule. This builds on and generalises the description of the Chow cohomology of a toric variety in terms of Minkowski weights \cite{FultonSturmfels}.

\subsection{Minkowski weights}

\begin{definition} A \textbf{Minkowski weight of codimension $k$} on $Y$ is the data of a class
\[ W(\upsigma) \in A_{\dim X + \operatorname{codim} \upsigma - k}(X) \]
for every cone $\upsigma \in \Sigma$. Note that $W(\upsigma) \neq 0$ only if $k - \dim X \leqslant \operatorname{codim}(\upsigma) \leqslant k$, and when $X$ is a point we recover the classical condition $\operatorname{codim}(\upsigma)=k$. These assignments must satisfy the following balancing condition: for every $\uptau \in \Sigma$ and $m \in M(\uptau)$ we have
 \begin{equation} \label{eqn: balancing} \sum_{\substack{\upsigma \in \Sigma, \, \upsigma \supseteq \uptau \\ \dim \upsigma = \dim \uptau+1}} \langle m, n_{\upsigma \uptau} \rangle W(\upsigma) = \updelta(m) \cap W(\uptau) \end{equation}
 where $n_{\upsigma \uptau} \in \upsigma \cap N$ is any lattice point whose image generates the one-dimensional lattice $N_\upsigma/N_\uptau$.
 \end{definition}
 
The Minkowski weights are graded by codimension, and we write the set of all Minkowski weights as:
\[ \on{MW}^\star(\Sigma,L,X)  = \bigoplus_{k \geqslant 0} \on{MW}^k(\Sigma,L,X).\]
This is a $\Z$-graded $A^\star X$-module. Given $c \in A^\star X$ and a Minkowski weight $W \in \on{MW}^\star(\Sigma,L,X)$ the action is defined as follows:
\[ (c \cdot W)(\upsigma) \colonequals c \cap W(\upsigma).\]
This clearly respects the grading, restricting to a map
\[ A^l X \otimes_\Z \on{MW}^k(\Sigma,L,X) \to  \on{MW}^{k+l}(\Sigma,L,X) \]
for all $l,k \in \N$. Given this setup, we arrive at the main result of this section:
\begin{customthm}{A} \label{thm: Minkowski weights} There is a natural isomorphism of $\Z$-graded $A^\star X$-modules:
\begin{align*} A^\star Y & \cong \on{MW}^\star(\Sigma,L,X) \\
\upgamma & \mapsto (p_\star (\upgamma \cap [Y(\upsigma)]))_{\upsigma \in \Sigma}.	
\end{align*}
\end{customthm}

The proof follows the line of argument pursued in \cite{FMSS,FultonSturmfels}. We establish the relative K\"unneth property (\Cref{sec: relative Kunneth}), which we then use to establish relative Kronecker duality (\Cref{sec: relative Kronecker}). Combining relative Kronecker duality with the presentation of $A_\star Y$ given in \Cref{prop: presentation ordinary homology relative} we obtain the result.

\begin{remark} For Minkowski weights of codimension $\dim Y - 1$ (representing curves) the balancing condition \eqref{eqn: balancing} was discovered independently in \cite{DodwellThesis}.
\end{remark}

\begin{remark} \label{rmk: Minkowski weight assumptions sharp} The restriction to smooth bases $X$ is sufficient for applications to enumerative geometry. Strictly speaking, smoothness is used only in the proof of \Cref{lem: Kunneth commuting square}. Elsewhere it is sufficient to assume that $X$ is a Poincar\'e duality variety, i.e. that capping with the fundamental class induces an isomorphism:
\[ A^\star X \xrightarrow{\cong} A_\star X. \]
This latter assumption is sharp: Theorem~\ref{thm: Minkowski weights} fails otherwise, since for the trivial toric variety bundle with fibre a point it precisely states that $X$ is a Poincar\'e duality variety. Examples of singular Poincar\'e duality varieties includes rational curves with unibranch singularities.
\end{remark}

\begin{remark} Theorem~\ref{thm: Minkowski weights} applied to the universal toric variety bundle $[Z/T] \to \Bcal T$ (see \Cref{sec: mixing collection and universal bundle}) describes $A^\star_T Z$ as the $A^\star_T$-module of piecewise polynomial functions. Indeed we have
\[ A_\star(\Bcal T) = A_\star^T = \on{Sym} M \]
and so the weight associated to each maximal cone is simply a polynomial. The balancing condition is then precisely the statement that these polynomials are compatible with restriction to faces: it states that the difference of two adjacent maximal cone weights is divisible by the equation cutting out the common face, and so vanishes upon restriction. This ensures that the polynomials associated to the maximal cones glue to a global piecewise polynomial. \end{remark}

\begin{example} \label{ex: MW} Take $N = \Z^2$ and let $\Sigma$ be the complete fan
\[
\begin{tikzpicture}
	\draw(0,0) [fill=black] circle[radius=2pt];
	\draw[->] (0,0) -- (2,0);
	\draw (2,0) node[right]{$\uptau_1$};
	\draw[->] (0,0) -- (1.4,1.4);
	\draw (1.6,1.6) node{$\uptau_2$};
	\draw[->] (0,0) -- (0,2);
	\draw (0,2) node[above]{$\uptau_3$};
	\draw[->] (0,0) -- (-1.4,-1.4);
	\draw (-1.55,-1.55) node{$\uptau_4$};
	\draw (1.3,0.5) node{$\upsigma_{12}$};
	\draw (0.5,1.3) node{$\upsigma_{23}$};
	\draw (-1.2,0.5) node{$\upsigma_{34}$};
	\draw (0.5,-1) node{$\upsigma_{41}$};
\end{tikzpicture}
\]
so that $Z \cong \FF_1$. Fix a smooth base variety $X$ and an arbitrary mixing collection $L \colon \Z^2 \to \Pic X$ and let $p \colon Y \to X$ be the associated toric variety bundle. Write:
\[  \upalpha_1 \colonequals \operatorname{c}_1 (L(1,0)) \cap [X], \qquad \upalpha_2 \colonequals \operatorname{c}_1 (L(0,1)) \cap [X].\]
Writing $D_i = [Y(\uptau_i)]$ for simplicity, \Cref{prop: presentation ordinary homology relative} gives the following linear relations amongst classes of fibrewise toric divisors in $A_\star Y$:
\begin{align} \label{eqn: MW example SR relations}
\begin{split} D_1 + D_2 - D_4 = p^\star \upalpha_1, \\
D_2 + D_3 - D_4 = p^\star \upalpha_2.
\end{split}
\end{align}
Since $X$ and $Z$ are smooth, $Y$ is also smooth and hence satisfies Poincar\'e duality in Chow \cite[Corollary~17.4(a)]{FultonBig}. We now use Theorem~\ref{thm: Minkowski weights} to calculate the Minkowski weight $W_1$ Poincar\'e dual to $D_1=[Y(\uptau_1)]$. We obtain:
\begin{equation} \label{eqn: MW example 1}
\begin{tikzpicture}[baseline=(current  bounding  box.center)]
	\draw(0,0) [fill=black] circle[radius=2pt];
	\draw (0,0) node[below,blue]{$0$};
	\draw[->] (0,0) -- (2,0);
	\draw (1.9,0) node[right,blue]{$0$};
	\draw[->] (0,0) -- (1.4,1.4);
	\draw (1.65,1.65) node[blue]{$[X]$};
	\draw[->] (0,0) -- (0,2);
	\draw (0,2) node[above,blue]{$0$};
	\draw[->] (0,0) -- (-1.4,-1.4);
	\draw (-1.65,-1.65) node[blue]{$[X]$};
	\draw (1.5,0.5) node[blue]{$\upalpha_1\!-\!\upalpha_2$};
	\draw (0.5,1.3) node[blue]{$0$};
	\draw (-1.2,0.5) node[blue]{$0$};
	\draw (0.5,-1) node[blue]{$\upalpha_1\!-\!\upalpha_2$};
\end{tikzpicture}
\end{equation}
Here each cone $\upsigma \in \Sigma$ is decorated with the corresponding class $W_1(\upsigma) \in A_\star X$. These classes are calculated using the relations \eqref{eqn: MW example SR relations}. We give two sample calculations, starting with $W_1(\uptau_2)$. We have $[Y(\uptau_2)]=D_2$ and thus:
\[ W_1(\uptau_2) = p_\star(D_1 \cdot D_2) = [X].\]
We now calculate $W_1(\upsigma_{12})$. From \eqref{eqn: MW example SR relations} we obtain
\begin{align*}
& D_1^2 = D_1 \cdot (D_4-D_2+p^\star \upalpha_1) = D_1 \cdot (D_3 + p^\star \upalpha_1 - p^\star \upalpha_2) = D_1 \cdot (p^\star \upalpha_1 - p^\star \upalpha_2) \\
\Rightarrow \ & D_1^2 D_2 = D_1 D_2 \cdot (p^\star \upalpha_1 - p^\star \upalpha_2)
\end{align*}
from which we conclude
\[ W_1(\upsigma_{12}) = p_\star(D_1 \cdot D_1 D_2) = \upalpha_1-\upalpha_2. \]
Similarly, we calculate the Minkowski weight $W_2$ Poincar\'e dual to $D_2=[Y(\uptau_2)]$ as:
\begin{equation} \label{eqn: MW example 2}
\begin{tikzpicture}[baseline=(current  bounding  box.center)]
	\draw(0,0) [fill=black] circle[radius=2pt];
	\draw (0,0) node[below,blue]{$0$};
	\draw[->] (0,0) -- (2,0);
	\draw (2,0) node[right,blue]{$[X]$};
	\draw[->] (0,0) -- (1.4,1.4);
	\draw (1.6,1.6) node[blue]{$-[X]$};
	\draw[->] (0,0) -- (0,2);
	\draw (0,2) node[above,blue]{$[X]$};
	\draw[->] (0,0) -- (-1.4,-1.4);
	\draw (-1.55,-1.55) node[blue]{$0$};
	\draw (1.3,0.5) node[blue]{$\upalpha_2$};
	\draw (0.5,1.3) node[blue]{$\upalpha_1$};
	\draw (-1.2,0.5) node[blue]{$0$};
	\draw (0.5,-1) node[blue]{$0$};
\end{tikzpicture}
\end{equation}
Finally, we illustrate the balancing condition \eqref{eqn: balancing} for $W_2$. Take:
\[ \uptau = \uptau_2, \qquad m = (1,-1) \in M(\uptau) = \uptau_2^\perp \cap M.\]
The left-hand side of \eqref{eqn: balancing} is then a sum over $\upsigma_{12}$ and $\upsigma_{23}$. We have:
\begin{align*}
n_{\upsigma_{12} \uptau_2} = \begin{psmallmatrix} 1 \\ 0 \end{psmallmatrix} & \Rightarrow \langle m, n_{\upsigma_{12} \uptau_2} \rangle W_2(\upsigma_{12}) = \upalpha_2,\\
n_{\upsigma_{23} \uptau_2} = \begin{psmallmatrix} 0 \\ 1 \end{psmallmatrix} & \Rightarrow \langle m, n_{\upsigma_{23} \uptau_2} \rangle W_2(\upsigma_{23}) = -\upalpha_1.
\end{align*}
We conclude that the left-hand side of \eqref{eqn: balancing} is $\upalpha_2-\upalpha_1$. The right-hand side is given by
\[ \updelta(1,-1) \cap W_2(\uptau_2) = - \updelta(1,-1) \cap [X] = \upalpha_2 - \upalpha_1 \]
which verifies the balancing condition.
\end{example}
 
\subsection{Relative K\"unneth property} \label{sec: relative Kunneth}  
The product of a toric variety with an arbitrary variety satisfies a K\"unneth property in Chow homology \cite[Theorem~2]{FMSS}. We establish the analogue of this fact for toric variety bundles.

\begin{theorem} \label{thm: relative Kunneth} Given a morphism $f \colon X^\prime \to X$ form the fibre product:
\[
\begin{tikzcd}
Y^\prime \ar[r,"f^\prime"] \ar[d] \ar[rd,phantom,"\square"] & Y \ar[d] \\
X^\prime \ar[r,"f"] & X.	
\end{tikzcd}
\]
There is then a canonical isomorphism of $\Z$-graded $A^\star X^\prime$-modules
\[ A_\star X^\prime \otimes_{A^\star X} A_\star Y \cong A_\star Y^\prime . \]
\end{theorem}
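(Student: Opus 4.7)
The plan is to leverage the module presentation from \Cref{prop: presentation ordinary homology relative}, exploiting the observation that only the mixing collection depends on the base and that this data pulls back cleanly along $f$. First, I would identify $Y' = X' \times_X Y \to X'$ as itself a toric variety bundle, with fibre fan $\Sigma$ and mixing collection $f^\star L$. This follows from the universal property of $[Z/T] \to \Bcal T$ recalled in \Cref{sec: mixing collection and universal bundle}: the classifying map for $Y'$ is the composite $X' \xrightarrow{f} X \xrightarrow{L} \Bcal T$, so its mixing collection is $f^\star L$.

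Next I would apply \Cref{prop: presentation ordinary homology relative} to both bundles. Since $X$ is smooth, this yields $A_\star Y \cong B_\star(\Sigma, L, X)$. For $Y' \to X'$, where $X'$ may be singular, we obtain only
\[ A_\star Y' \cong B_\star(\Sigma, f^\star L, X') \otimes_{A^\star X'} A_\star X'. \]
The key observation, visible from the generators-and-relations definition of $B_\star$, is that there is a canonical isomorphism of $A^\star X'$-modules
\[ B_\star(\Sigma, f^\star L, X') \cong B_\star(\Sigma, L, X) \otimes_{A^\star X} A^\star X', \]
because the generators depend only on $\Sigma$, while base-changing the relations along $f^\star \colon A^\star X \to A^\star X'$ replaces $\updelta$ with $f^\star \updelta$; by naturality of the first Chern class this coincides with the map associated to $f^\star L$. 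Chaining these together produces
\[ A_\star Y' \cong B_\star(\Sigma, L, X) \otimes_{A^\star X} A^\star X' \otimes_{A^\star X'} A_\star X' \cong A_\star Y \otimes_{A^\star X} A_\star X', \]
using associativity of the tensor product and the identification $A^\star X' \otimes_{A^\star X'} A_\star X' \cong A_\star X'$.

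The hard part will be verifying that this abstract isomorphism agrees with the expected \emph{canonical} map. A natural candidate is the composite of the exterior product $A_\star X' \otimes A_\star Y \to A_\star(X' \times Y)$ with the Gysin pullback $\Delta_X^! \colon A_\star(X' \times Y) \to A_\star Y'$ along the diagonal $\Delta_X \colon X \hookrightarrow X \times X$ (regularly embedded by smoothness of $X$), pulled back via $(f, p) \colon X' \times Y \to X \times X$. To confirm agreement, I would trace the presentation-theoretic isomorphism on the distinguished generators $[V'] \otimes [Y(\uptau)]$ for $V' \subseteq X'$ integral: these should land in $[Y'(\uptau)|_{V'}] = [Y(\uptau) \times_X V']$, which matches the Gysin pullback via compatibility of $\Delta_X^!$ with exterior products together with the base-change identity $Y'(\uptau) = Y(\uptau) \times_X X'$. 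The descent of this pairing to the tensor product over $A^\star X$ then follows from the projection formula, and tracking the codimension $\dim X$ shift from $\Delta_X^!$ confirms compatibility with the $\Z$-grading.
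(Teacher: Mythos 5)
Your proof is correct and takes essentially the same route as the paper: identify $Y'\to X'$ as a toric variety bundle with mixing collection $f^\star\circ L$, apply \Cref{prop: presentation ordinary homology relative} to both bundles using smoothness of $X$ on the source side, observe that $B_\star(\Sigma, f^\star L, X') \cong B_\star(\Sigma, L, X)\otimes_{A^\star X} A^\star X'$, and chain the tensor identifications. Your closing remarks about verifying the isomorphism on generators and exhibiting it as a Gysin pullback $\Delta_X^!$ go beyond what the paper does in this proof, but they correctly anticipate the content of \Cref{lem: Kunneth on effective classes} and \Cref{lem: Kunneth commuting square}, which are where those identifications are actually carried out.
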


\begin{proof} The pullback $Y^\prime \to X^\prime$ is also a toric variety bundle, with fibre fan $\Sigma$ and mixing collection
\[ L^\prime \colon M \xrightarrow{L} \Pic X \xrightarrow{f^\star} \Pic X^\prime \]
from which we obtain $\updelta^\prime (m) = f^\star \updelta(m)$ for $m \in M$. The theorem then follows from \Cref{prop: presentation ordinary homology relative}. Since $\updelta^\prime = f^\star \circ \updelta$ we have
\[ B_\star(\Sigma,L^\prime,X^\prime) \cong B_\star(\Sigma,L,X) \otimes_{A^\star X} A^\star X^\prime.\]
Moreover since $X$ is smooth we have $A_\star Y \cong B_\star(\Sigma,L,X)$. We then compute:
\begin{align*} A_\star X^\prime  \otimes_{A^\star X} A_\star Y  & = A_\star X^\prime \otimes_{A^\star X} B_\star(\Sigma,L,X) \\
& = \big( A_\star X^\prime \otimes_{A^\star X^\prime} A^\star X^\prime \big) \otimes_{A^\star X} B_\star(\Sigma,L,X) \\
& = A_\star X^\prime \otimes_{A^\star X^\prime} \big( A^\star X^\prime \otimes_{A^\star X} B_\star(\Sigma,L,X) \big) \\
& = A_\star X^\prime \otimes_{A^\star X^\prime} B_\star(\Sigma,L^\prime,X^\prime) \\
& = A_\star Y^\prime.
\end{align*}
Note that we do not require $X^\prime$ to be smooth.
\end{proof}

We now describe the relative K\"unneth isomorphism via its action on generators.
\begin{lemma} \label{lem: Kunneth on effective classes}
Consider classes $[V] \in A_\star X^\prime$ and $[Y(\upsigma)] \in A_\star Y$ where $V \subseteq X^\prime$ is an integral subscheme and $\upsigma \in \Sigma$ is a cone. Then under the canonical K\"unneth isomorphism of \Cref{thm: relative Kunneth} we have:
\[ [V] \otimes [Y(\upsigma)] \mapsto [Y^\prime(\upsigma)|_V] = [V \times_X Y(\upsigma)]. \]	
\end{lemma}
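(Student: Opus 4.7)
The plan is to unwind the chain of canonical identifications used in the proof of \Cref{thm: relative Kunneth} and track the pure tensor $[V] \otimes [Y(\upsigma)]$ step by step. No analytic estimate or nontrivial construction is involved; the statement is essentially a compatibility check for the K\"unneth isomorphism on the preferred generators of each module.

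First I would use that $X$ is smooth to identify $A_\star Y \cong B_\star(\Sigma,L,X)$ via \Cref{prop: presentation ordinary homology relative}. Specialising the bijection of generators \eqref{eqn: bijection of generators} to $V = X$ shows that the class $[Y(\upsigma)] \in A_\star Y$ corresponds to the abstract generator $[Y(\upsigma)] \in B_\star(\Sigma,L,X)$. Our element thus becomes $[V] \otimes [Y(\upsigma)]$ inside $A_\star X' \otimes_{A^\star X} B_\star(\Sigma,L,X)$, where now the second factor is the abstract generator.

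Next I would apply the base-change isomorphism $B_\star(\Sigma,L,X) \otimes_{A^\star X} A^\star X' \cong B_\star(\Sigma,L',X')$ established in the proof of \Cref{thm: relative Kunneth}. At the level of presentations this identification is tautological: both sides have one generator per cone, and the relations match cone-by-cone since $\updelta' = f^\star \circ \updelta$. Hence the abstract $[Y(\upsigma)]$ is sent to the abstract $[Y'(\upsigma)]$, and our element becomes $[V] \otimes [Y'(\upsigma)]$ inside $A_\star X' \otimes_{A^\star X'} B_\star(\Sigma,L',X')$.

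Finally I would apply \Cref{prop: presentation ordinary homology relative} to the bundle $Y' \to X'$ in its general form (which does not require $X'$ to be smooth). The bijection \eqref{eqn: bijection of generators} for this bundle sends $[Y'(\upsigma)] \otimes [V] \mapsto [Y'(\upsigma)|_V]$. The remaining equality $[Y'(\upsigma)|_V] = [V \times_X Y(\upsigma)]$ is a one-line fibre-product chase: since $Y' = Y \times_X X'$ we have $Y'(\upsigma) = Y(\upsigma) \times_X X'$, and therefore $Y'(\upsigma)|_V = Y'(\upsigma) \times_{X'} V = Y(\upsigma) \times_X V$. The only point requiring genuine care is keeping straight which intermediate module hosts each incarnation of the generator and verifying that the base-change identification preserves the preferred generators; everything else is formal.
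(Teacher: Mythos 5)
Your argument is correct and is essentially the paper's own proof spelled out in detail: the paper simply asserts that the claim "follows immediately from the presentation of $A_\star Y$ given in \Cref{prop: presentation ordinary homology relative} and used in the proof of \Cref{thm: relative Kunneth}," and your chain of tensor identifications, tracking the generator through the bijection \eqref{eqn: bijection of generators}, the base-change isomorphism $B_\star(\Sigma,L,X) \otimes_{A^\star X} A^\star X' \cong B_\star(\Sigma,L',X')$, and the final fibre-product chase, is precisely that unwinding.
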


\begin{proof} This follows immediately from the presentation of $A_\star Y$ given in \Cref{prop: presentation ordinary homology relative} and used in the proof of \Cref{thm: relative Kunneth}.
\end{proof}

\begin{lemma} \label{lem: Kunneth commuting square} The K\"unneth isomorphism is $A^\star Y$-linear: given $\upgamma \in A^k Y$ the following square commutes
\begin{equation} \label{eqn: proof of Kronecker commuting square}
\begin{tikzcd}
A_m Y^\prime \ar[r,"\cong"] \ar[d,"f^{\prime \star} \upgamma \, \cap \, - " swap] & (A_\star X^\prime \otimes_{A^\star X} A_\star Y)_m \ar[d,"\operatorname{Id} \otimes (\upgamma \, \cap \, -)"] \\
A_{m-k} Y^\prime \ar[r,"\cong"] & (A_\star X^\prime \otimes_{A^\star X} A_\star Y)_{m-k}
\end{tikzcd}
\end{equation}
where the horizontal arrows are the canonical isomorphisms given in \Cref{thm: relative Kunneth}.
\end{lemma}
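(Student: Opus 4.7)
The plan is to use $A^\star X^\prime$-linearity to reduce to a check on $\Z$-module generators of $A_m Y^\prime$, exploit the presentation of \Cref{prop: presentation ordinary homology relative}, and then verify the resulting identity in $A_\star Y^\prime$ via the bivariant projection formula.

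First I would observe that all four arrows in \eqref{eqn: proof of Kronecker commuting square} are $A^\star X^\prime$-linear: the horizontal isomorphisms by \Cref{thm: relative Kunneth}, the right vertical arrow tautologically, and the left vertical arrow because $A^\star Y^\prime$ is a commutative graded ring in which the image of $p^{\prime\star} \colon A^\star X^\prime \to A^\star Y^\prime$ commutes with $f^{\prime\star}\upgamma$. Hence it suffices to verify commutativity on a set of $\Z$-module generators of $A_m Y^\prime$, which by \Cref{prop: invariant subvarieties} is given by the classes $[Y^\prime(\upsigma)|_V]$ with $\upsigma \in \Sigma$ and $V \subseteq X^\prime$ integral.

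Next I would trace such a generator through the top-right path. By \Cref{lem: Kunneth on effective classes} it maps to $[V] \otimes [Y(\upsigma)]$, and thence to $[V] \otimes (\upgamma \cap [Y(\upsigma)])$. Invoking \Cref{prop: presentation ordinary homology relative}, I choose a decomposition
\[
\upgamma \cap [Y(\upsigma)] \;=\; \sum_j p^\star c_j \cap [Y(\uptau_j)]
\]
in $A_\star Y$, with $c_j \in A^\star X$ and $\uptau_j \in \Sigma$. Sliding base classes across the tensor over $A^\star X$ gives
\[
[V] \otimes (\upgamma \cap [Y(\upsigma)]) \;=\; \sum_j (f^\star c_j \cap [V]) \otimes [Y(\uptau_j)],
\]
and by $A^\star X^\prime$-linearity of K\"unneth together with \Cref{lem: Kunneth on effective classes} this corresponds to $\sum_j f^{\prime\star} p^\star c_j \cap [Y^\prime(\uptau_j)|_V]$ in $A_\star Y^\prime$.

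The main obstacle is then verifying the identity
\[
f^{\prime\star}\upgamma \cap [Y^\prime(\upsigma)|_V] \;=\; \sum_j f^{\prime\star} p^\star c_j \cap [Y^\prime(\uptau_j)|_V]
\]
in $A_\star Y^\prime$. To prove it I would push both sides through the proper closed inclusion $\iota \colon Y^\prime(\upsigma)|_V \hookrightarrow Y^\prime$ and factor $f^\prime \circ \iota = i_\upsigma \circ h$, where $i_\upsigma \colon Y(\upsigma) \hookrightarrow Y$ is the fibrewise stratum inclusion and $h \colon Y^\prime(\upsigma)|_V \to Y(\upsigma)$ is the induced base-change morphism. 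The bivariant projection formula \cite[Chapter~17]{FultonBig} rewrites the left side as $\iota_\star(h^\star i_\upsigma^\star \upgamma \cap [Y^\prime(\upsigma)|_V])$, and an analogous manipulation applies to each summand on the right. This reduces matters to the restricted identity $i_\upsigma^\star \upgamma \cap [Y(\upsigma)] = \sum_j p_\upsigma^\star c_j \cap [Y(\uptau_j)]$ inside $A_\star Y(\upsigma)$, which follows from the chosen decomposition by pulling back along the proper closed inclusion $i_\upsigma$. Transporting this equality across the base-change square to $A_\star(Y^\prime(\upsigma)|_V)$ uses the flatness of the toric variety bundle projection $p_\upsigma \colon Y(\upsigma) \to X$ combined with the compatibility of flat pullback and bivariant cap products. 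Independence of the chosen decomposition follows because any two decompositions differ by a relation from \Cref{prop: presentation ordinary homology relative}, whose image in $A_\star Y^\prime$ is itself a relation by the same analysis.
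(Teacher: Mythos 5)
Your reduction to generators $[Y^\prime(\upsigma)|_V]$ via $A^\star X^\prime$-linearity, and your computation of the top-right path of the square — decomposing $\upgamma \cap [Y(\upsigma)] = \sum_j p^\star c_j \cap [Y(\uptau_j)]$ and sliding the coefficients across the tensor to land on $\sum_j f^{\prime\star}p^\star c_j \cap [Y^\prime(\uptau_j)|_V]$ — are both correct and match the structure of the paper's argument. You also correctly identify the crux: proving $f^{\prime\star}\upgamma \cap [Y^\prime(\upsigma)|_V] = \sum_j f^{\prime\star}p^\star c_j \cap [Y^\prime(\uptau_j)|_V]$ in $A_\star Y^\prime$.

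The proof of that crux, however, has a genuine gap. You claim to "push both sides through the proper closed inclusion $\iota \colon Y^\prime(\upsigma)|_V \hookrightarrow Y^\prime$" and then reduce to an identity "$i_\upsigma^\star\upgamma \cap [Y(\upsigma)] = \sum_j p_\upsigma^\star c_j \cap [Y(\uptau_j)]$ inside $A_\star Y(\upsigma)$ \ldots by pulling back along the proper closed inclusion $i_\upsigma$." Two problems: (a) Chow homology admits no pullback along a non-flat closed inclusion, so the phrase "pulling back along $i_\upsigma$" denotes no operation — and the projection formula only converts a single term $\upgamma \cap \iota_\star\alpha$ into $\iota_\star(\iota^\star\upgamma \cap \alpha)$, it does not let you factor an entire identity through $\iota_\star$ when the two sides are supported on different subschemes. (b) The classes $[Y(\uptau_j)]$ coming from an arbitrary presentation in $A_\star Y$ are supported on $Y(\uptau_j)$, which lies in $Y(\upsigma)$ only when $\uptau_j \supseteq \upsigma$; you have no control over which $\uptau_j$ appear, so the putative identity in $A_\star Y(\upsigma)$ is not even type-correct. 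A further issue: the "transport across the base-change square" step invokes flatness of $p_\upsigma \colon Y(\upsigma) \to X$, but the map along which you actually need to move classes is $h \colon Y^\prime(\upsigma)|_V \to Y(\upsigma)$, which is the base change of the (generically non-flat) composite $V \hookrightarrow X^\prime \to X$ along $p_\upsigma$, not of $p_\upsigma$ itself — so flat pullback does not apply.

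The paper closes this gap by a different mechanism that explicitly uses the smoothness of $X$. It considers the regular embedding $s = \operatorname{Id} \times f \colon X^\prime \hookrightarrow X^\prime \times X$ and the cartesian square with $Y^\prime|_V \hookrightarrow V \times Y \to Y$, deduces $[Y^\prime(\upsigma)|_V] = s^! u^\star [Y(\upsigma)]$ because $Y(\upsigma) \to X$ is flat (so the fibre product has expected dimension), and then uses compatibility of the cap product with the refined Gysin $s^!$ and the flat pullback $u^\star$ to obtain $f^{\prime\star}\upgamma \cap [Y^\prime(\upsigma)|_V] = s^! u^\star(\upgamma \cap [Y(\upsigma)])$. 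The operator $s^! u^\star \colon A_\star Y \to A_\star Y^\prime|_V$ is a genuine well-defined transport map, so the decomposition in $A_\star Y$ can be carried across term by term regardless of how the supports of the $[Y(\uptau_j)]$ sit relative to $Y(\upsigma)$. Your proof needs to be replaced by something of this shape: you need a single transport map from $A_\star Y$ to $A_\star Y^\prime$ (or $A_\star Y^\prime|_V$) that is compatible with the cap products, and the refined Gysin for the graph-type regular embedding is what supplies it.
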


\begin{proof} 
Recall that $p^\prime \colon Y^\prime \to X^\prime$ is a toric variety bundle, with fibre fan $\Sigma$ and mixing collection $L^\prime = f^\star \circ L$. It suffices to prove commutativity for the generators
\[ [Y^\prime(\upsigma)|_V] \in A_m Y^\prime \]
where $\upsigma \in \Sigma$ and $V \subseteq X^\prime$ is an integral subscheme. Consider the diagram
\[
\begin{tikzcd}
Y^\prime|_V \ar[r,hook,"t"] \ar[d] \ar[rd,phantom,"\square"] & V \times Y \ar[r,"u"] \ar[d] & Y \\
X^\prime \ar[r,hook,"s"] & X^\prime \times X
\end{tikzcd}
\]
where $s= \operatorname{Id} \times f$. This is a regular embedding since $X$ is smooth \cite[B.7.3]{FultonBig}. The fibre product $Y^\prime(\upsigma)|_V = V \times_{X^\prime} Y(\upsigma)$ is of the expected dimension because $Y(\upsigma) \to X$ is flat, and it follows that:
\[ [Y^\prime(\upsigma)|_V] = s^! [V \times Y(\upsigma)] = s^! u^\star [Y(\upsigma)]. \]
Combining this with the compatibility of cap product with Gysin pullback \cite[Definition 17.1(C1)]{FultonBig} we obtain:
\[ f^{\prime\star} \upgamma \cap [Y^\prime(\upsigma)|_V] = t^\star u^\star \upgamma \cap s^! u^\star [Y(\upsigma)] = s^! u^\star (\upgamma \cap [Y(\upsigma)]).	\]
Since $X$ is smooth, \Cref{prop: presentation ordinary homology relative} gives
\[ \upgamma \cap [Y(\upsigma)] = \sum_{\uptau \in \Sigma} p^\star c_\uptau \cap [Y(\uptau)]\]
for some $c_\uptau \in A^\star X$. We then have
\[ s^! u^\star (\upgamma \cap [Y(\upsigma)]) = \sum_{\uptau \in \Sigma} t^\star u^\star p^\star c_\uptau \cap s^! [V \times Y(\uptau)] = \sum_{\uptau \in \Sigma} t^\star u^\star p^\star c_\uptau \cap [Y^\prime(\uptau)|_V]. \]
From \Cref{lem: Kunneth on effective classes}, it follows that under the isomorphism $A_\star Y^\prime \cong A_\star X^\prime \otimes_{A^\star X} A_\star Y$, the class $[Y^\prime(\uptau)|_V]$ corresponds to $[V] \otimes [Y(\uptau)]$. We conclude that under this isomorphism we have
\[ f^{\prime\star} \upgamma \cap [Y^\prime(\upsigma)|_V] = [V] \otimes \left( \sum_{\uptau \in \Sigma} p^\star c_\uptau \cap [Y(\uptau)] \right) = [V] \otimes (\upgamma \cap [Y(\upsigma)]) \]
and therefore \eqref{eqn: proof of Kronecker commuting square} commutes.
\end{proof}

\subsection{Relative Kronecker duality} \label{sec: relative Kronecker} The Chow theory of a complete toric variety satisfies Kronecker duality \cite[Theorem~3]{FMSS}. We now use the relative K\"unneth property to establish relative Kronecker duality:
\begin{theorem} \label{thm: relative Kronecker} The relative Kronecker duality map
\begin{align} \label{eqn: Kronecker duality map}
A^\star Y & \to \operatorname{Hom}_{A^\star X} (A_\star Y, A_\star X) \\
\nonumber \upgamma & \mapsto \left( h_\upgamma \colon V \mapsto p_\star(\upgamma \cap V) \right)
\end{align}
is an isomorphism of $\Z$-graded $A^\star X$-modules.
\end{theorem}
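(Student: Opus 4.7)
The plan is to construct an explicit inverse to the Kronecker map \eqref{eqn: Kronecker duality map}, using the relative K\"unneth property (\Cref{thm: relative Kunneth}) as the key input and following the blueprint of \cite[Theorem~3]{FMSS}. Well-definedness and $A^\star X$-linearity of the Kronecker map are immediate consequences of the projection formula: for $\upgamma \in A^\star Y$, $c \in A^\star X$, and $V \in A_\star Y$ one computes $h_\upgamma(c \cap V) = p_\star(\upgamma \cap p^\star c \cap V) = c \cap p_\star(\upgamma \cap V) = c \cap h_\upgamma(V)$, and the same identity (applied to $p^\star c \cdot \upgamma$) shows that $\upgamma \mapsto h_\upgamma$ is $A^\star X$-linear.

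For the inverse, I will start with a homogeneous map $\varphi \in \operatorname{Hom}_{A^\star X}(A_\star Y, A_\star X)$ of degree $-k$. For each morphism $g \colon W \to Y$, set $W^\prime \colonequals W \times_X Y$, and let $\gamma_g \colon W \to W^\prime$ be the graph section $(\op{id}_W, g)$ — this is a closed embedding, since $Y \to X$ (and hence $W^\prime \to W$) is separated. Combining relative K\"unneth with the smoothness of $X$ (which provides $A_\star X \cong A^\star X$ and hence $A_\star W \otimes_{A^\star X} A_\star X \cong A_\star W$) yields a composite
\[ \Phi_W \colon A_\star W^\prime \xrightarrow{\ \cong \ } A_\star W \otimes_{A^\star X} A_\star Y \xrightarrow{\ \op{id} \otimes \varphi \ } A_\star W \otimes_{A^\star X} A_\star X \xrightarrow{\ \cong \ } A_\star W. \]
I then define the candidate bivariant class $\upgamma_\varphi \in A^k Y$ by the rule $(\upgamma_\varphi \cap V)_g \colonequals \Phi_W\bigl((\gamma_g)_\star V\bigr)$ for each $g$ and $V \in A_\star W$.

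The remaining work is twofold: (a) verify that $\upgamma_\varphi$ satisfies the bivariant axioms of \cite[Chapter~17]{FultonBig}, and (b) verify that $\upgamma \mapsto h_\upgamma$ and $\varphi \mapsto \upgamma_\varphi$ are mutually inverse. Axioms $(C_1)$ and $(C_2)$ — compatibility with proper pushforward and flat pullback — follow from naturality of the relative K\"unneth isomorphism in the base $W$, traceable on generators via \Cref{lem: Kunneth on effective classes}. Axiom $(C_3)$, compatibility with refined Gysin pullback along regular embeddings, is precisely encoded by \Cref{lem: Kunneth commuting square}, where smoothness of $X$ enters critically. For the inversion itself, I would specialise to $g = \op{id}_Y$ and evaluate both compositions on the generators $[Y(\uptau)] \in A_\star Y$ furnished by \Cref{prop: presentation ordinary homology relative}; identifying $(\gamma_{\op{id}_Y})_\star[Y(\uptau)]$ on the K\"unneth side via \Cref{lem: Kunneth on effective classes} reduces each composition to a tautology.

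The hard part will be the clean verification of axiom $(C_3)$, since naturality of K\"unneth with respect to refined Gysin pullbacks is not automatic and must be traced through the module-theoretic presentation — but this is exactly the content of \Cref{lem: Kunneth commuting square}, which is already available. Once that is in hand, the remainder of the proof is formal bookkeeping against \Cref{prop: presentation ordinary homology relative}.
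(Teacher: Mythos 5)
Your overall strategy — projection formula for linearity, construction of the inverse via relative K\"unneth and the graph section, then checking the two composites — matches the paper's proof. The construction of $\upgamma_\varphi$ from $\varphi$ via $\Phi_W$ and the graph $\gamma_g$ is exactly the paper's construction of $\upgamma_h$ from $h$ via $\Gamma_f$.

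The gap is in your plan for verifying that the two maps are mutually inverse. You propose to ``specialise to $g = \op{id}_Y$ and evaluate both compositions on the generators $[Y(\uptau)]$.'' This suffices for the composite $\operatorname{Hom}_{A^\star X}(A_\star Y, A_\star X) \to A^k Y \to \operatorname{Hom}_{A^\star X}(A_\star Y, A_\star X)$, since an element of the $\operatorname{Hom}$-module is determined by its action on $A_\star Y$. It does \emph{not} suffice for the composite $A^k Y \to \operatorname{Hom}_{A^\star X}(A_\star Y, A_\star X) \to A^k Y$: an operational class such as $\upgamma_{h_\upgamma} \in A^k Y$ is a family of compatible operations on $A_\star X'$ indexed by \emph{all} morphisms $f \colon X' \to Y$, and two bivariant classes that agree for $f = \op{id}_Y$ (i.e.\ agree as endomorphisms of $A_\star Y$) need not coincide. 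Checking only $g = \op{id}_Y$ would establish injectivity of the map $A^\star Y \to \operatorname{End}(A_\star Y)$, which is in effect the statement you are trying to prove, so the argument would be circular. The paper's \Cref{lem: Kronecker proof first inverse} accordingly keeps $f \colon X' \to Y$ general, factors $f^\star \upgamma \cap V$ through the graph $\Gamma_f \colon X' \hookrightarrow Y' = X' \times_X Y$, and invokes \Cref{lem: Kunneth commuting square} to refactor the reconstructed homomorphism as $p'_\star(f'^\star \upgamma \cap \Gamma_{f\star} V)$ — this is a genuine computation, not a tautology checked on generators.

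A smaller inaccuracy: \Cref{lem: Kunneth commuting square} does not encode bivariant axiom $(C_3)$ for $\upgamma_\varphi$. It states that the K\"unneth isomorphism intertwines cap product by a bivariant class $\upgamma \in A^\star Y$ with $\operatorname{Id} \otimes (\upgamma \cap -)$, and in the paper it is deployed in \Cref{lem: Kronecker proof first inverse}, not in verifying the operational axioms. Checking $(C_1)$--$(C_3)$ for $\upgamma_\varphi$ requires naturality of the relative K\"unneth isomorphism in the base $W$ with respect to proper pushforward, flat pullback, and refined Gysin maps — a point the paper (following \cite{FMSS}) leaves implicit, and which would need its own argument.
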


\begin{lemma} \label{lem: Kronecker duality is linear} The relative Kronecker duality map is $A^\star X$-linear and respects the $\Z$-grading. \end{lemma}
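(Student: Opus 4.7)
The plan is to unwind definitions and reduce both claims to the projection formula $p_\star(p^\star c \cap \alpha) = c \cap p_\star \alpha$ for $c \in A^\star X$, $\alpha \in A_\star Y$, together with the standard commutativity of iterated cap products. No substantive obstacle is anticipated; this lemma is purely formal and merely sets the stage for establishing that the Kronecker duality map is an isomorphism in \Cref{thm: relative Kronecker}.

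First I would handle the grading. For $\upgamma \in A^k Y$ and $V \in A_m Y$, the class $\upgamma \cap V$ lies in $A_{m-k} Y$, and since $p_\star$ preserves homological degree we have $h_\upgamma(V) \in A_{m-k} X$. Thus $h_\upgamma$ is a map of degree $k$ with respect to the natural $\Z$-grading on $\operatorname{Hom}_{A^\star X}(A_\star Y, A_\star X)$, where a degree-$k$ homomorphism is one that lowers homological degree by $k$.

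For $A^\star X$-linearity I would split the verification into two parts. The first is that $h_\upgamma$ actually lands in $\operatorname{Hom}_{A^\star X}(A_\star Y, A_\star X)$ rather than merely in $\operatorname{Hom}_\Z$: for $c \in A^\star X$ and $V \in A_\star Y$,
\[ h_\upgamma(c \cdot V) = p_\star\bigl(\upgamma \cap (p^\star c \cap V)\bigr) = p_\star\bigl(p^\star c \cap (\upgamma \cap V)\bigr) = c \cap p_\star(\upgamma \cap V) = c \cdot h_\upgamma(V), \]
using commutativity of cap products followed by the projection formula. The second is that the assignment $\upgamma \mapsto h_\upgamma$ is itself $A^\star X$-linear. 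Additivity in $\upgamma$ is immediate from bilinearity of cap product. Recalling that the $A^\star X$-module structure on $A^\star Y$ is induced by $p^\star$, for $c \in A^\star X$ we compute
\[ h_{p^\star c \cdot \upgamma}(V) = p_\star\bigl((p^\star c \cdot \upgamma) \cap V\bigr) = p_\star\bigl(p^\star c \cap (\upgamma \cap V)\bigr) = c \cap p_\star(\upgamma \cap V) = (c \cdot h_\upgamma)(V), \]
again by the projection formula, which gives the claim.
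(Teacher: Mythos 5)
Your proof is correct and follows essentially the same route as the paper's: both reduce everything to the projection formula $p_\star(p^\star c \cap \alpha) = c \cap p_\star\alpha$ and commutativity of iterated cap products. The only difference is that you separately verify that $h_\upgamma$ actually lands in $\operatorname{Hom}_{A^\star X}(A_\star Y, A_\star X)$ rather than merely $\operatorname{Hom}_\Z$, a point the paper leaves implicit (it records only the linearity of $\upgamma \mapsto h_\upgamma$); this extra check is a slight improvement in rigour rather than a different approach.
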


\begin{proof} Given $c \in A^\star X$, $\upgamma \in A^\star Y$ and $V \in A_\star Y$ we have:
\[ p_\star( ( p^\star c \cdot \upgamma) \cap V) = p_\star ( p^\star c \cap (\upgamma \cap V)) = c \cap p_\star( \upgamma \cap V) \]
which precisely shows that the Kronecker duality map is $A^\star X$-linear. It is clear that it respects the $\Z$-grading, where the decomposition of the right hand side into graded pieces is given by
\[ \operatorname{Hom}_{A^\star X} (A_\star Y, A_\star X) = \bigoplus_{k \geqslant 0} \operatorname{Hom}_{A^\star X} (A_\star Y, A_{\star-k} X). \qedhere \]
\end{proof}

We now show that \eqref{eqn: Kronecker duality map} is an isomorphism. As in the proof of the absolute case, the idea is to construct an explicit inverse. Fix a homogeneous element
\[ h \in \operatorname{Hom}_{A^\star X} (A_\star Y, A_{\star-k} X).\]
We will construct a corresponding class $\upgamma_{h}\in A^k(Y)$. By definition such an operational class consists of the data of compatible homomorphisms
\begin{equation} \label{eqn: homomorphism c phi f} \upgamma_{h}(f) \colon A_m (X^\prime)\to A_{m -k}(X^\prime) \end{equation}
for every $f \colon X^\prime \to Y$ and $m \in \N$. Given such a map $f$, we consider the composite $X^\prime \xrightarrow{f} Y \xrightarrow{p} X$ fitting into the diagram
\[
\begin{tikzcd}
Y^\prime \ar[r] \ar[d] & Y \ar[d,"p"] \\
X^\prime \ar[r] \ar[ru,"f" {xshift=2pt,yshift=-2pt}] & X
\end{tikzcd}
\]
with $Y^\prime \colonequals X^\prime \times_X Y$. The graph over $X$ of the morphism $f$ is a closed embedding $\Gamma_f \colon X^\prime \to Y^\prime$ because $Y$ is separated over $X$ \cite[Proposition~11.2.6]{Vakil}. Pushforward along $\Gamma_f$ induces a sequence
\begin{equation} \label{eqn: Kronecker proof first sequence} A_m X^\prime \xrightarrow{\Gamma_{f\star}} A_m Y^\prime \xrightarrow{\cong} (A_\star X^\prime \otimes_{A^\star X} A_\star Y)_m \end{equation}
where the isomorphism is given by \Cref{thm: relative Kunneth}. Applying $h$ gives:
\begin{equation} \label{eqn: Kronecker proof second sequence} (A_\star X^\prime \otimes_{A^\star X} A_\star Y)_m \xrightarrow{\operatorname{Id} \otimes h} (A_\star X^\prime \otimes_{A^\star X} A_\star X)_{m-k} \xrightarrow{\cong} A_{m-k} X^\prime \end{equation}
where the isomorphism follows from the fact that $X$ is smooth. Composing \eqref{eqn: Kronecker proof first sequence} and \eqref{eqn: Kronecker proof second sequence}, we obtain the desired homomorphism \eqref{eqn: homomorphism c phi f}. We now show that this provides a two-sided inverse to \eqref{eqn: Kronecker duality map}. 

\begin{lemma} \label{lem: Kronecker proof first inverse} The composite
\begin{equation} A^k Y \to \Hom_{A^\star X}(A_\star Y, A_{\star-k} X) \to A^k Y \end{equation}
is the identity.
\end{lemma}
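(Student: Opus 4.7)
The plan is to verify that, for every $\upgamma \in A^k Y$, the operational class $\upgamma_{h_\upgamma}$ acts on Chow homology identically to $\upgamma$. Since an operational class is determined by its cap product action, it will suffice to show $\upgamma_{h_\upgamma}(f)(\alpha) = f^\star\upgamma \cap \alpha$ for every morphism $f\colon X^\prime \to Y$ and every $\alpha \in A_m X^\prime$.

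Let $\kappa\colon A_\star Y^\prime \xrightarrow{\cong} A_\star X^\prime \otimes_{A^\star X} A_\star Y$ denote the relative K\"unneth isomorphism of \Cref{thm: relative Kunneth}, and let $\mu\colon A_\star X^\prime \otimes_{A^\star X} A_\star X \xrightarrow{\cong} A_\star X^\prime$ be the isomorphism used in constructing $\upgamma_h$, available because $X$ is smooth. Unwinding the construction and substituting $h_\upgamma = p_\star \circ (\upgamma \cap -)$ gives
\[ \upgamma_{h_\upgamma}(f)(\alpha) = \mu \circ (\operatorname{Id} \otimes p_\star) \circ (\operatorname{Id} \otimes (\upgamma \cap -)) \circ \kappa(\Gamma_{f*}\alpha). \]
I then apply \Cref{lem: Kunneth commuting square} to the rightmost two factors, rewriting this as $\mu \circ (\operatorname{Id} \otimes p_\star) \circ \kappa(f^{\prime\star}\upgamma \cap \Gamma_{f*}\alpha)$. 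Assuming momentarily the naturality identity $p^\prime_\star = \mu \circ (\operatorname{Id} \otimes p_\star) \circ \kappa$, this collapses to $p^\prime_\star(f^{\prime\star}\upgamma \cap \Gamma_{f*}\alpha)$. The projection formula for the closed embedding $\Gamma_f\colon X^\prime \hookrightarrow Y^\prime$, combined with $\Gamma_f^\star f^{\prime\star} = f^\star$ and $p^\prime \circ \Gamma_f = \operatorname{Id}_{X^\prime}$, then finishes the calculation: $p^\prime_\star \Gamma_{f*}(f^\star\upgamma \cap \alpha) = f^\star\upgamma \cap \alpha$, as required.

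The main obstacle is the naturality identity $p^\prime_\star = \mu \circ (\operatorname{Id} \otimes p_\star) \circ \kappa$, which does not appear explicitly earlier in the paper and expresses compatibility of the relative K\"unneth isomorphism with pushforward by $p$. I would verify it on the generators $[Y^\prime(\upsigma)|_V]$ supplied by \Cref{prop: presentation ordinary homology relative}. By \Cref{lem: Kunneth on effective classes}, $\kappa[Y^\prime(\upsigma)|_V] = [V] \otimes [Y(\upsigma)]$; since $\Sigma$ is complete, $p_\star[Y(\upsigma)] = [X]$ when $\upsigma$ is top-dimensional and is zero otherwise, because only maximal cones produce fibrewise strata $Y(\upsigma)$ finite (in fact isomorphic) over $X$. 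A parallel computation of $p^\prime_\star[Y^\prime(\upsigma)|_V]$ matches these values under $\mu$, confirming the identity.
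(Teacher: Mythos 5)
Your proof is correct and follows essentially the same route as the paper's. You use the same three ingredients — \Cref{lem: Kunneth commuting square} to move $\upgamma\cap-$ past the K\"unneth isomorphism, the compatibility $p'_\star = \mu\circ(\operatorname{Id}\otimes p_\star)\circ\kappa$ checked on the generators $[Y'(\upsigma)|_V]$ using completeness of $\Sigma$, and the projection formula along $\Gamma_f$ with $p'\circ\Gamma_f = \operatorname{Id}$ — and these are exactly the steps in the paper's argument, merely carried out in a slightly different order.
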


\begin{proof} Given $\upgamma \in A^k Y$ and $f \colon X^\prime \to Y$ we must show that the homomorphism
\[ f^\star \upgamma \cap - \colon A_m X^\prime \to A_{m-k} X^\prime \]
coincides with the homomorphism $A_m X^\prime \to A_{m-k} X^\prime$ constructed by combining \eqref{eqn: Kronecker proof first sequence} and \eqref{eqn: Kronecker proof second sequence}, taking $h(V) = p_\star( \upgamma \cap V)$. We first describe $f^\star \upgamma \cap -$. Let $Y^\prime \colonequals X^\prime \times_X Y$ as above, and consider the commuting diagram:
\[
\begin{tikzcd}
& X^\prime \ar[d,"\Gamma_f" hook] \ar[rd,"f"] \ar[ld,"\operatorname{Id}" swap] & \\
X^\prime & Y^\prime \ar[l,"p^\prime"] \ar[r,"f^\prime" swap] & Y.
\end{tikzcd}
\]
Given $V \in A_\star X^\prime$ we then have:
\begin{align} \label{eqn: Kronecker proof first composite identity} \nonumber f^\star \upgamma \cap V & = \operatorname{Id}_\star ( f^\star \upgamma \cap V) \\
\nonumber & = p^\prime_\star \Gamma_{f \star} \left( \Gamma_f^\star f^{\prime \star} \upgamma \cap V \right) \\
& = p^\prime_\star (f^{\prime \star} \upgamma \cap \Gamma_{f \star} V).
\end{align}

We now describe the homomorphism $A_m X^\prime \to A_{m-k} X^\prime$ constructed by combining \eqref{eqn: Kronecker proof first sequence} and \eqref{eqn: Kronecker proof second sequence}. This homomorphism appears as the top row of the following diagram
\[
\begin{tikzcd}
A_m X^\prime \ar[r,"\Gamma_{f \star}"] & A_m Y^\prime \ar[r,"\cong"] \ar[d,"V \mapsto f^{\prime \star} \upgamma \cap V"] & (A_\star X^\prime \otimes_{A^\star X} A_\star Y)_m \ar[rr,"V \otimes W \mapsto V \otimes p_\star (\upgamma \cap W)\ \ \ "] & & (A_\star X^\prime \otimes_{A^\star X} A_\star X)_{m-k} \ar[r,"\cong"] & A_{m-k} X^\prime \\
& A_{m-k} Y^\prime \ar[r,"\cong"] & (A_\star X^\prime \otimes_{A^\star X} A_\star Y)_{m-k} \ar[rr,"V \otimes W \mapsto V \otimes p_\star W"] &  &(A_\star X^\prime \otimes_{A^\star X} A_\star X)_{m-k} \ar[u,"="] &
\end{tikzcd}
\]
which commutes by \Cref{lem: Kunneth commuting square}. We refactor the top row via the bottom row, and observe that the composite $A_{m-k}Y^\prime \to A_{m-k} X^\prime$ is precisely $p^\prime_\star$ as can be checked on the generators $[Y^\prime(\upsigma)]$. Comparing to \eqref{eqn: Kronecker proof first composite identity} we conclude that the two homomorphisms coincide, as required.
\end{proof}

\begin{lemma} \label{lem: Kronecker proof second inverse} The composite
\begin{equation} \label{eqn: Kronecker proof second inverse} \Hom_{A^\star X}(A_\star Y, A_{\star-k} X) \to A^k Y \to \Hom_{A^\star X}(A_\star Y, A_{\star-k} X) \end{equation}
is the identity.
\end{lemma}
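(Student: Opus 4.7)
The plan is to check directly that $h_{\upgamma_h}(V) = h(V)$ for every $V \in A_\star Y$; since both sides are $A^\star X$-linear in $V$ (by \Cref{lem: Kronecker duality is linear} together with the hypothesis on $h$), this suffices. I would begin by unpacking the construction of $\upgamma_h$ at the test morphism $f = \operatorname{Id}_Y$, for which $X^\prime = Y$, $Y^\prime = Y \times_X Y$, and the graph $\Gamma_f$ becomes the relative diagonal $\Delta \colon Y \to Y \times_X Y$.

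Under the K\"unneth isomorphism of \Cref{thm: relative Kunneth}, write $\Delta_\star V = \sum_i V_i \otimes W_i$ in $A_\star Y \otimes_{A^\star X} A_\star Y$. The construction of $\upgamma_h(\operatorname{Id}_Y)(V)$ then successively applies $\operatorname{Id} \otimes h$ and the canonical isomorphism $A_\star Y \otimes_{A^\star X} A_\star X \cong A_\star Y$ (which uses smoothness of $X$), producing $\sum_i p^\star \PD_X^{-1}(h(W_i)) \cap V_i$, where $\PD_X$ denotes Poincar\'e duality on $X$. Applying $p_\star$ and invoking the projection formula then yields
\[ h_{\upgamma_h}(V) = \sum_i h(W_i) \cdot p_\star V_i \in A_\star X, \]
where $\cdot$ is the product on $A_\star X$ induced by $\PD_X$.

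Using $A^\star X$-linearity of $h$, this rewrites as $h\bigl(\sum_i p^\star \PD_X^{-1}(p_\star V_i) \cap W_i\bigr)$. I would conclude by recognising the argument of $h$ as the image of $\Delta_\star V$ under $p_{2\star} \colon A_\star(Y \times_X Y) \to A_\star Y$, where $p_2$ is the second projection. Since $p_2 \circ \Delta = \operatorname{Id}_Y$, this image equals $V$, giving $h_{\upgamma_h}(V) = h(V)$ as required.

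The main obstacle is this last identification: one must establish that under the K\"unneth isomorphism, the second projection satisfies $p_{2\star}(V \otimes W) = p^\star \PD_X^{-1}(p_\star V) \cap W$. This is the relative analogue of the symmetry enjoyed by the absolute K\"unneth formula, and I expect to verify it by a direct computation on the generators $[V] \otimes [Y(\uptau)]$ of \Cref{lem: Kunneth on effective classes}: both sides reduce to $d \cdot [Y(\uptau)|_{p(V)}]$ when $V \to p(V)$ is generically finite of degree $d$, and vanish otherwise. Extending by $A^\star X$-linearity then gives the formula on the full tensor product.
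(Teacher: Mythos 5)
Your argument is correct and follows essentially the same route as the paper: unpack $\upgamma_h$ at the test morphism $\operatorname{Id}_Y$, pass to $Y^\prime = Y\times_X Y$, decompose $\Delta_\star V$ via the relative K\"unneth isomorphism, reduce the key step to a check on generators, and conclude via $p_{2\star}\Delta_\star = \operatorname{Id}$. The only cosmetic difference is that you isolate an $h$-independent formula $p_{2\star}(V\otimes W) = p^\star\PD_X^{-1}(p_\star V)\cap W$ and check it on the additive generators $[V]\otimes[Y(\uptau)]$, whereas the paper verifies directly that the square $p_\star\circ(\operatorname{Id}\otimes h) = h\circ p_{2\star}$ commutes on the $A^\star X$-module generators $[Y^\prime(\upsigma_1\times\upsigma_2)]$; your formula implies the paper's square using commutativity of the ring $A_\star X$ and $A^\star X$-linearity of $h$, so the two verifications are interchangeable.
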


\begin{proof}
Given a homomorphism $h$ and the associated cohomology class $\upgamma_h \in A^k Y$, we must show that for $V \in A_m Y$ we have
\begin{equation} \label{eqn: Kronecker proof phi same as capping with c phi} h(v) = p_\star (\upgamma_h \cap V)  \end{equation}
in $A_{m-k} X$. Consider the fibre product
\[
\begin{tikzcd}
Y^\prime \ar[r,"p_1"] \ar[d,"p_2" swap] & Y \ar[d,"p"] \\	
Y \ar[r,"p"] & X
\end{tikzcd}
\]
and note that $Y^\prime \to X$ is also a toric variety bundle, with the following bundle data:
\begin{enumerate}
	\item \textbf{Fibre fan.} $\Sigma \times \Sigma$.
	\item \textbf{Mixing collection.} $(L,L) \colon M \oplus M \to \Pic X$.
\end{enumerate}
There is a diagonal morphism $\Delta \colon Y \to Y^\prime$. By the construction of $\upgamma _h$ the map $V \mapsto p_\star (\upgamma_h \cap V)$ is given by the composite:
\begin{equation} \label{eqn: Kronecker proof diagonal composite} A_m Y \xrightarrow{\Delta_\star} A_m Y^\prime \cong (A_\star Y \otimes_{A^\star X} A_\star Y)_m \xrightarrow{\operatorname{Id} \otimes h} (A_\star Y \otimes_{A^\star X} A_\star X)_{m-k} \cong A_{m-k} Y \xrightarrow{p_\star} A_{m-k}X.\end{equation}
We claim that the following square commutes:
\begin{equation} \label{eqn: Kronecker proof commuting square for phi}
\begin{tikzcd}
A_m Y^\prime \ar[r,"\operatorname{Id} \otimes h"] \ar[d,"p_{2\star}"] & A_{m-k} Y \ar[d,"p_\star"] \\
A_m Y \ar[r,"h"] & A_{m-k} X.
\end{tikzcd}
\end{equation}
It suffices to check generators. Since $Y^\prime \to X$ is a toric variety bundle with fibre fan $\Sigma \times \Sigma$, and $X$ is smooth, \Cref{prop: presentation ordinary homology relative} shows that $A_\star Y^\prime$ is generated over $A^\star X$ by classes
\[ [Y^\prime(\upsigma_1 \times \upsigma_2)] \]
for $\upsigma_1,\upsigma_2 \in \Sigma$. On the other hand since $X$ is smooth we can write
\[ h(V) = c(V) \cap [X] \]
for a unique map $c \colon A_\star Y \to A^\star X$. We then have:
\[ p_\star ( \operatorname{Id} \otimes h) [Y^\prime(\upsigma_1 \times \upsigma_2)] = p_\star \big( p^\star c([Y(\upsigma_2)]) \cap [Y(\upsigma_1)] \big) = \begin{cases} c([Y(\upsigma_2)]) \cap [X] \qquad & \text{when $\upsigma_1 \in \Sigma_{(n)}$} \\ 0 \qquad & \text{otherwise.} \end{cases} \]
On the other hand, we have
\[ p_{2\star} [Y^\prime(\upsigma_1 \times \upsigma_2)] = \begin{cases} [Y(\upsigma_2)] \qquad & \text{when $\upsigma_1 \in \Sigma_{(n)}$} \\ 0 \qquad & \text{otherwise} \end{cases} \]
and when $\upsigma_1 \in \Sigma_{(n)}$ we have $h([Y(\upsigma_2)]) = c([Y(\upsigma_2)]) \cap [X]$. We conclude that \eqref{eqn: Kronecker proof commuting square for phi} commutes. We use this to refactor \eqref{eqn: Kronecker proof diagonal composite}. Since $p_{2\star} \Delta_\star = \operatorname{Id}$ we precisely obtain \eqref{eqn: Kronecker proof phi same as capping with c phi} as required.\end{proof}

\begin{proof}[Proof of \Cref{thm: relative Kronecker}] Combine Lemmas~\ref{lem: Kronecker duality is linear}, \ref{lem: Kronecker proof first inverse}, and \ref{lem: Kronecker proof second inverse} above.
\end{proof}

\begin{proof}[Proof of Theorem~\ref{thm: Minkowski weights}]
This follows immediately by combining relative Kronecker duality (\Cref{thm: relative Kronecker}) and the presentation of $A_\star Y$ as an $A^\star X$-module (\Cref{prop: presentation ordinary homology relative}). The balancing condition \eqref{eqn: balancing} for Minkowski weights corresponds precisely to the relation \eqref{eqn: relation ordinary homology relative} in $A_\star Y$.
\end{proof}


\section{Product rule} \label{sec: product rule}

\noindent \textbf{Assumptions in this section:} $X$ smooth, $\Sigma$ complete.

\noindent Having described Minkowski weights as a module, we now describe their product structure in terms of a fan displacement rule, extending \cite[Theorem~4.2]{FultonSturmfels}.

Since the base $X$ is smooth, there are canonical Poincar\'e duality isomorphisms which endow $A_\star X$ with a ring structure.

\begin{customthm}{B} \label{thm: product rule}
Fix a generic vector $v \in N$. Given Minkowski weights $W_1,W_2 \in \operatorname{MW}^\star(\Sigma,L,X)$ we define the product $W_1 W_2 \in \operatorname{MW}^\star(\Sigma,L,X)$ as
\[ (W_1 W_2)(\uptau) \colonequals \sum_{(\upsigma_1,\upsigma_2)} c_{\upsigma_1 \upsigma_2} (W_1(\upsigma_1) \cdot W_2(\upsigma_2)) \]
for all $\uptau \in \Sigma$, where $c_{\upsigma_1 \upsigma_2} = [N : N_{\upsigma_1}+N_{\upsigma_2}]$. The sum is over ordered pairs of cones $\upsigma_1,\upsigma_2 \supseteq \uptau$ satisfying:
\begin{enumerate}
	\item $\upsigma_1 \cap (\upsigma_2+v) \neq \emptyset$;
	\item $\codim \upsigma_1 + \codim \upsigma_2 = \codim \uptau$.
\end{enumerate}
Then the product $W_1 W_2$ of Minkowski weights corresponds to the cup product under the isomorphism given in Theorem~\ref{thm: Minkowski weights}. In particular, the definition of the product does not depend on the choice of $v$.
\end{customthm}

We work towards the proof. In Sections~\ref{sec: subbundles} and \ref{sec: subbundle classes} we discuss toric variety subbundles and derive a formula for their Chow classes in terms of fan displacements. In \Cref{sec: diagonal subbundle} we apply this in the special case of diagonal subbundles, which we then use in \Cref{sec: product rule proof} to deduce Theorem~\ref{thm: product rule}. Along the way we give a precise definition of genericity for the vector $v \in N$. Finally in \Cref{sec: PD map} we use the diagonal subbundle formula to describe the Poincar\'e duality map.

\begin{example} \label{ex: MW product} We continue \Cref{ex: MW}. We calculated the Minkowski weights $W_1$ and $W_2$ Poincar\'e dual to the classes $D_1 = [Y(\uptau_1)]$ and $D_2=[Y(\uptau_2)]$. Using the natural isomorphism $A^\star X \cong A_\star X$ we rewrite these cohomologically as follows:
\[
\begin{tikzpicture}[baseline=(current  bounding  box.center)]
	\draw(0,0) [fill=black] circle[radius=2pt];
	\draw (0,0) node[below,blue]{$0$};
	\draw[->] (0,0) -- (2,0);
	\draw (1.9,0) node[right,blue]{$0$};
	\draw[->] (0,0) -- (1.4,1.4);
	\draw (1.55,1.55) node[blue]{$\mathbbm{1}$};
	\draw[->] (0,0) -- (0,2);
	\draw (0,2) node[above,blue]{$0$};
	\draw[->] (0,0) -- (-1.4,-1.4);
	\draw (-1.55,-1.55) node[blue]{$\mathbbm{1}$};
	\draw (1.5,0.5) node[blue]{$\upalpha_1\!-\!\upalpha_2$};
	\draw (0.5,1.3) node[blue]{$0$};
	\draw (-1.2,0.5) node[blue]{$0$};
	\draw (0.5,-1) node[blue]{$\upalpha_1\!-\!\upalpha_2$};
	\draw (0,-2.15) node{$W_1$};

	\draw(6,0) [fill=black] circle[radius=2pt];
	\draw (6,0) node[below,blue]{$0$};
	\draw[->] (6,0) -- (8,0);
	\draw (7.9,0) node[right,blue]{$\mathbbm{1}$};
	\draw[->] (6,0) -- (7.4,1.4);
	\draw (7.6,1.6) node[blue]{$-\mathbbm{1}$};
	\draw[->] (6,0) -- (6,2);
	\draw (6,2) node[above,blue]{$\mathbbm{1}$};
	\draw[->] (6,0) -- (4.6,-1.4);
	\draw (4.45,-1.55) node[blue]{$0$};
	\draw (7.3,0.5) node[blue]{$\upalpha_2$};
	\draw (6.5,1.3) node[blue]{$\upalpha_1$};
	\draw (4.8,0.5) node[blue]{$0$};
	\draw (6.5,-1) node[blue]{$0$};
	\draw (6,-2.15) node{$W_2$};
\end{tikzpicture}
\]
We now calculate the product $W_1 W_2$ using Theorem~\ref{thm: product rule}. The vector
\[ v = \begin{psmallmatrix} 2 \\ 1 \end{psmallmatrix} \in N \]
is generic, and overlaying $\Sigma$ with the displacement $\Sigma + v$ produces the following picture:
\begin{equation} \label{eqn: product MW example displaced fan overlay}
\begin{tikzpicture}[baseline=(current  bounding  box.center),scale=1.6]
	\draw(0,0) [fill=black] circle[radius=1pt];
	\draw[->] (0,0) -- (5,0);
	\draw[->] (0,0) -- (3.5,3.5);
	\draw[->] (0,0) -- (0,3.5);
	\draw[->] (0,0) -- (-0.9,-0.9);
	\draw(2,0.5) [fill=black] circle[radius=1pt];
	\draw[->] (2,0.5) -- (5,0.5);
	\draw[->] (2,0.5) -- (5,3.5);
	\draw[->] (2,0.5) -- (2,3.5);
	\draw[->] (2,0.5) -- (0.6,-0.9);
	\draw (4.95,0.5) node[right,blue]{$\upalpha_1\!-\!\upalpha_2$};
	\draw (4.4,1.75) node[blue]{$\upalpha_2(\upalpha_1-\upalpha_2)$};
	\draw (4.95,3.5) node[right,blue]{$\upalpha_2\!-\!\upalpha_1$};
	\draw (3,2.25) node[blue]{$\upalpha_1(\upalpha_1\!-\!\upalpha_2)$};
	\draw (2,1.25) node[blue]{$\upalpha_1\!-\!\upalpha_2$};
	\draw (3.45,3.5) node[right,blue]{$\upalpha_1$};
	\draw (1.925,2.1) node[blue]{$\mathbbm{1}$};
\end{tikzpicture}
\end{equation}
This is a polyhedral complex whose polyhedra correspond to ordered pairs of cones $\upsigma_1,\upsigma_2 \in \Sigma$ such that $\upsigma_1 \cap (\upsigma_2 + v) \neq \emptyset$. We have labelled each polyhedron with the term
\[ c_{\upsigma_1 \upsigma_2} (W_1(\upsigma_1) \cdot W_2(\upsigma_2)) \]
appearing in the statement of Theorem~\ref{thm: product rule}. In this example we always have $c_{\upsigma_1 \upsigma_2}=1$. Unlabelled polyhedra are those for which the associated term is zero, in this case because either $W_1(\upsigma_1)$ or $W_2(\upsigma_2)$ is zero.

To calculate the Minkowski weight $W_1 W_2$, it remains to sum over the pairs $(\upsigma_1,\upsigma_2)$ associated to each cone $\uptau$. To achieve this, we take the above picture and undo the displacement, passing to the limit $v \rightarrow 0$. This gives rise to the following picture:
\begin{equation} \label{eqn: example product of MW}
\begin{tikzpicture}[baseline=(current  bounding  box.center)]
	\draw(0,0) [fill=black] circle[radius=2pt];
	\draw (0,0) node[below,blue]{$\mathbbm{1}$};
	\draw[->] (0,0) -- (2,0);
	\draw (1.9,0) node[right,blue]{$\upalpha_1\!-\!\upalpha_2$};
	\draw[->] (0,0) -- (1.4,1.4);
	\draw (1.55,1.55) node[blue]{$\upalpha_1$};
	\draw[->] (0,0) -- (0,2);
	\draw (0,2) node[above,blue]{$0$};
	\draw[->] (0,0) -- (-1.4,-1.4);
	\draw (-1.55,-1.55) node[blue]{$0$};
	\draw (1.75,0.5) node[blue]{$\upalpha_2(\upalpha_1\!-\!\upalpha_2)$};
	\draw (0.5,1.3) node[blue]{$0$};
	\draw (-1.2,0.5) node[blue]{$0$};
	\draw (0.5,-1) node[blue]{$0$};
\end{tikzpicture}
\end{equation}
The classes in the limit are determined as follows. For each cone $\uptau \in \Sigma$ we sum together the classes in \eqref{eqn: product MW example displaced fan overlay} associated to polyhedra which limit to $\uptau$. This corresponds to the condition that $\upsigma_1,\upsigma_2 \supseteq \uptau$. However, we discard classes associated to polyhedra which drop dimension in the limit, for instance the line segment in \eqref{eqn: product MW example displaced fan overlay} with associated class $\upalpha_1-\upalpha_2$. This corresponds to the condition that $\codim \upsigma_1 + \codim \upsigma_2 = \codim \uptau$.

Using the relations \eqref{eqn: MW example SR relations} one can verify that the Minkowski weight \eqref{eqn: example product of MW} is indeed Poincar\'e dual to $D_1 D_2$.
\end{example}

\subsection{Toric variety subbundles} \label{sec: subbundles} Fix a fan $\Sigma^\prime$ with lattice $N^\prime$ and consider a saturated sublattice
\[ N \subseteq N^\prime.\]
This induces a subtorus $T \subseteq T^\prime$ and a fan $\Sigma = \Sigma^\prime \cap N$. The map of fans $(N, \Sigma) \to (N^\prime, \Sigma^\prime)$ induces a toric morphism
\begin{equation} \label{eqn: inclusion of toric subvariety} Z \to Z^\prime \end{equation}
which is the normalisation of the closure of the image of $T \hookrightarrow T^\prime \hookrightarrow Z^\prime$. We refer to this as a \textbf{toric subvariety}; note that toric subvarieties are distinct from toric strata.

\begin{remark} The toric morphism $Z \to Z^\prime$ may not be a closed embedding, because the closure of $T \hookrightarrow Z^\prime$ may not be normal. A simple example is given by the cuspidal cubic $y^3=x^2$ in the affine plane. Nevertheless, we will continue to refer to $Z \to Z^\prime$ as a toric subvariety, since we are exclusively interested in its Chow class. \end{remark}

Now fix bundle data $(\Sigma^\prime,L^\prime)$ with associated toric variety bundle $Y^\prime \to X$. Given a saturated sublattice $N \subseteq N^\prime$ we wish to produce a toric variety bundle $Y \to X$ and a morphism $Y \to Y^\prime$ which fibrewise restricts to \eqref{eqn: inclusion of toric subvariety}.

There is a complication. In the absolute setting the inclusion $T \hookrightarrow Z^\prime$ arises as the $T$-orbit of the identity element of $T^\prime \subseteq Z^\prime$, but in the bundle setting there is no preferred identity element of $P^\prime \subseteq Y^\prime$. To obtain a toric variety subbundle we require additional data.

\begin{definition} Given bundle data $(\Sigma^\prime,L^\prime)$, a saturated sublattice $N \subseteq N^\prime$ is \textbf{rectifiable} if the coarse mixing collection
\[ {[L^\prime]} \colon M^\prime \xrightarrow{L^\prime} \Pic X \to \operatorname{Pic} X \]
is trivial when restricted to $N^\perp \subseteq M^\prime$. This is equivalent to the existence of a (necessarily unique) factorisation $[L]$ of the coarse mixing collection through $M$:
\[ 
\begin{tikzcd} M^\prime \ar[r] \ar[rr,bend right, "{[L^\prime]}" below] & M \ar[r, dashed, "{[L]}"] & \operatorname{Pic} X.
\end{tikzcd}
\]
A \textbf{rectification} is a system of trivialisations
\[ L^\prime(m) \cong \OO_X \]
for $m \in N^\perp$ which are compatible with tensor products and the isomorphisms encoded by $L^\prime$. This is equivalent to a choice of factorisation $L$
\begin{equation} \label{eqn: subbundle factorisation of mixing collection}
\begin{tikzcd} M^\prime \ar[r] \ar[rr,bend right, "L^\prime" below] & M \ar[r,"L", dashed ] & \Pic X
\end{tikzcd}
\end{equation}
enhancing the coarse factorisation above. A \textbf{rectified} sublattice is a rectifiable sublattice equipped with a rectification.
\end{definition}

\begin{construction} \label{construction: toric subbundle} Fix bundle data $(\Sigma^\prime,L^\prime)$ and a rectified saturated sublattice $N \subseteq N^\prime$. We will produce a natural morphism of toric variety bundles
\[ \upiota \colon Y \to Y^\prime \]
which in each fibre restricts to the morphism of toric varieties $Z \to Z^\prime$. We refer to this as a \textbf{toric variety subbundle}.

The construction proceeds as follows. Consider the dense principal bundles $P \subseteq Y$ and $P^\prime \subseteq Y^\prime$ associated to the mixing collections. The rectification gives an isomorphism
\[ P^\prime \cong (P \times T^\prime)/T.\]
From this we obtain a $T$-equivariant embedding $P \hookrightarrow P^\prime$. There is also a $T$-equivariant morphism $Z \to Z^\prime$ corresponding to the fan map $\Sigma \to \Sigma^\prime$. Together these induce a morphism
\[ Y = [(P \times Z)/T] \to [(P^\prime \times Z^\prime)/T^\prime] = Y^\prime. \]
Restricting to an open set and trivialising the principal bundles, we immediately see that this restricts to $Z \to Z^\prime$ on each fibre. In particular the morphism is proper since locally on the target it is proper.

We present an alternative formulation. The factorisation $L$ of the mixing collection amounts to a lift of $X \to \Bcal T^\prime$ to $X \to \Bcal T$. This induces a lift of $Y^\prime \to [Z^\prime/T^\prime]$ to $Y^\prime \to [Z^\prime/T]$ via the following $2$-commutative diagram:
\[
\begin{tikzcd}
Y^\prime \ar[rd,dashed] \ar[rdd,bend right] \ar[rrd, bend left=25pt] & & \\
& {[Z^\prime/T]} \ar[r] \ar[d] \ar[rd,phantom,"\square"] & {[Z^\prime/T^\prime]} \ar[d] \\
& \Bcal T \ar[r] & \Bcal T^\prime.	
\end{tikzcd}
\]
We then obtain the morphism $\upiota \colon Y \to Y^\prime$ as the fibre product:
\[
\begin{gathered}[b]
\begin{tikzcd}
Y \ar[r,"\upiota"] \ar[d] \ar[rd,phantom,"\square"] & Y^\prime \ar[d] \\
{[Z/T]} \ar[r] & {[Z^\prime/T]}.
\end{tikzcd}\\[-\dp\strutbox]
\end{gathered}
\]
\end{construction}

\begin{remark} The image of $\upiota \colon Y \to Y^\prime$ depends on the choice of rectification. Given two rectifications, the corresponding subbundles differ by the action of an element of $T^\prime \acts Y^\prime$, and the set of all such subbundles forms a torsor for $T^\prime/T$. Note that the set of of all rectifications also forms a torsor for $T^\prime/T$.
\end{remark}

\begin{remark} \label{rmk: toric subbundle of affine bundle} If $\Sigma^\prime$ is the fan of $\Aaff^{\!k}$ then $Y^\prime \to X$ is a split vector bundle, and a toric variety subbundle $Y \to Y^\prime$ is produced by choosing fibrewise identity elements  for the line bundles associated to $N^\perp \subseteq M^\prime$. This is equivalent to choosing nonvanishing sections of these line bundles. These exist if and only if the line bundles are trivial, i.e. if and only if $N$ is rectifiable, and a valid choice corresponds to a rectification. \end{remark}

\subsection{Subbundle classes} \label{sec: subbundle classes} By \Cref{prop: presentation ordinary homology relative} we can write
\[ \upiota_\star [Y] = \sum_{\upsigma \in \Sigma^\prime} p^\star c_\upsigma \cap [Y^\prime(\upsigma)] \]
for some $c_\upsigma \in A^\star X$. Note that this expression is not unique. We now describe a fan displacement rule for producing a particular such expression, which has the added benefit that for all $\upsigma \in \Sigma^\prime$ we have:
\[ c_\upsigma \in A^0 X = \Z. \]

Given a vector $v \in N^\prime$ we define
\[ \Sigma^\prime(v) \colonequals \{ \upsigma \in \Sigma^\prime : \upsigma \cap (N+v) \text{ consists of precisely one point} \}.\]
A vector $v \in N^\prime$ is \textbf{generic} if and only if $\dim \upsigma = \operatorname{codim} N$ for every $\upsigma \in \Sigma^\prime(v)$. Such vectors exist for any saturated sublattice $N$, and their rational multiples form a dense open subset of $N \otimes \RR$.

\begin{theorem} \label{thm: subbundle class} Fix a generic vector $v \in N^\prime$. Then we have
\[ \upiota_\star[Y] = \sum_{\upsigma \in \Sigma^\prime(v)} c_\upsigma [Y^\prime(\upsigma)] \]
where $c_\upsigma \colonequals [N^\prime : N_\upsigma + N] \in \N$. In particular, the class does not depend on the choice of rectification.
\end{theorem}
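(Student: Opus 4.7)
The strategy is to bootstrap from the absolute Fulton--Sturmfels formula \cite{FultonSturmfels} using the relative K\"unneth theorem. Both sides are classes in $A_\star Y'$, which by \Cref{prop: presentation ordinary homology relative} is generated by the $[Y'(\sigma)]$ over $A^\star X$; the asserted identity sits inside this presentation, with the right hand side already expressed with constant (integer) coefficients.

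\textbf{Trivial bundle case.} First I would treat $Y' = X \times Z'$ with canonical mixing collection, so $Y = X \times Z$ and $\iota = \operatorname{id}_X \times \iota_Z$ for $\iota_Z \colon Z \to Z'$ the absolute toric subvariety morphism. The relative K\"unneth theorem (\Cref{thm: relative Kunneth}) identifies $A_\star(X \times Z') \cong A_\star X \otimes_\Z A_\star Z'$, and \Cref{lem: Kunneth on effective classes} sends $\iota_\star[Y]$ to $[X] \otimes (\iota_Z)_\star [Z]$. The absolute Fulton--Sturmfels formula expresses $(\iota_Z)_\star [Z] = \sum_{\sigma \in \Sigma'(v)} c_\sigma [Z'(\sigma)]$ in $A_\star Z'$, and tensoring with $[X]$ gives the result.

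\textbf{General case.} For arbitrary bundle data I would pull back along the principal $T'$-torsor $\pi \colon P' \to X$ associated to $L'$. Over $P'$ the mixing collection becomes canonically trivial, so $\pi^\star Y' \cong P' \times Z'$; the pullback rectification on $P'$ agrees with the canonical one up to a fibrewise $T'$-translation, and such translations preserve Chow classes since $T'$ is connected. The trivial bundle case applied to $\pi^\star Y' \to P'$ gives
\[ \iota'_\star [\pi^\star Y] = \sum_{\sigma \in \Sigma'(v)} c_\sigma [\pi^\star Y'(\sigma)], \]
and by flat base change this is $\pi^\star$ of the desired identity on $Y'$.

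\textbf{Main obstacle.} The final step is to descend from $P'$ back to $X$: the pullback $\pi^\star \colon A_\star Y' \to A_\star (\pi^\star Y')$ need not be injective since $\pi$ has non-complete fibres $T'$. One remedy is to complete $T'$ fibrewise to a smooth projective toric variety $\bar T'$ and form the associated $\bar T'$-bundle $\bar\pi \colon \bar P' \to X$; then $\bar\pi^\star$ is injective by relative K\"unneth (\Cref{thm: relative Kunneth}) for the compactified total space, and the extra terms introduced by the compactification can be matched using the presentation. An alternative is Noetherian induction on $\dim X$: trivialise $Y'$ over a dense open $U \subseteq X$, apply the trivial case on $Y'|_U$, and lift to $Y'$ using the localisation sequence along $D = X \setminus U$ together with \Cref{prop: presentation ordinary homology relative} (which is valid without smoothness of the base). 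The final assertion that the class is independent of rectification is then a corollary, since different rectifications produce $T'$-translates of $Y$ inside $Y'$.
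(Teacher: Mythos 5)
Your trivial bundle case is correct: the relative K\"unneth isomorphism (\Cref{thm: relative Kunneth}, \Cref{lem: Kunneth on effective classes}) together with the absolute Fulton--Sturmfels formula gives the identity on $X \times Z^\prime$. The difficulty is entirely in the descent to the general case, and both of your proposed remedies break at exactly the point where the rectification hypothesis is doing real work.

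For the compactification remedy, the problem is that pulling back the bundle $Y^\prime \to X$ along $\bar\pi \colon \bar P^\prime \to X$ does \emph{not} trivialise the mixing collection: for $m \neq 0$ the pullback $\bar\pi^\star L^\prime(m)$ is the associated line bundle on the $\bar T^\prime$-bundle $\bar P^\prime$, which is nontrivial. Only the dense torsor $P^\prime \subseteq \bar P^\prime$ carries the tautological trivialisation. So $\bar P^\prime \times_X Y^\prime$ is again a nontrivial toric variety bundle and the trivial-bundle case cannot be applied over $\bar P^\prime$; injectivity of $\bar\pi^\star$ buys nothing if there is no identity upstairs to descend. For the Noetherian induction, restricting to a trivialising open $U$ gives the identity in $A_\star(Y^\prime|_U)$, and the localisation sequence then shows only that $\upiota_\star[Y] - \sum_\upsigma c_\upsigma [Y^\prime(\upsigma)]$ lies in the image of $A_\star(Y^\prime|_D) \to A_\star Y^\prime$ for $D = X \setminus U$. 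Nothing forces this error class to vanish: it is not of the required subbundle-versus-displacement form over $D$, so the inductive hypothesis does not apply to it, and \Cref{prop: presentation ordinary homology relative} has relations, so the coefficients in the presentation are not uniquely determined by their restriction to $U$.

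The paper's proof works directly on $Y^\prime$ with no reduction to a trivialisation. For $\operatorname{codim} N = 1$ it computes the divisor of the rational function $t^m - 1$ with $m \in N^\perp$. Besides the expected terms along the fibrewise strata $Y^\prime(\uptau)$, there are a priori correction terms along the base divisors $p^{-1}\Supp D_i$ introduced by the rational sections $s_i$ used to trivialise the bundle. The entire content of rectification is that $t^m$, for $m \in N^\perp$, is an honest invertible function on $P^\prime/T$ rather than a rational section of a nontrivial line bundle, so $\ord_{D^\prime}(t^m - 1) = 0$ along every base component and these corrections vanish. The general codimension case then factors through $Y \to Y^+ \to Y^\prime$ for $N^+ = N \oplus \Z v$, applies the codimension one case, and pushes forward, with the multiplicities computed by a lattice index calculation. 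Your reduction strategy avoids exactly this divisor computation, and with it the step that shows why rectification is the right hypothesis; that is where the gap lies.
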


\begin{proof} We follow \cite[proof of Lemma~4.4]{FultonSturmfels}. The case $N = N^\prime$ is trivial and we exclude it. Since $v$ is generic, $v \not\in N$ and so $N + \Z v \cong N \oplus \Z v$. A generic vector remains generic under rescaling by elements of $\Q$ and translating by elements of $N$. Consequently we may assume that $N \oplus \Z v$ is a saturated sublattice of $N^\prime$.

We first establish the result in the case $\codim N=1$. Here we have $N^\prime = N \oplus \Z v$ and we can choose a vector $m \in M^\prime$ such that:
\[ \langle m, N \rangle =0, \qquad \langle m, v \rangle = -1.\]
As in \Cref{sec: subschemes and eigenfunctions} we trivialise the toric variety bundle over an open set as follows. Choose a basis $m_1,\ldots,m_n$ for $M^\prime$ and let $L_i \colonequals L^\prime(m_i)$. Choose nontrivial rational sections $s_i$ of $L_i$, let $D_i \colonequals \operatorname{div} s_i$, and define $U \colonequals X \setminus \cup_{i=1}^n \Supp D_i$. On this open set the sections $s_i$ are invertible and produce an isomorphism
\begin{equation} \label{eqn: isomorphism Y prime restricted to U} Y^\prime|_U \cong U \times Z^\prime \end{equation}
which produces an isomorphism between rational functions on $Y^\prime$ and rational functions on $X \times Z^\prime$. The element $m \in M^\prime$ gives a rational function $z^m$ on $Z^\prime$ which we pull back to produce a rational function $t^m$ on $Y^\prime$. Now consider the rational function
\[ f \colonequals t^m-1 \]
on $Y^\prime$. Restricting to the open set $P^\prime|_U \subseteq Y^\prime$, the divisor associated to this rational function is $P|_U$, which is the restriction of the subbundle $Y$. It remains to calculate the order of vanishing along the divisors in the complement of $P^\prime|_U \subseteq Y^\prime$, namely:
\begin{itemize}
	\item The fibrewise toric divisors $Y^\prime(\uptau)$ for $\uptau \in \Sigma^\prime_{(1)}$.
	\item The irreducible components of $(\cup_{i=1}^n p^{-1} \Supp D_i)$.
\end{itemize}
We begin with the fibrewise toric divisors. Since we have the isomorphism \eqref{eqn: isomorphism Y prime restricted to U}, the calculation is equivalent to the corresponding calculation on $Z^\prime$ which is carried out in \cite{FultonSturmfels}. We have:
\[ \operatorname{ord}_{Y^\prime(\uptau)} f = \begin{cases} -c_\uptau \qquad \text{if $\uptau \in \Sigma^\prime(v)$,} \\ 0 \qquad \ \ \, \text{\ \ otherwise}. \end{cases}\]
We now consider the irreducible components of $(\cup_{i=1}^n p^{-1} \Supp D_i)$. Here the proof diverges from the absolute case, and the assumption that $N \subseteq N^\prime$ is rectified plays a crucial role. The rectification produces a trivialisation
\[ P^\prime / T \cong X \times (T^\prime/T).\]
Since $N^\perp \subseteq M^\prime$ is the character lattice of $T^\prime/T$, the element $m \in N^\perp$ gives a homomorphism $m \colon T^\prime/T \to \Gm$. Then $t^m$ arises as the following composite:
\begin{equation} \label{eqn: tm as a composite} t^m \colon P^\prime \to P^\prime/T \xrightarrow{\cong} X \times (T^\prime/T) \to T^\prime/T \xrightarrow{m} \Gm. \end{equation}
Fix an irreducible component $D$ of $\cup_{i=1}^n \Supp D_i$ and let $D^\prime \colonequals p^{-1}D$ be the corresponding irreducible component of $(\cup_{i=1}^n p^{-1}\Supp D_i)$. The generic point of $D^\prime$ is contained in $P^\prime$ and we conclude from \eqref{eqn: tm as a composite} that $\ord_{D^\prime} t^m = 0$.

In particular $t^m$ gives a well-defined rational function on $D^\prime$. This function is nonconstant: we have $\langle m ,v \rangle \neq 0$, which implies that $m \colon T^\prime/T \to \Gm$ is nonconstant, which implies that $t^m$ is nonconstant in every fibre of $p$, and $D^\prime$ is a nonempty union of such fibres. In particular we see that $t^m \not\equiv 1$ along $D^\prime$ and therefore:
\[ \operatorname{ord}_{D^\prime} f = \operatorname{min}( \operatorname{ord}_{D^\prime} t^m, \ord_{D^\prime} 1) = \operatorname{min} (0,0) = 0. \]
There is therefore no contribution from the irreducible components of $(\cup_{i=1}^n p^{-1} \Supp D_i)$: the putative correction terms vanish. This completes the proof when $\codim N = 1$. 

For the general case, fix a rectified sublattice $N \subseteq N^\prime$ of arbitrary codimension and a generic vector $v \in N^\prime$. Consider the saturated sublattice:
\[ N^+ \colonequals N \oplus \Z v. \]
There are inclusions $N \subseteq N^+ \subseteq N^\prime$. The rectification of $N \subseteq N^\prime$ induces rectifications of $N^+ \subseteq N^\prime$ and $N \subseteq N^+$ and we obtain nested toric variety subbundles:
\[ Y \xrightarrow{\upiota} Y^+ \xrightarrow{\upkappa} Y^\prime.\]
We let $\Sigma^+ = \Sigma^\prime \cap N^+$ denote the fibre fan for $Y^+$. The vector $v \in N^+$ is generic with respect to the sublattice $N \subseteq N^+$ and since this sublattice has codimension one we can apply the case already established to obtain the following relation in $A_\star Y^+$:
\begin{equation} \label{eqn: subbundle formula first pushforward} \upiota_\star [Y] = \sum_{\upsigma^+ \in \Sigma^+(v)} c_{\upsigma^+} [Y^+(\upsigma^{+})] \end{equation}
We wish to pushforward the right-hand side along $\upkappa \colon Y^+ \to Y^\prime$. By definition each $\upsigma^{+} \in \Sigma^{+}$ can be written as $\upsigma^\prime \cap N^+_{\R}$ for some $\upsigma^\prime \in \Sigma^\prime$. Choosing the minimal such $\upsigma^\prime$ we have:
\begin{equation} \label{eqn: subbundle formula cone as intersection} \upsigma^{+} = \overline{(\upsigma^\prime)^\circ \cap N^{+}_{\R}} \end{equation}
which induces an identity of sublattices
\begin{equation} \label{eqn: subbundle formula lattice intersection} N_{\upsigma^{+}} = N_{\upsigma^\prime} \cap N^{+} \subseteq N^\prime. \end{equation}
The morphism $Y^{+} \to Y^\prime$ restricts to a morphism
\[ Y^{+}(\upsigma^{+}) \to Y^\prime(\upsigma^\prime) \]
which restricts further to a morphism of principal bundles
\begin{equation} \label{eqn: map of principal bundles} P^{+}(\upsigma^{+}) \to P^\prime(\upsigma^\prime). \end{equation}
We now describe this morphism. From \eqref{eqn: subbundle formula lattice intersection} we obtain the following diagram with exact rows:
\begin{equation} \label{eqn: subbundle formula map of exact sequences}
\begin{tikzcd}
0 \ar[r] & N_{\upsigma^{+}} \ar[r] \ar[d,hook] & N^{+} \ar[r] \ar[d,hook] & N^{+}(\upsigma^{+}) \ar[r] \ar[d] & 0 \\	
0 \ar[r] & N_{\upsigma^\prime} \ar[r] & N^\prime \ar[r] & N^\prime(\upsigma^\prime) \ar[r] & 0.
\end{tikzcd}
\end{equation}
Focusing on the final column and passing to the associated algebraic tori, we have identities of principal bundles:
\[ P^{+}(\upsigma^{+}) = P^{+} / T_{\upsigma^{+}}, \qquad P^\prime(\upsigma^\prime) = P^\prime/T_{\upsigma^\prime}. \]
By the subbundle construction (\Cref{construction: toric subbundle}) there is a natural map $P^{+} \to P^\prime$ from which we obtain the composition
\[ P^{+}/T_{\upsigma^{+}} \to P^\prime/T_{\upsigma^{+}} \to P^\prime/T_{\upsigma^\prime} \]
where the final map is induced by the group homomorphism $T_{\upsigma^{+}} \to T_{\upsigma^\prime}$ arising from the first column of \eqref{eqn: subbundle formula map of exact sequences}. This gives precisely the morphism of principal bundles \eqref{eqn: map of principal bundles}.

We now show that this is a finite morphism. For this, we pass to a local trivialisation and study the homomorphism of tori $T^{+}(\upsigma^{+}) \to T^\prime(\upsigma^\prime)$ with underlying lattice homomorphism:
\begin{equation} \label{eqn: subbundle formula lattice map} N^{+}(\upsigma^{+}) \to N^\prime(\upsigma^\prime). \end{equation}
We will show that this is a finite-index inclusion. Here we use the genericity of $v$. We have
\[ N_{\upsigma^\prime}/N_{\upsigma^{+}} = N_{\upsigma^\prime}/(N_{\upsigma^\prime} \cap N^{+}) = (N_{\upsigma^\prime}+N^{+})/N^{+} \hookrightarrow N^\prime/N^{+}. \]
Applying the snake lemma to \eqref{eqn: subbundle formula map of exact sequences}, we see that \eqref{eqn: subbundle formula lattice map} has vanishing kernel, and cokernel isomorphic to the cokernel of the inclusion
\[ N_{\upsigma^\prime} + N^{+} \hookrightarrow N^\prime. \]
To show that this is finite, we use the fact that $\upsigma^{\prime} \in \Sigma^{\prime}(v)$, which ensures that
\[ N_{\upsigma^\prime} \cap N^{+} = 0 \]
and $\rk N_{\upsigma^{\prime}} = \rk N^\prime - \rk N^{+}$. We conclude that $\rk (N_{\upsigma^\prime} + N^{+}) = \rk N^\prime$ and so the index is finite as required. We conclude that the morphism $Y^{+}(\upsigma^{+}) \to Y^\prime(\upsigma^\prime)$ is finite of degree $[N^\prime : N_{\upsigma^\prime}+N^{+}]$. Pushing forward \eqref{eqn: subbundle formula first pushforward} along $Y^{+} \to Y^\prime$ we obtain the following identity in $A_\star Y^\prime$
\[ \upkappa_\star \upiota_\star [Y] = \sum_{\upsigma^\prime \in \Sigma^\prime(v)} \left( [N^\prime : N_{\upsigma^\prime}+N^{+}]\cdot [N^{+} : N_{\upsigma^{+}}+N] \right)[Y^\prime(\upsigma^\prime)] \]
where we have used the bijection between $\Sigma^{+}(v)$ and $\Sigma^\prime(v)$ given by \eqref{eqn: subbundle formula cone as intersection}. It remains to check the coefficients. We have
\begin{align*}
[N^{+} : N_{\upsigma^{+}}+N] & = [N^{+} : (N_{\upsigma^\prime} \cap N^{+}) +N] \\
& = [N^{+} : (N_{\upsigma^\prime}+N) \cap N^{+}] \\
& = [N_{\upsigma^\prime} + N + N^{+} : N_{\upsigma^\prime} + N] \\
& = [N_{\upsigma^\prime} + N^{+} : N_{\upsigma^\prime} + N]
\end{align*}
from which we conclude
\[ 
[N^\prime : N_{\upsigma^\prime}+N^{+}]\cdot [N^{+} : N_{\upsigma^{+}}+N] = [N^\prime : N_{\upsigma^\prime}+N^{+}]\cdot [N_{\upsigma^\prime} + N^{+} : N_{\upsigma^\prime} + N] = [N^\prime : N + N_{\upsigma^\prime} ]
\]
as required.
\end{proof}

\subsection{Diagonal classes} \label{sec: diagonal subbundle} We now apply \Cref{thm: subbundle class} to diagonal subbundles. Fix bundle data $(\Sigma,L)$ with associated toric variety bundle $p \colon Y \to X$ and consider the fibre product
\[ Y^\prime \colonequals Y \times_X Y.\] 
The projection $q \colon Y^\prime \to X$ is a toric variety bundle with fibre fan $\Sigma^\prime = \Sigma \times \Sigma$ in the lattice $N^\prime = N \times N$ and mixing collection given by:
\begin{equation} \label{eqn: mixing collection for square} L^\prime \colon M \times M \xrightarrow{+} M \xrightarrow{L} \Pic X.\end{equation}
We consider the diagonal sublattice $N \subseteq N^\prime$. We have
\[ N^\perp = \{ (m,-m) \colon m \in M \} \subseteq M \times M = M^\prime \]
and for $(m,-m) \in N^\perp$ we have
\[ L^\prime(m,-m) = L(m) \otimes L(-m) \cong L(0) \cong \OO_X \]
where the isomorphisms are encoded in the mixing collection. Therefore $N \subseteq N^\prime$ is rectifiable and comes with a canonical rectification. The associated toric variety subbundle (\Cref{construction: toric subbundle}) is the diagonal embedding $\Delta \colon Y \hookrightarrow Y^\prime$ appearing in the following diagram:
\begin{equation} \label{eqn: Minkowski ring structure Y prime diagram}
\begin{tikzcd}
Y \ar[rd,"\Delta"] \ar[rrd,"\operatorname{Id}", bend left] \ar[ddr,"\operatorname{Id}" swap, bend right] & & \\ 
& Y^\prime \ar[r,"p_1"] \ar[d,"p_2" swap] \ar[rd,phantom,"\square"] & Y \ar[d,"p"] \\	
& Y \ar[r,"p"] & X.
\end{tikzcd}
\end{equation}

We say that $v \in N$ is \textbf{generic} if $(v,0) \in N^\prime=N \times N$ is generic in the sense of \Cref{sec: subbundle classes}.

\begin{proposition} \label{prop: diagonal subbundle formula} Fix $\uptau \in \Sigma$. Then for any generic vector $v \in N$ we have
\[ \Delta_\star [Y(\uptau)] = \sum_{(\upsigma_1,\upsigma_2)} c_{\upsigma_1 \upsigma_2} [Y^\prime(\upsigma_1 \times \upsigma_2)] \]
where $c_{\upsigma_1 \upsigma_2} = [N : N_{\upsigma_1} + N_{\upsigma_2}]$ and the sum is over ordered pairs of cones $\upsigma_1,\upsigma_2 \supseteq \uptau$ such that:
\begin{enumerate}
	\item $\upsigma_1 \cap (\upsigma_2+v) \neq \emptyset$;
	\item $\codim \upsigma_1 + \codim \upsigma_2 = \codim \uptau$.
\end{enumerate}
\end{proposition}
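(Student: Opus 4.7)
The plan is to factor the composition $Y(\uptau) \hookrightarrow Y \xrightarrow{\Delta} Y^\prime$ through the stratum $Y^\prime(\uptau \times \uptau) = Y(\uptau) \times_X Y(\uptau)$ and reduce to an application of \Cref{thm: subbundle class} on a smaller diagonal subbundle. Since $Y(\uptau) \to X$ is itself a toric variety bundle with fibre fan $\op{Star}(\uptau, \Sigma)$ in $N(\uptau)$ and mixing collection $L|_{M(\uptau)}$, the fibre product $Y^\prime(\uptau \times \uptau)$ is a toric variety bundle with fibre fan $\op{Star}(\uptau) \times \op{Star}(\uptau)$ in $N(\uptau) \times N(\uptau)$ and diagonal mixing collection. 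The composition above factors as
\[
Y(\uptau) \xrightarrow{\,\Delta_\uptau\,} Y^\prime(\uptau \times \uptau) \xrightarrow{\,j_\uptau\,} Y^\prime,
\]
where $\Delta_\uptau$ is the diagonal of $Y(\uptau)$ and $j_\uptau$ is the closed immersion of the stratum. Exactly as in \Cref{sec: diagonal subbundle} applied to $Y(\uptau)$ in place of $Y$, the embedding $\Delta_\uptau$ is the toric variety subbundle (\Cref{construction: toric subbundle}) associated to the diagonal sublattice $\Delta N(\uptau) \subseteq N(\uptau) \times N(\uptau)$, endowed with its canonical rectification.

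Next I would apply \Cref{thm: subbundle class} to $\Delta_\uptau$ using the displacement vector $(\bar v, 0) \in N(\uptau) \times N(\uptau)$, where $\bar v$ is the image of $v$ under $N \twoheadrightarrow N(\uptau)$. This yields
\[
(\Delta_\uptau)_\star [Y(\uptau)] \;=\; \sum c_{\bar\upsigma_1 \bar\upsigma_2}\, \bigl[ Y^\prime(\uptau \times \uptau)(\bar\upsigma_1 \times \bar\upsigma_2) \bigr]
\]
summed over ordered pairs $(\bar\upsigma_1, \bar\upsigma_2) \in \op{Star}(\uptau) \times \op{Star}(\uptau)$ with $\bar\upsigma_1 \cap (\bar\upsigma_2 + \bar v)$ a single point and $\dim(\bar\upsigma_1 \times \bar\upsigma_2) = \dim N(\uptau)$. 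Cones in $\op{Star}(\uptau)$ correspond bijectively to cones $\upsigma_i \in \Sigma$ containing $\uptau$, and under this bijection $Y^\prime(\uptau \times \uptau)(\bar\upsigma_1 \times \bar\upsigma_2) = Y^\prime(\upsigma_1 \times \upsigma_2)$. Since $\codim_{N(\uptau)} \bar\upsigma_i = \codim_N \upsigma_i$, the dimension condition translates to $\codim \upsigma_1 + \codim \upsigma_2 = \codim \uptau$, and nonemptiness of the downstairs intersection lifts to nonemptiness of $\upsigma_1 \cap (\upsigma_2 + v)$ upstairs, which is then a translate of $\uptau$ projecting to a single point in $N(\uptau)_\RR$. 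Pushing forward along the closed immersion $j_\uptau$ is tautological on these stratum classes.

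It remains to match the multiplicities. Using $N(\uptau)_{\bar\upsigma_i} = N_{\upsigma_i}/N_\uptau$ together with the isomorphism
\[
N(\uptau)^2 \big/ \bigl( N(\uptau)_{\bar\upsigma_1} \times N(\uptau)_{\bar\upsigma_2} + \Delta N(\uptau) \bigr) \xrightarrow{\;(\bar x, \bar y) \mapsto \bar x - \bar y\;} N(\uptau) \big/ \bigl( N(\uptau)_{\bar\upsigma_1} + N(\uptau)_{\bar\upsigma_2} \bigr),
\]
a direct calculation gives $c_{\bar\upsigma_1 \bar\upsigma_2} = [N : N_{\upsigma_1} + N_{\upsigma_2}] = c_{\upsigma_1 \upsigma_2}$, where at the last step we absorb $N_\uptau$ using $N_\uptau \subseteq N_{\upsigma_1} + N_{\upsigma_2}$.

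The main obstacle is verifying that $(\bar v, 0)$ is itself a generic displacement for $\Delta N(\uptau) \subseteq N(\uptau) \times N(\uptau)$ in $\op{Star}(\uptau) \times \op{Star}(\uptau)$ --- a property slightly stronger than the stated $v$-genericity for $\Delta N \subseteq N \times N$ in $\Sigma \times \Sigma$. Because \Cref{thm: subbundle class} guarantees that the resulting class does not depend on the choice of generic displacement, it is harmless to strengthen the hypothesis on $v$ so that the induced vector is generic simultaneously for all cones $\uptau \in \Sigma$; this amounts to intersecting finitely many open dense conditions. With this strengthened genericity in place, the preceding steps combine to prove the proposition.
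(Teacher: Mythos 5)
Your proposal correctly reconstructs the Fulton--Sturmfels argument that the paper's proof is deferring to: factor the diagonal through $Y^\prime(\uptau \times \uptau)$, apply \Cref{thm: subbundle class} to the diagonal sublattice $\Delta N(\uptau) \subseteq N(\uptau) \times N(\uptau)$ in the star fan, and transport the multiplicities back to $N$ via the third isomorphism theorem (using $N_\uptau \subseteq N_{\upsigma_1} + N_{\upsigma_2}$). The genericity point you flag is real and is likewise glossed over in the literature; the standard resolution is the one you give --- read ``generic'' as meaning $v$ lies in the dense locus where all the finitely many star-fan genericity conditions hold simultaneously, which is what the cited Fulton--Sturmfels argument implicitly requires.
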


\begin{proof} This follows from \Cref{thm: subbundle class}, identically to the proof of \cite[Theorem~4.2]{FultonSturmfels} (erroneously labelled as ``Proof of Theorem~3.2'').
\end{proof}

\begin{remark} In contrast to \Cref{thm: subbundle class}, the sum in \Cref{prop: diagonal subbundle formula} only requires that $\upsigma_1 \cap (\upsigma_2 + v)$ is nonempty, not that it consists of a single point. If $\uptau=0$ then the intersection being nonempty implies that it consists of a single point, but for general $\uptau$ this does not hold. The proof in the general case proceeds by first passing to the star fan of $\uptau$; in the star fan the intersection of $\upsigma_1/\uptau$ and $\upsigma_2/\uptau$ will consist of a single point, but the original intersection will typically consist of infinitely many points. This phenomena occurs already in the absolute setting \cite{FultonSturmfels}.
\end{remark}

\subsection{Product rule} \label{sec: product rule proof} Finally, we describe the product structure on Minkowski weights. We continue the notation of the previous section. Recall that since $X$ is smooth, its Chow homology carries a ring structure. By \cite[Corollary~17.4(b)]{FultonBig} this is given by
\[ V_1 \cdot V_2 = \Delta_X^! (V_1 \times V_2) \in A_\star X \]
where $V_1,V_2 \in A_\star X$, $V_1 \times V_2 \in A_\star(X \times X)$ is the external product class, and $\Delta_X \colon X \hookrightarrow X \times X$ is a regular embedding since $X$ is smooth.

\begin{definition} Apply \Cref{prop: presentation ordinary homology relative} to choose (non-unique) expressions for the diagonal classes
\begin{equation} \label{eqn: general form for diagonal class} \upiota_\star[Y(\uptau)] = \sum_{(\upsigma_1, \upsigma_2)} q^\star c_{\upsigma_1\upsigma_2}^\uptau \cap [Y^\prime(\upsigma_1 \times \upsigma_2)] \end{equation}
for each $\uptau \in \Sigma$, where $c_{\upsigma_1 \upsigma_2}^\uptau \in A^\star X$. The \textbf{product} of Minkowski weights $W_1,W_2 \in \operatorname{MW}^\star(\Sigma,L,X)$ is then given by
\[ (W_1 W_2)(\uptau) \colonequals \sum_{(\upsigma_1,\upsigma_2)} c_{\upsigma_1 \upsigma_2}^{\uptau} \cap  \left( W_1(\upsigma_1) \cdot W_2(\upsigma_2) \right) \]
for all $\uptau \in \Sigma$.
\end{definition}

\begin{proposition} \label{thm: product MW 1} The isomorphism $A^\star Y \cong \operatorname{MW}^\star(\Sigma,L,X)$ of Theorem~\ref{thm: Minkowski weights} respects the products. In particular, the product of Minkowski weights does not depend on the choices of expressions \eqref{eqn: general form for diagonal class}.
\end{proposition}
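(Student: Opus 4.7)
Given $\upgamma_1, \upgamma_2 \in A^\star Y$ with associated Minkowski weights $W_i(\upsigma) = p_\star(\upgamma_i \cap [Y(\upsigma)])$, the claim reduces to verifying
\[ p_\star\bigl((\upgamma_1 \cup \upgamma_2) \cap [Y(\uptau)]\bigr) = \sum_{(\upsigma_1,\upsigma_2)} c^\uptau_{\upsigma_1\upsigma_2} \cap \bigl(W_1(\upsigma_1) \cdot W_2(\upsigma_2)\bigr) \]
in $A_\star X$ for every $\uptau \in \Sigma$. The strategy is to push $(\upgamma_1 \cup \upgamma_2) \cap [Y(\uptau)]$ through the diagonal $\Delta \colon Y \hookrightarrow Y^\prime$ of diagram~\eqref{eqn: Minkowski ring structure Y prime diagram}, substitute the diagonal expansion~\eqref{eqn: general form for diagonal class}, and then recognize each resulting summand as $W_1(\upsigma_1) \cdot W_2(\upsigma_2)$ via a base change argument.

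For the first reduction, use $\upgamma_1 \cup \upgamma_2 = \Delta^\star(p_1^\star \upgamma_1 \cdot p_2^\star \upgamma_2)$ and the identity $p = q \circ \Delta$. The projection formula for $\Delta_\star$ then gives
\[ p_\star\bigl((\upgamma_1 \cup \upgamma_2) \cap [Y(\uptau)]\bigr) = q_\star\bigl((p_1^\star \upgamma_1 \cdot p_2^\star \upgamma_2) \cap \Delta_\star[Y(\uptau)]\bigr). \]
Substituting~\eqref{eqn: general form for diagonal class} and using the projection formula for $q$ to extract the Chow cohomology classes $c^\uptau_{\upsigma_1\upsigma_2}$ produces
\[ \sum_{(\upsigma_1,\upsigma_2)} c^\uptau_{\upsigma_1\upsigma_2} \cap q_\star\bigl((p_1^\star \upgamma_1 \cdot p_2^\star \upgamma_2) \cap [Y^\prime(\upsigma_1\times\upsigma_2)]\bigr). \]

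The main step is to identify each $q$-pushforward with $W_1(\upsigma_1) \cdot W_2(\upsigma_2)$. Since $Y^\prime(\upsigma_1\times\upsigma_2) = Y(\upsigma_1)\times_X Y(\upsigma_2)$ and the subbundles $Y(\upsigma_i) \to X$ are flat (being Zariski locally trivial), the fibre product has the expected dimension and
\[ \Delta_X^!\bigl([Y(\upsigma_1)] \times [Y(\upsigma_2)]\bigr) = [Y^\prime(\upsigma_1\times\upsigma_2)], \]
where $\Delta_X \colon X \hookrightarrow X\times X$ is the regular embedding given by the smoothness of $X$. Starting from $W_1(\upsigma_1) \cdot W_2(\upsigma_2) = \Delta_X^!(W_1(\upsigma_1) \times W_2(\upsigma_2))$ and rewriting the external product as
\[ W_1(\upsigma_1) \times W_2(\upsigma_2) = (p_{\upsigma_1}\times p_{\upsigma_2})_\star\bigl((\upgamma_1\times\upgamma_2)\cap[Y(\upsigma_1)\times Y(\upsigma_2)]\bigr), \]
the standard compatibilities of $\Delta_X^!$ with proper pushforward and cap product \cite[Chapter~6]{FultonBig} convert the right hand side into precisely $q_\star((p_1^\star\upgamma_1 \cdot p_2^\star\upgamma_2) \cap [Y^\prime(\upsigma_1\times\upsigma_2)])$.

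The main obstacle is the bookkeeping in this final identification, specifically in tracking the external product through refined Gysin base change, but all ingredients are standard once $X$ is smooth and $Y(\upsigma_i)\to X$ is flat. The independence of the product from the choices in~\eqref{eqn: general form for diagonal class} then follows as a corollary: the left hand side depends only on $\upgamma_1, \upgamma_2, \uptau$, so any valid choice of $c^\uptau_{\upsigma_1\upsigma_2}$ produces the same Minkowski weight product.
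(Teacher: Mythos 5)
Your proof is correct, and it takes a cleaner route than the paper's. Both arguments pivot on the diagonal expansion \eqref{eqn: general form for diagonal class}, the cartesian square $Y' = (Y\times Y)\times_{X\times X} X$, and the flatness identity $\Delta_X^!([Y(\upsigma_1)]\times[Y(\upsigma_2)]) = [Y'(\upsigma_1\times\upsigma_2)]$ — so the essential ingredients are shared. Where you diverge is the reduction step. The paper routes the product through $p_{2\star}\Delta_\star$, invokes \Cref{lem: Kunneth commuting square} twice to rewrite capped classes as K\"unneth tensors, and inserts a symmetry argument using the swap involution on $Y'$ to realign the two factors. You instead start from $\upgamma_1\cup\upgamma_2 = \Delta^\star(p_1^\star\upgamma_1\cdot p_2^\star\upgamma_2)$ and push directly with the projection formula for $\Delta_\star$ (using $q\circ\Delta = p$), reducing at once to the identity
\[
q_\star\bigl((p_1^\star\upgamma_1\cdot p_2^\star\upgamma_2)\cap[Y'(\upsigma_1\times\upsigma_2)]\bigr) = W_1(\upsigma_1)\cdot W_2(\upsigma_2),
\]
which you then verify by commuting $\Delta_X^!$ past proper pushforward and past the external bivariant class $\upgamma_1\times\upgamma_2$ (axiom (C2) in \cite[Chapter~17]{FultonBig}). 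This avoids the K\"unneth-commuting-square lemma and the swap-involution bookkeeping entirely; the trade-off is that you lean directly on the bivariant axioms rather than on the K\"unneth machinery the paper has already set up for relative Kronecker duality. Both proofs use smoothness of $X$ in the same essential place — to make $\Delta_X$ a regular embedding and hence $A_\star X$ a ring — and your final observation about independence of choices is the same as the paper's. Your sketch is somewhat compressed in its final identification step, but all the compatibilities you cite are standard and apply without obstruction here.
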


\begin{proof} For $\upgamma_1, \upgamma_2 \in A^\star Y$ we must prove that
\[ p_\star (\upgamma_1 \upgamma_2 \cap [Y(\uptau)]) = \sum_{(\upsigma_1,\upsigma_2)} c_{\upsigma_1 \upsigma_2}^\uptau \cap \Delta_X^! \big( p_\star (\upgamma_1 \cap [Y(\upsigma_1)]) \times p_\star (\upgamma_2 \cap [Y(\upsigma_2)]) \big) \]
for all $\uptau \in \Sigma$. Referring to \eqref{eqn: Minkowski ring structure Y prime diagram} we have
\begin{align*} \upgamma_1 \upgamma_2 \cap [Y(\uptau)] & = \upgamma_1 \cap \upgamma_2 \cap [Y(\uptau)] \\
& = \upgamma_1 \cap \upgamma_2 \cap 	p_{2\star} \Delta_\star [Y(\uptau)] \\
& = \upgamma_1 \cap p_{2\star} (p_2^\star \upgamma_2 \cap \Delta_\star[Y(\uptau)]) \\
& = \upgamma_1 \cap p_{2\star} \left( p_2^\star \upgamma_2 \cap \sum_{(\upsigma_1,\upsigma_2)} q^\star c_{\upsigma_1\upsigma_2}^\uptau \cap [Y^\prime(\upsigma_1 \times \upsigma_2)] \right) \\
& = \upgamma_1 \cap p_{2\star} \left( \sum_{(\upsigma_1,\upsigma_2)}  p_2^\star p^\star c_{\upsigma_1 \upsigma_2}^\uptau \cap p_2^\star \upgamma_2 \cap [Y^\prime(\upsigma_1 \times \upsigma_2)] \right) \\
& = \upgamma_1 \cap p_{2\star} \left( \sum_{(\upsigma_1,\upsigma_2)} p_2^\star p^\star c_{\upsigma_1 \upsigma_2}^\uptau \cap ( [Y(\upsigma_1)] \otimes (\upgamma_2 \cap [Y(\upsigma_2)]) ) \right)
\end{align*}
where the final equality holds by \Cref{lem: Kunneth commuting square}. The class inside the bracket is precisely
\[ p_2^\star \upgamma_2 \cap \Delta_\star [Y(\uptau)] = \Delta_\star \left( \Delta^\star p_2^\star \upgamma_2 \cap [Y(\uptau)] \right). \]
Since this is pushed forward along $\Delta$, it is invariant under the involution $Y^\prime \to Y^\prime$ which swaps the factors. We therefore obtain
\begin{align*}
\upgamma_1 \upgamma_2 \cap [Y(\uptau)] & = \upgamma_1 \cap p_{2\star} \left( \sum_{(\upsigma_1,\upsigma_2)} p_2^\star p^\star c_{\upsigma_1 \upsigma_2}^\uptau \cap ( (\upgamma_2 \cap [Y(\upsigma_2)]) \otimes [Y(\upsigma_1)] ) \right) \\
& = \sum_{(\upsigma_1,\upsigma_2)} \upgamma_1 \cap p^\star c_{\upsigma_1 \upsigma_2}^\uptau \cap p_{2\star} \big( (\upgamma_2 \cap [Y(\upsigma_2)]) \otimes [Y(\upsigma_1)] \big) \\
& = \sum_{(\upsigma_1,\upsigma_2)} p^\star c_{\upsigma_1 \upsigma_2}^\uptau \cap p_{2\star} \big( p_2^\star \upgamma_1 \cap \big( (\upgamma_2 \cap [Y(\upsigma_2)]) \otimes [Y(\upsigma_1)] \big) \big) \\
& = \sum_{(\upsigma_1,\upsigma_2)} p^\star c_{\upsigma_1 \upsigma_2}^\uptau \cap p_{2\star} \big( (\upgamma_2 \cap [Y(\upsigma_2)]) \otimes (\upgamma_1 \cap [Y(\upsigma_1)]) \big)
\end{align*}
where the final equality holds by \Cref{lem: Kunneth commuting square}. Applying $p_\star$ we obtain:
\begin{equation} \label{eqn: product structure proof 1} p_\star (\upgamma_1 \upgamma_2 \cap [Y(\uptau)]) = \sum_{(\upsigma_1, \upsigma_2)} c_{\upsigma_1 \upsigma_2}^{\uptau} \cap q_\star \big( (\upgamma_2 \cap [Y(\upsigma_2)]) \otimes (\upgamma_1 \cap [Y(\upsigma_1)]) \big).\end{equation}
Now consider the cartesian square:
\[
\begin{tikzcd}
Y^\prime \ar[r,hook] \ar[d,"q"] \ar[rd,phantom,"\square"] & Y \times Y \ar[d,"p \times p"] \\
X \ar[r,hook,"\Delta_X"] & X \times X.	
\end{tikzcd}
\]
Given effective classes $[V_1], [V_2] \in A_\star Y$ the K\"unneth isomorphism $A_\star Y \otimes_{A^\star X} A_\star Y \cong A_\star Y^\prime$ of \Cref{thm: relative Kunneth} identifies
\[ [V_1] \otimes [V_2] = [V_1 \times_X V_2] = \Delta_X^! ( [V_1] \times [V_2] ) \]
where the first equality follows from \Cref{lem: Kunneth on effective classes} and the second equality follows because $p \times p$ is flat. We obtain:
\begin{align*} q_\star \big( (\upgamma_2 \cap [Y(\upsigma_2)]) \otimes (c\upgamma_1 \cap [Y(\upsigma_1)]) \big) & = q_\star \Delta_X^! \big( (\upgamma_2 \cap [Y(\upsigma_2)]) \times (\upgamma_1 \cap [Y(\upsigma_1)]) \big) \\
& = \Delta_X^! \big( p_\star (\upgamma_2 \cap [Y(\upsigma_2)]) \times p_\star (\upgamma_1 \cap [Y(\upsigma_1)]) \big) \\
& = \Delta_X^! \big( p_\star (\upgamma_1 \cap [Y(\upsigma_1)]) \times p_\star (\upgamma_2 \cap [Y(\upsigma_2)]) \big).
\end{align*}
Plugging into \eqref{eqn: product structure proof 1} we obtain
\[ p_\star (\upgamma_1 \upgamma_2 \cap [Y(\uptau)]) = \sum_{(\upsigma_1, \upsigma_2)} c_{\upsigma_1 \upsigma_2}^{\uptau} \cap \Delta_X^! \big( p_\star (\upgamma_1 \cap [Y(\upsigma_1)]) \times p_\star (\upgamma_2 \cap [Y(\upsigma_2)]) \big) \]
as required.
\end{proof}

\begin{proof}[Proof of Theorem~\ref{thm: product rule}] Combine \Cref{prop: diagonal subbundle formula} and \Cref{thm: product MW 1}.
\end{proof}

\subsection{Poincar\'e duality map} \label{sec: PD map} Since $X$ is smooth, $Y$ is irreducible and admits a fundamental class. There is a Poincar\'e duality map:
\begin{align*}
	A^k Y & \to A_{\dim Y-k}Y \\
	\upgamma & \mapsto \upgamma \cap [Y].
\end{align*}
This is typically not an isomorphism unless $\Sigma$ is smooth. The domain and codomain are described in Theorems~\ref{thm: Minkowski weights} and \ref{prop: presentation ordinary homology relative} respectively. We express the Poincar\'e duality map in these terms.

\begin{theorem} \label{thm: PD map} Take $\upgamma \in A^k Y$ and let $W_\upgamma \in \on{MW}^k(\Sigma,L,X)$ be the associated Minkowski weight. Fix a generic vector $v \in N$. Then
\begin{equation} \label{eqn: PD map} \upgamma \cap [Y] = \sum_{(\upsigma_1,\upsigma_2)} c_{\upsigma_1 \upsigma_2} p^\star W_\upgamma(\upsigma_2) \cap [Y(\upsigma_1)] \end{equation}
where $c_{\upsigma_1 \upsigma_2} = [ N \colon N_{\upsigma_1} + N_{\upsigma_2}]$ and the sum is over ordered pairs of cones $\upsigma_1,\upsigma_2 \in \Sigma$ such that:
\begin{enumerate}
\item $\upsigma_1 \cap (\upsigma_2+v) \neq \emptyset$;
\item $\codim \upsigma_1 + \codim \upsigma_2 = \dim N$.
\end{enumerate}
\end{theorem}

\begin{proof} \Cref{sec: relative Kronecker} constructs the inverse to the relative Kronecker duality map. By definition, for a given Chow cohomology class this reconstructs the cap product operation from the associated Minkowski weight. We apply this construction in the case where $X^\prime = Y$ and $f = \on{Id}_Y$, so that $\Gamma_f = \Delta \colon Y \to Y^\prime = Y \times_X Y$. Then $\upgamma \cap -$ is given by the composite of \eqref{eqn: Kronecker proof first sequence} and \eqref{eqn: Kronecker proof second sequence}:
\[ A_m Y \xrightarrow{\Delta_\star} A_m Y^\prime \cong (A_\star Y \otimes_{A^\star X} A_\star Y)_m \xrightarrow{\on{Id} \otimes W_\upgamma} (A_\star Y \otimes_{A^\star X} A_\star X)_{m-k} \cong A_{m-k} Y.\]
Applying this to $[Y]$ and using \Cref{prop: diagonal subbundle formula} to describe $\Delta_\star[Y]$, we obtain the result.
\end{proof}

The expression \eqref{eqn: PD map} is not unique, due to the linear relations in $A_\star Y$. The choice of expression corresponds to the choice of generic vector $v \in N$.

Simplicial toric varieties furnish a large class of examples where $A^k Z$ and $A_{n-k} Z$ are abstractly isomorphic, but the Poincar\'e duality map is not an isomorphism.


\section{Piecewise polynomials}

\noindent \textbf{Assumptions in this section:} $X$ smooth, $\Sigma$ arbitrary (\Cref{sec: PP}) or $\Sigma$ complete (\Cref{sec: PP to MW}).

\subsection{Relative piecewise polynomials} \label{sec: PP} Consider the algebra $\operatorname{PP}^\star(\Sigma)$ of piecewise polynomials on $\Sigma$. By \cite[Theorem~1]{PaynePP} there is a canonical isomorphism of $\Z$-graded $A^\star_T$-algebras
\[ A^\star_T Z = \operatorname{PP}^\star(\Sigma) \]
where the $A^\star_T$-algebra structure on the right hand side is given by the inclusion of the subalgebra of global polynomials $\operatorname{Sym} M \hookrightarrow \operatorname{PP}^\star(\Sigma)$. We now extend the above isomorphism to the setting of toric variety bundles.

\begin{customthm}{C} \label{thm: piecewise polynomials} There is a canonical isomorphism of $\Z$-graded $A^\star X \otimes_\Z A^\star_T$-algebras:
\[ A^\star_T Y \cong A^\star X \otimes_\Z \operatorname{PP}^\star(\Sigma).\]	
\end{customthm}

\begin{remark} It follows that the equivariant Chow cohomology of a toric variety bundle depends only on the fibre fan, and not on the mixing collection. This is to be expected: the mixing collection only plays a role in the linear relations, which in $A^\star_T Y$ are simply used to eliminate the equivariant parameters, irrespective of the precise form of the relations. Note however that the kernel of the homomorphism $A^\star_T Y \to A^\star Y$ does depend on the mixing collection, see \Cref{sec: PP to MW}.
\end{remark}

\begin{remark} \label{rmk: PP assumptions sharp} We can remove the smoothness assumption on $X$ in Theorem~\ref{thm: piecewise polynomials}, at the cost of imposing that the fibre fan $\Sigma$ is projective. We first prove the result for smooth $\Sigma$, following the argument in \cite[Section~3]{BrionEquivariant} which makes use of the Białynicki--Birula decomposition: the key fact is that $Y(\upsigma) \hookrightarrow Y$ is a regular embedding for every maximal cone $\upsigma$, regardless of the singularities of $X$. The general case follows \cite[Section~2]{PaynePP}, making use of equivariant envelopes for toric varieties. We omit the details; the case stated above, with $X$ smooth and $\Sigma$ arbitrary, is most relevant for applications to enumerative geometry.
\end{remark}

\begin{lemma} \label{prop: quotient bundle by dense torus} There is a canonical isomorphism of stacks:
\[ [Y/T] = X \times [Z/T]. \]
\end{lemma}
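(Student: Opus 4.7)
The plan is to unpack the definition $Y = [(P \times Z)/T]$ from \Cref{def: toric variety bundle} and present $[Y/T]$ as an iterated quotient. Writing points of $Y$ as equivalence classes $[p,z]$ under the antidiagonal relation $(tp,z) \sim (p,tz)$, the fibrewise action of $s \in T$ on $Y$ is given by $s \cdot [p,z] = [p,sz]$, which is well-defined precisely because of this relation. We therefore have a canonical identification
\[ [Y/T] = [(P \times Z)/(T \times T)] \]
where the first copy of $T$ acts antidiagonally (as in the mixing construction) and the second copy acts on the $Z$-factor alone (via the fibrewise action).

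The combined $T \times T$-action reads $(t,s) \cdot (p,z) = (tp, st^{-1}z)$. The key observation is then that the group automorphism
\[ \upvarphi \colon T \times T \xrightarrow{\cong} T \times T, \qquad (t,s) \mapsto (t', s') \colonequals (t, st^{-1}) \]
intertwines this action with the product action $(t',s') \cdot (p,z) = (t'p, s'z)$, which separates as a $T$-action on $P$ alone and a $T$-action on $Z$ alone. Since $\upvarphi$ is a canonical isomorphism of groups, the induced isomorphism of stack quotients is also canonical.

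Finally, for a product action the quotient factors as
\[ [(P \times Z)/(T \times T)] = [P/T] \times [Z/T] = X \times [Z/T], \]
using that $[P/T] = X$ by construction of the principal bundle associated to the mixing collection. Stringing these identifications together yields the desired canonical isomorphism $[Y/T] \cong X \times [Z/T]$.

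There is no serious obstacle here: the content of the proof is the change of variables $(t,s) \mapsto (t, st^{-1})$ converting the antidiagonal action into a product action. The only thing to be careful about is bookkeeping the sign in the antidiagonal action and verifying that the fibrewise action descends as stated, both of which are direct consequences of the definitions.
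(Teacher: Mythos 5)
Your argument is correct and matches the paper's proof in essence: both identify $[Y/T]$ with the iterated quotient $[(P\times Z)/(T\times T)]$ and then observe that this quotient factors as $[P/T]\times[Z/T]=X\times[Z/T]$. You make explicit the bookkeeping — the automorphism $\upvarphi(t,s)=(t,st^{-1})$ of $T\times T$ converting the (antidiagonal, fibrewise) pair of actions into the product action on $P\times Z$ — which the paper compresses into the remark that the fibrewise action on $Y$ is induced by the action on either factor of $P\times Z$. This is a slightly more hands-on presentation of the same calculation, not a different route.
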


\begin{proof} By definition, $Y$ is the quotient:
\[ Y = [(P \times Z)/T] \]
via the antidiagonal action. The action $T \curvearrowright Y$ is induced by the action on either of the factors. It follows that
\[ [Y/T] = [(P \times Z)/(T \times T)] = [P/T] \times [Z/T] = X \times [Z/T] \]
as required.
\end{proof}

\begin{proposition} \label{prop: Kunneth universal toric bundle} There is a canonical isomorphism of $\Z$-graded $A^\star X \otimes_\Z A^\star_T$-algebras:
\[ A^\star (X \times [Z/T]) \cong A^\star X \otimes_\Z A^\star[Z/T].\]
\end{proposition}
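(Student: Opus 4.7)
Plan. Identify $X \times [Z/T]$ with the quotient stack $[(X \times Z)/T]$, in which $T$ acts trivially on $X$ and in the standard way on $Z$; this is \Cref{prop: quotient bundle by dense torus} applied to the trivial toric variety bundle $X \times Z \to X$. Under this identification $A^\star(X \times [Z/T]) = A^\star_T(X \times Z)$, and the natural external product
\[
\Phi \colon A^\star X \otimes_\Z A^\star_T Z \longrightarrow A^\star_T(X \times Z), \qquad c \otimes \phi \longmapsto \operatorname{pr}_X^\star c \cdot \operatorname{pr}_Z^\star \phi
\]
is manifestly a homomorphism of $\Z$-graded $A^\star X \otimes_\Z A^\star_T$-algebras. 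The task is to show $\Phi$ is an isomorphism.

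I would proceed via Edidin--Graham approximations. Fix a degree $k$ and choose a $T$-representation $V$ with free open $U \subseteq V$ of complement codimension $> k$, taking $V$ as a direct sum of characters generating $M$ so that $B \colonequals U/T$ is a smooth open in a product of projective spaces. Then $W \colonequals (Z \times U)/T$ is a toric variety bundle over $B$, and in degrees $\leqslant k$ one has $A^\star_T(X \times Z) = A^\star(X \times W)$ and $A^\star_T Z = A^\star W$. The proposition then reduces to the absolute Künneth
\[
A^\star(X \times W) \cong A^\star X \otimes_\Z A^\star W.
\]
For this I would apply relative Kronecker duality (\Cref{thm: relative Kronecker}) to the toric variety bundle $X \times W \to X \times B$ to rewrite the left-hand side as $\Hom_{A^\star(X \times B)}(A_\star(X \times W), A_\star(X \times B))$, then relative Künneth (\Cref{thm: relative Kunneth}) to reduce this to $\Hom_{A^\star B}(A_\star W, A_\star(X \times B))$. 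Using classical Künneth $A_\star(X \times B) = A_\star X \otimes_\Z A_\star B$ (valid since $B$ has a cellular decomposition) and Poincar\'e duality for $B$, one can commute $A^\star X \otimes_\Z -$ past $\Hom$ to obtain $A^\star X \otimes_\Z \Hom_{A^\star B}(A_\star W, A_\star B) = A^\star X \otimes_\Z A^\star W$ via a second application of relative Kronecker duality. Passing to the inverse limit over approximations then gives $\Phi$.

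The main obstacle is the $\Hom$--tensor commutation in the final step, since $A^\star X$ is not in general flat over $\Z$. The resolution rests on the finite presentation of $A_\star W$ over $A^\star B$ supplied by \Cref{prop: presentation ordinary homology relative}, together with the Gorenstein (indeed Poincar\'e duality) structure of $A^\star B$, which together ensure that $A \otimes_\Z \Hom_{A^\star B}(A_\star W, A_\star B) \to \Hom_{A^\star B}(A_\star W, A \otimes_\Z A_\star B)$ is an isomorphism for any $\Z$-module $A$. One must also verify that the composite chain of isomorphisms agrees with $\Phi$ and is compatible with the inverse limit; both checks are direct but require unwinding the constructions carefully.
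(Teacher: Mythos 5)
Your overall strategy mirrors the paper's: write the left-hand side as a $\Hom$ via relative Kronecker duality, simplify the homology via relative K\"unneth, and use Poincar\'e duality for the smooth base to pull $A^\star X$ out of the $\Hom$. The one structural difference is that you pass to Edidin--Graham approximations explicitly, whereas the paper works directly with the stacky bases $\Bcal T$ and $X \times \Bcal T$ and only remarks afterwards that approximations by products of projective spaces justify this; this is a presentational choice rather than a mathematical one, and your version makes the implicit step visible.

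Where I want to push back is on the resolution of the $\Hom$--tensor commutation, which you correctly identify as the delicate point. The paper's own proof asserts the step
\[
\Hom_{A^\star_T}\bigl(A_\star[Z/T],\, A^\star X \otimes_\Z A_\star^T\bigr) = A^\star X \otimes_\Z \Hom_{A^\star_T}\bigl(A_\star[Z/T],\, A_\star^T\bigr)
\]
without comment, so flagging it is reasonable. However, invoking ``the Gorenstein (indeed Poincar\'e duality) structure of $A^\star B$'' does not supply the needed ingredient. Writing $R = A^\star B$, $M = A_\star W$, $N = A_\star B$, the natural map $A \otimes_\Z \Hom_R(M,N) \to \Hom_R(M, A \otimes_\Z N)$ fails to be an isomorphism in general because tensoring the four-term sequence $0 \to \Hom_R(M,N) \to N^b \to N^a \to C \to 0$ (obtained by dualising a finite presentation of $M$) with a non-flat $\Z$-module $A$ introduces $\Tor_1^\Z$ terms from the image and cokernel of $N^b \to N^a$. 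Gorenstein-ness of $R$ controls $\Ext_R$-vanishing, which identifies $C$ as a dual of the first syzygy of $M$, but says nothing about $\Z$-flatness of $C$ or of $\operatorname{im}(N^b \to N^a)$, which is what is actually needed. What \emph{would} make the commutation immediate is either $\Z$-flatness of $A^\star X$ (not assumed: smooth varieties can have torsion in their Chow groups), or freeness of $A_\star W$ over $A^\star B$ (equivalently of $A_\star^T Z$ over $\operatorname{Sym}M$), which is not among the results of \Cref{prop: presentation ordinary homology relative} and is not obvious for singular or non-complete $\Sigma$. If you want to rescue this step you should either establish the relevant freeness, or restructure the argument so that the commutation is never invoked; as written, this is a genuine gap, albeit one that the paper's own proof glosses over as well.
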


\begin{proof} This follows by combining the relative K\"unneth property and relative Kronecker duality. Recall that $[Z/T] \to \Bcal T$ is the universal toric variety bundle with fibre $Z$ (\Cref{sec: mixing collection and universal bundle}). Relative Kronecker duality (\Cref{thm: relative Kronecker}) gives
\[ A^\star[Z/T] = \Hom_{A^\star_T}(A_\star[Z/T], A_\star^T).\]
Pulling back along the projection $X \times \Bcal T \to \Bcal T$ we see that $X \times [Z/T]$ is a toric variety bundle over $X \times \Bcal T$, and relative Kronecker duality again gives:
\[ A^\star(X \times [Z/T]) = \Hom_{A^\star X \otimes_\Z A^\star_T}(A_\star(X \times [Z/T]),A_\star X \otimes_\Z A_\star^T).\]
Finally by the relative K\"unneth property (\Cref{thm: relative Kunneth}) we have
\begin{align*} A_\star(X \times [Z/T]) & \cong A_\star (X \times \Bcal T) \otimes_{A^\star_T} A_\star [Z/T] \\
& = A_\star X \otimes_\Z A^\star_T \otimes_{A^\star_T} A_\star[Z/T] \\
& = A_\star X \otimes_\Z A_\star[Z/T].
\end{align*}
Since $X$ is smooth there is a canonical isomorphism of $A^\star X$-modules $A_\star X \cong A^\star X$. We obtain
\begin{align*} A^\star(X \times [Z/T]) & = \Hom_{A^\star X \otimes_\Z A^\star_T} (A^\star X \otimes_\Z A_\star[Z/T], A^\star X \otimes_\Z A_\star^T) \\
& = A^\star X \otimes_\Z \Hom_{A^\star_T} (A_\star[Z/T],A_\star^T) \\
& = A^\star X \otimes_\Z A^\star[Z/T]
\end{align*}
as required.
\end{proof}

\begin{remark}
In the previous proof we have applied the relative K\"unneth property (\Cref{thm: relative Kunneth}) and relative Kronecker duality (\Cref{thm: relative Kronecker}) to toric variety bundles whose bases are smooth Artin stacks. This generalisation is straightforward in our case, as the bases we consider are global quotients: approximations in the sense of \cite{EdidinGrahamIntersection,Kresch} are given by products of high-dimensional projective spaces.
\end{remark}

\begin{proof}[Proof of Theorem~\ref{thm: piecewise polynomials}]
By \Cref{prop: quotient bundle by dense torus} we have
\[ A^\star_T Y = A^\star[Y/T] = A^\star(X \times [Z/T]) \]
and then \Cref{prop: Kunneth universal toric bundle} gives canonical isomorphisms
\[ A^\star(X \times [Z/T]) \cong A^\star X \otimes_\Z A^\star[Z/T] = A^\star X \otimes_\Z A^\star_T Z \cong A^\star X \otimes_\Z \operatorname{PP}^\star(\Sigma) \]
where the isomorphism $A^\star_T Z \cong \operatorname{PP}^\star(\Sigma)$ is given by \cite[Theorem~1]{PaynePP}.
\end{proof}

\subsection{From piecewise polynomials to Minkowski weights} \label{sec: PP to MW} For this section, we assume that the fibre fan $\Sigma$ is complete. Theorems~\ref{thm: piecewise polynomials} and \ref{thm: Minkowski weights} establish canonical isomorphisms of $A^\star X$-algebras:
\[ A^\star_T Y = \on{PP}^\star(\Sigma) \otimes A^\star X, \qquad A^\star Y = \on{MW}^\star(\Sigma,L,X). \]
There is a canonical map $A^\star_T Y \to A^\star Y$ which takes the non-equivariant limit of an equivariant class. Composing with the isomorphisms above, we obtain a map:
\begin{equation} \label{eqn: map PP to MW} \on{PP}^\star(\Sigma) \otimes A^\star X \to \on{MW}^\star(\Sigma,L,X).\end{equation}
We describe this map, following \cite{KatzPayne}. We begin by establishing background on equivariant multiplicities. Let $\on{Sym}^{\pm} M$ denote the localisation of $\on{Sym} M$ in the set of nonzero homogeneous elements. Given a maximal cone $\upsigma \in \Sigma^{\max}$ and a face $\uptau \subseteq \upsigma$, there is an \textbf{equivariant multiplicity}
\[ e_{\upsigma \uptau} \in \on{Sym}^{\pm} M \]
which is a homogeneous element of degree $-\! \on{codim}\uptau$. If the toric stratum $Z(\uptau) \subseteq Z$ is smooth, $e_{\upsigma \uptau}$ is the inverse of the equivariant Euler class of the tangent space to $Z(\uptau)$ at the fixed point corresponding to $\upsigma$. General equivariant multiplicities are defined and studied in \cite{Rossmann, BrionEquivariant, KatzPayne} and play a central role in localisation formulae.

\begin{definition}
Fix a homogeneous piecewise polynomial $f \in \on{PP}^k(\Sigma)$ and a cone $\uptau \in \Sigma$. We define the \textbf{residue sum} of $f$ with respect to $\uptau$ as follows:
\[ R_f(\uptau) \colonequals \sum_{\substack{\upsigma \in \Sigma^{\max} \\ \upsigma \supseteq \uptau}} f_\upsigma e_{\upsigma \uptau} \in \on{Sym}^{\pm} M. \]
\end{definition}
The residue sum $R_f(\uptau)$ is homogeneous of degree $k - \on{codim} \uptau$. In \cite{KatzPayne} residue sums are studied for cones with $\on{codim} \uptau=k$, in which case $R_f(\uptau)$ is homogeneous of degree zero. We are interested in more general residue sums. We note the following basic fact.

\begin{lemma} \label{lem: residue sum is a polynomial} The residue sum $R_f(\uptau)$ belongs to the subring $\on{Sym} M \subseteq \on{Sym}^\pm M$.
\end{lemma}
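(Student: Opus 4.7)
The plan is to recognise $R_f(\uptau)$ as an equivariant pushforward from the complete toric variety $Z(\uptau)$, and then invoke the polynomiality of equivariant pushforwards from proper varieties.

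Since $\Sigma$ is complete, the toric stratum $Z(\uptau) \subseteq Z$ is complete, with fan the star fan $\on{Star}(\uptau, \Sigma)$ in the lattice $N/N_\uptau$. Its $T$-fixed points are in bijection with the maximal cones $\upsigma \in \Sigma^{\max}$ containing $\uptau$; write $x_\upsigma \in Z(\uptau)$ for the fixed point corresponding to $\upsigma$. Under the Payne isomorphism $A^\star_T Z \cong \on{PP}^\star(\Sigma)$ of \cite{PaynePP}, the restriction of $f$ to $x_\upsigma$ is precisely the polynomial piece $f_\upsigma \in \on{Sym} M$, while the equivariant multiplicity of $Z(\uptau)$ at $x_\upsigma$ is $e_{\upsigma\uptau}$ by the definition of the latter in \cite{Rossmann, BrionEquivariant, KatzPayne}.

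Applying the equivariant localization formula of \cite{BrionEquivariant} to the structure morphism $p_\uptau \colon Z(\uptau) \to \on{Spec}\kfield$ then gives
\[ p_{\uptau \star}\bigl(f|_{Z(\uptau)} \cap [Z(\uptau)]\bigr) \; = \; \sum_{\substack{\upsigma \in \Sigma^{\max} \\ \upsigma \supseteq \uptau}} f_\upsigma \, e_{\upsigma \uptau} \; = \; R_f(\uptau), \]
an equality which holds \emph{a priori} in $\on{Sym}^\pm M$. Since $Z(\uptau)$ is proper, however, the equivariant pushforward $p_{\uptau \star}$ takes values in the equivariant Chow homology of $\on{Spec}\kfield$, which is the polynomial ring $\on{Sym} M$ itself and not merely its localization. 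This yields $R_f(\uptau) \in \on{Sym} M$ as required. The main (and essentially cosmetic) obstacle is the convention-matching: confirming that Payne's isomorphism sends restriction to $x_\upsigma$ to the piece $f_\upsigma$, and that the equivariant multiplicity of $Z(\uptau)$ at $x_\upsigma$ entering Brion's formula coincides with $e_{\upsigma\uptau}$. Both are immediate from the definitions.
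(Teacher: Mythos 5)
Your proof is correct and follows essentially the same route as the paper: both recognise $R_f(\uptau)$ as the equivariant pushforward of $f$ along $Z(\uptau) \to \operatorname{Spec}\kfield$ via the localisation theorem, and conclude polynomiality from properness of $Z(\uptau)$. The paper cites \cite[Theorem~2]{EdidinGrahamLocalisation} and \cite[Theorem~1.4]{KatzPayne} for the localisation step, but the argument is the same.
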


\begin{proof} This follows immediately from the localisation theorem applied to the action $T \curvearrowright Z(\uptau)$. Recall that $f \in \on{PP}^\star(\Sigma) = A^\star_T Z$ and that $\on{Sym} M = A^\star_T$. By the localisation theorem, the residue sum $R_f(\uptau)$ coincides with the $T$-equivariant integral of $f$ over the toric stratum $Z(\uptau) \subseteq Z$. Consequently it must lie in $A^\star_T = \on{Sym} M$. See \cite[Theorem~2]{EdidinGrahamLocalisation} for the localisation theorem in equivariant Chow, and \cite[Theorem~1.4]{KatzPayne} for its extension to singular toric varieties.	
\end{proof}

Finally, recall that $\updelta \colon M \to A^1 (X)$ is obtained by taking the first Chern class of the mixing collection $L \colon M \to \Pic X$. This extends multiplicatively to a map:
\[ \updelta \colon \on{Sym} M \to A^\star X.\]
We now describe \eqref{eqn: map PP to MW}. Since this map is $A^\star X$-linear, it suffices to describe it on the subring $\on{PP}^\star(\Sigma) \otimes \mathbbm{1}_X \subseteq \on{PP}^\star(\Sigma) \otimes A^\star X$.

\begin{theorem} \label{prop: PP to MW} Fix a homogeneous piecewise polynomial $f \in \on{PP}^k(\Sigma)$. Then the Minkowski weight $W_f$ associated to $f \otimes \mathbbm{1}_X$ under \eqref{eqn: map PP to MW} is given by
\begin{equation} \label{eqn: PP to MW formula} W_f(\uptau) = \updelta(R_f(\uptau)) \cap [X]\end{equation}
for every $\uptau \in \Sigma$.
\end{theorem}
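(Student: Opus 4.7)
The strategy is to lift the computation to equivariant Chow and then apply an equivariant localization formula on the toric subbundle $Y(\uptau) \to X$, taking the non-equivariant limit at the end.

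First, set $F = f \otimes \BBone_X \in A^\star_T Y$, so that its non-equivariant limit $\bar F \in A^\star Y$ corresponds to $W_f$ under \Cref{thm: Minkowski weights}. By that theorem we have $W_f(\uptau) = p_\star(\bar F \cap [Y(\uptau)])$, and since the non-equivariant limit commutes with cap product and proper pushforward, this equals the non-equivariant limit of
\[ p_\star(F \cap [Y(\uptau)]^T) \in A^T_\star X, \]
where $[Y(\uptau)]^T$ is the equivariant class appearing in \Cref{prop: presentation equivariant homology relative}.

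Second, apply equivariant localization to the toric subbundle $p_\uptau \colon Y(\uptau) \to X$. The fixed locus of $T \curvearrowright Y(\uptau)$ is $\bigsqcup_{\upsigma \supseteq \uptau,\, \upsigma \in \Sigma^{\max}} Y(\upsigma)$, each component isomorphic to $X$ via $p_\upsigma$. The equivariant normal bundle of the toric subbundle $Y(\upsigma) \hookrightarrow Y(\uptau)$ splits into line bundles indexed by the new edges $\rho \in \upsigma \setminus \uptau$; each has torus weight $\lambda_\rho \in M$ and first Chern class $\updelta(\lambda_\rho) \in A^1 X$ arising from the mixing collection, so its equivariant first Chern class is $\lambda_\rho + \updelta(\lambda_\rho) \in A^1_T X$. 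Applying equivariant localization in this relative setting (extending \cite{EdidinGrahamLocalisation, KatzPayne}) produces, in the localization $A^T_\star X \otimes_{A^\star_T} \op{Frac}(A^\star_T)$,
\[ p_\star(F \cap [Y(\uptau)]^T) = \sum_{\substack{\upsigma \in \Sigma^{\max} \\ \upsigma \supseteq \uptau}} \tilde f_\upsigma \cdot \tilde e_{\upsigma\uptau} \cap [X]^T, \]
where $\tilde f_\upsigma$ and $\tilde e_{\upsigma\uptau}$ are obtained from $f_\upsigma$ and $e_{\upsigma\uptau}$ by the substitution $\lambda \mapsto \lambda + \updelta(\lambda)$. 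Integrality of the left-hand side forces the sum to lie in $A^T_\star X$ itself.

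Third, apply the non-equivariant limit $A^T_\star X \to A_\star X$, which annihilates equivariant weights and fixes base classes. By multiplicativity of $\updelta$, setting $\lambda = 0$ in $\tilde f_\upsigma$ and $\tilde e_{\upsigma\uptau}$ produces $\updelta(f_\upsigma)$ and $\updelta(e_{\upsigma\uptau})$ respectively. Summing and using the preceding lemma that $R_f(\uptau) \in \op{Sym} M$ (so $\updelta$ is genuinely applicable), we conclude
\[ W_f(\uptau) = \sum_\upsigma \updelta(f_\upsigma \cdot e_{\upsigma\uptau}) \cap [X] = \updelta(R_f(\uptau)) \cap [X]. \]

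The main obstacle is the second step: establishing the relative equivariant localization formula, in particular identifying each normal line bundle's equivariant first Chern class as $\lambda_\rho + \updelta(\lambda_\rho)$ rather than just $\lambda_\rho$. This extra $\updelta$-term encodes the twisting by the mixing collection and is precisely the novelty of the relative theory with respect to the absolute setting of \cite{KatzPayne}. Once this identification is made, the remaining calculation is a straightforward substitution and summation.
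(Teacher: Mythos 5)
Your strategy is a genuinely different route from the paper's. You attempt a \emph{direct} equivariant localisation on the bundle $p_\uptau \colon Y(\uptau) \to X$, identifying the fixed loci $Y(\upsigma) \cong X$ and the twisted equivariant Euler classes of their normal bundles. The paper instead factors through the universal bundle: using $[Y(\uptau)/T] = X \times [Z(\uptau)/T]$ and base change around the cartesian square
\[
\begin{tikzcd}
Y \ar[r] \ar[d,"p"'] & {[Z/T]} \ar[d,"q"] \\ X \ar[r,"L"'] & \Bcal T,
\end{tikzcd}
\]
it reduces the computation to the \emph{absolute} localisation theorem on $[Z(\uptau)/T] \to \Bcal T$ (which is exactly \cite[Theorem~1.4]{KatzPayne}), obtaining $q_\star(f \cap k_\star[Z(\uptau)/T]) = R_f(\uptau) \cap [\Bcal T]$, and only afterwards applies $L^\star$, which is precisely $\updelta$. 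The $\updelta$-twist thus enters in one clean step rather than being distributed through the Euler classes.

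The gap in your argument is the second step, as you yourself flag, but the gap is larger than you acknowledge. You invoke a ``relative equivariant localisation formula'' on $Y(\uptau) \to X$ and assert that the normal line bundles have equivariant first Chern class $\lambda_\rho + \updelta(\lambda_\rho)$; in the smooth case this is plausible, but \emph{the fibre fan $\Sigma$ is allowed to be singular throughout this section}, so the equivariant multiplicities $e_{\upsigma\uptau}$ are not simply reciprocals of Euler classes. In the absolute case one needs resolution of singularities and the delicate integrality analysis of Katz--Payne to even define and manipulate these, and you would need a relative analogue of their Theorem~1.4. None of this is established. Moreover, the identification $F|_{Y(\upsigma)} = \tilde f_\upsigma$ is subtle: under the iso $[Y(\upsigma)/T] = X \times [Z(\upsigma)/T]$ of \Cref{prop: quotient bundle by dense torus} the restriction is $f_\upsigma \otimes \BBone_X$, and the substitution $\lambda \mapsto \lambda + \updelta(\lambda)$ arises precisely because this iso is \emph{twisted by $L$} relative to the trivial identification $[Y(\upsigma)/T] = Y(\upsigma) \times \Bcal T$ — a point that requires an explicit argument. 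Finally, ``setting $\lambda = 0$ in $\tilde f_\upsigma$ and $\tilde e_{\upsigma\uptau}$'' is meaningless term-by-term (the $\tilde e_{\upsigma\uptau}$ are genuine rational functions); one must first assemble the sum into $\widetilde{R_f(\uptau)} \in A^\star_T X$, a polynomial by the preceding lemma, and \emph{then} take the non-equivariant limit. So while your answer is correct and your geometric picture of where the twist enters is insightful, your proof as written requires a nontrivial lemma (relative localisation with singular fibres) that the paper deliberately avoids by pulling everything back from the untwisted universal bundle.
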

\begin{proof}[Proof of \Cref{prop: PP to MW}]
Given a cone $\uptau \in \Sigma$ consider the following diagram, in which the horizontal equalities are established in \Cref{prop: quotient bundle by dense torus}:
\[
\begin{tikzcd}
Y(\uptau) \ar[r] \ar[d,hook,"i"] \ar[rd,phantom,"\square"] & {[Y(\uptau)/T]} \ar[d,hook,"j"] \ar[r,equal] & X \times {[Z(\uptau)/T]} \ar[d,hook,"j"] \ar[r] \ar[rd,phantom,"\square"] & {[Z(\uptau)/T]} \ar[d,hook,"k"] \\
Y \ar[r,"u"] \ar[d,"p"] \ar[rrd,phantom,"\square"] & {[Y/T]} \ar[r,equal] & X \times {[Z/T]} \ar[d] \ar[r,"v"] \ar[rd,phantom,"\square"] & {[Z/T]} \ar[d,"q"] \\
X \ar[rr] \ar[rrr,bend right=20pt,"L"] & & X \times \Bcal T \ar[r] & \Bcal T.
\end{tikzcd}
\]
We have $A^\star[Z/T] = A^\star_T Z = \on{PP}^\star(\Sigma)$ and $A^\star [Y/T] = A^\star_T Y = \on{PP}^\star(\Sigma) \otimes A^\star X$. The subring $\on{PP}^\star (\Sigma) \otimes \mathbbm{1}_X \subseteq \on{PP}^\star(\Sigma) \otimes A^\star X$ is precisely the image of the pullback along $v$:
\[ v^\star A^\star[Z/T] \subseteq A^\star [Y/T].\]
We are given a piecewise polynomial $f \in \on{PP}^\star(\Sigma) = A^\star[Z/T]$ whose non-equivariant limit is $u^\star v^\star f \in A^\star Y$. The value of the corresponding Minkowski weight on a cone $\uptau \in \Sigma$ is given by Theorem~\ref{thm: Minkowski weights} as:
\[
W_f(\uptau) = p_\star \left( u^\star v^\star f \cap i_\star [Y(\uptau)] \right) = p_\star u^\star v^\star \left( f \cap k_\star [Z(\uptau)/T] \right) = L^\star q_\star \left( f \cap k_\star[Z(\uptau)/T] \right).
\]
The localisation theorem in equivariant Chow (see \cite[Theorem~2]{EdidinGrahamLocalisation} and \cite[Theorem~1.4]{KatzPayne}) applied to the action $T \curvearrowright Z(\uptau)$ gives the following formula
\[ q_\star \left(f \cap k_\star [Z(\uptau)/T] \right) = R_f(\uptau) \cap [\Bcal T] \]
from which we conclude:
\[
W_f(\uptau) = L^\star \left( R_f(\uptau) \cap [\Bcal T] \right) = \updelta (R_f(\uptau)) \cap [X].\qedhere
\]
\end{proof}

\begin{example} \label{ex: MW to PP} We continue Examples~\ref{ex: MW} and \ref{ex: MW product}. Recall the labelling of the cones of $\Sigma$:	
\[
\begin{tikzpicture}
	\draw(0,0) [fill=black] circle[radius=2pt];
	\draw[->] (0,0) -- (2,0);
	\draw (2,0) node[right]{$\uptau_1$};
	\draw[->] (0,0) -- (1.4,1.4);
	\draw (1.6,1.6) node{$\uptau_2$};
	\draw[->] (0,0) -- (0,2);
	\draw (0,2) node[above]{$\uptau_3$};
	\draw[->] (0,0) -- (-1.4,-1.4);
	\draw (-1.55,-1.55) node{$\uptau_4$};
	\draw (1.3,0.5) node{$\upsigma_{12}$};
	\draw (0.5,1.3) node{$\upsigma_{23}$};
	\draw (-1.2,0.5) node{$\upsigma_{34}$};
	\draw (0.5,-1) node{$\upsigma_{41}$};
\end{tikzpicture}
\]
Let $x_1,x_2$ denote the standard coordinates on $N=\Z^2$ and consider the following piecewise polynomial $f \in \operatorname{PP}^2(\Sigma)$:
\[
\begin{tikzpicture}
	\draw(0,0) [fill=black] circle[radius=2pt];
	\draw[->] (0,0) -- (2,0);
	\draw[->] (0,0) -- (1.4,1.4);
	\draw[->] (0,0) -- (0,2);
	\draw[->] (0,0) -- (-1.4,-1.4);
	\draw (1.3,0.5) node[blue]{$x_2^2$};
	\draw (0.5,1.3) node[blue]{$x_1^2$};
	\draw (-1.2,0.5) node[blue]{$0$};
	\draw (0.5,-1) node[blue]{$0$};
\end{tikzpicture}
\]
We compute the associated Minkowski weight $W_f$. The relevant maximal cones are $\upsigma_{12}$ and $\upsigma_{23}$ and the relevant equivariant multiplicities are:
\begin{alignat*}{5}
& e_{\upsigma_{12}0} = \dfrac{1}{(x_1-x_2)x_2}, \qquad && e_{\upsigma_{12} \uptau_1} = \dfrac{1}{x_2}, \qquad && e_{\upsigma_{12}\uptau_2} = \dfrac{1}{x_1-x_2},\\[0.2cm]
& e_{\upsigma_{23}0} = \dfrac{1}{x_1(x_2-x_1)}, \qquad && e_{\upsigma_{23}\uptau_2} = \dfrac{1}{x_2-x_1}, \qquad && e_{\upsigma_{23}\uptau_3} = \dfrac{1}{x_1}.
\end{alignat*}
We use these to compute the residue sums. For the origin we have:
\[ R_f(0) = \dfrac{x_2^2}{(x_1-x_2)x_2} + \dfrac{x_1^2}{x_1(x_2-x_1)} = - \mathbbm{1}. \]
For the rays we have:
\begin{alignat*}{3} & R_f(\uptau_1) = \dfrac{x_2^2}{x_2} = x_2 \ && \Rightarrow \ W_f(\uptau_1) = \updelta(x_2) = \upalpha_2,\\[0.2cm]
& R_f(\uptau_2) = \dfrac{x_2^2}{x_1-x_2} + \dfrac{x_1^2}{x_2-x_1} = -x_1-x_2 \ && \Rightarrow \ W_f(\uptau_2) = \updelta(-x_1-x_2) = -\upalpha_1-\upalpha_2, \\[0.2cm]
& R_f(\uptau_3) = \dfrac{x_1^2}{x_1} = x_1 \ && \Rightarrow \ W_f(\uptau_3) = \updelta(x_1) = \upalpha_1.
\end{alignat*}
Finally for the maximal cones we have:
\begin{align*} R_f(\upsigma_{12}) = x_2^2 \ \Rightarrow \ W_f(\upsigma_{12}) = \updelta(x_2^2) = \upalpha_2^2, \\
R_f(\upsigma_{23}) = x_1^2 \ \Rightarrow \ W_f(\upsigma_{23}) = \updelta(x_1^2) = \upalpha_1^2.	
\end{align*}
Assembling the above classes, the Minkowski weight $W_f$ is given by:
\[
\begin{tikzpicture}
	\draw(0,0) [fill=black] circle[radius=2pt];
	\draw(0,0) node[blue,below]{$-\!\mathbbm{1}$};
	\draw[->] (0,0) -- (2,0);
	\draw (2,0) node[right,blue]{$\upalpha_2$};
	\draw[->] (0,0) -- (1.4,1.4);
	\draw (1.85,1.6) node[blue]{$-\!\upalpha_1\!-\!\upalpha_2$};
	\draw[->] (0,0) -- (0,2);
	\draw (0,2) node[above,blue]{$\upalpha_1$};
	\draw[->] (0,0) -- (-1.4,-1.4);
	\draw (-1.55,-1.55) node[blue]{$0$};
	\draw (1.3,0.5) node[blue]{$\upalpha_2^2$};
	\draw (0.5,1.3) node[blue]{$\upalpha_1^2$};
	\draw (-1.2,0.5) node[blue]{$0$};
	\draw (0.5,-1) node[blue]{$0$};
\end{tikzpicture}
\]
The arguments in \Cref{ex: MW} can be used to show that this is the Minkowski weight Poincar\'e dual to $D_2^2$.
\end{example}

\footnotesize
\bibliographystyle{alpha}
\bibliography{Bibliography.bib}\medskip

\noindent Francesca Carocci, University of Rome Tor Vergata, Italy. \href{mailto:carocci@mat.uniroma2.it}{carocci@mat.uniroma2.it} \\
\noindent Leonid Monin, EPFL, Switzerland. \href{mailto:leonid.monin@epfl.ch}{leonid.monin@epfl.ch} \\
\noindent Navid Nabijou, Queen Mary University of London, UK. \href{mailto:n.nabijou@qmul.ac.uk}{n.nabijou@qmul.ac.uk}

\end{document}